\patchcmd{\ps@pprintTitle}{\footnotesize\itshape
       Preprint submitted to \ifx\@journal\@empty Elsevier
       \else\@journal\fi\hfill\today}{\relax}{}{}
\newtheorem{theorem}{Theorem}[section]
\newtheorem{lemma}[theorem]{Lemma}
\newtheorem{remark}{Remark}[section]
\newtheorem{definition}{Definition}[section]
\DeclareMathOperator*{\argmin}{argmin}
\DeclarePairedDelimiter{\ceil}{\lceil}{\rceil}
\newcommand{\alec}[1]{\textcolor{black}{#1}}
\begin{document}

\begin{frontmatter}

\title{Deterministic matrix sketches for low-rank \alec{compression} of high-dimensional simulation data}

\author[colorado1]{Alec M. Dunton}
\author[colorado2]{Alireza Doostan\corref{cor1}}
\cortext[cor1]{Corresponding author, alireza.doostan@colorado.edu}
\address[colorado1]{Applied Mathematics, University of Colorado, Boulder, CO 80309, USA}
\address[colorado2]{Smead Aerospace Engineering Sciences, University of Colorado, Boulder, CO 80309, USA}

\begin{abstract}
Matrices arising in scientific applications frequently admit linear low-rank approximations due to smoothness in the physical and/or temporal domain of the problem. In large-scale problems, computing an optimal low-rank approximation can be prohibitively expensive. Matrix sketching addresses this by reducing the input matrix to a smaller, but representative matrix via a low-dimensional linear {\it embedding}. If the sketch matrix produced by the embedding sufficiently captures the geometric properties of the original matrix, then a near-optimal approximation may be obtained. Much of the work done in matrix sketching has centered on random projection.~\alec{Alternatively,} in this work, {\it deterministic} matrix sketches which generate coarse representations -- compatible with the corresponding PDE solve -- are \alec{considered in the computation of the singular value decomposition and matrix interpolative decomposition}. \alec{The deterministic sketching approaches in this work have many advantages over randomized sketches. Broadly, randomized sketches are data-agnostic, whereas the proposed sketching methods exploit structures within data generated in complex PDE systems. These deterministic sketches are often faster, require access to a small fraction of the input matrix, and do not need to be explicitly constructed.} A novel single-pass\alec{, i.e., requiring one read over the input,} power iteration algorithm is also presented. The power iteration method is particularly effective \alec{in improving low-rank approximations} when the singular value decay of data is slow. Finally, theoretical error bounds and estimates, as well as numerical results across three application problems, are provided.
\end{abstract}

\begin{keyword}
matrix sketch; single-pass; low-rank approximation; singular value decomposition; interpolative decomposition; power iteration
\end{keyword}

\end{frontmatter}

\nolinenumbers
\section{Introduction}	
\label{sec:intro}
In complex systems modeled by partial differential equations (PDEs), data generated from the numerical solution of the given equation(s) \alec{is often} written to file in 2D array (matrix) format, with each entry corresponding to a quantity of interest (QoI) measured at a given grid point, for a given input parameter, and/or at a given time-step (see Figure~\ref{fig:A_schematic}). If this array exhibits spatial or temporal smoothness, e.g., in diffusion-dominated problems solved on a Cartesian grid, it may admit an accurate low-rank approximation. In other words, the matrix may be approximated as a product of smaller {\it factor matrices} whose dimensions reflect the extrinsic dimensionality of the \alec{data.} 

Applications of low-rank approximations are ubiquitous in computational mathematics. 
When matrix-vector products must be evaluated many times, for example, in Krylov methods, low-rank approximations enable significant speedup via reduction in computational complexity. In data visualization, low-rank approximations yield useful representations of high-dimensional data by producing embeddings in two or three dimensions.
In lossy data compression, the factor matrices whose product approximates the original matrix require far fewer bytes in storage than the original \alec{matrix.}

\begin{figure}[t]

  \centering
  \includegraphics[width=0.49\textwidth]{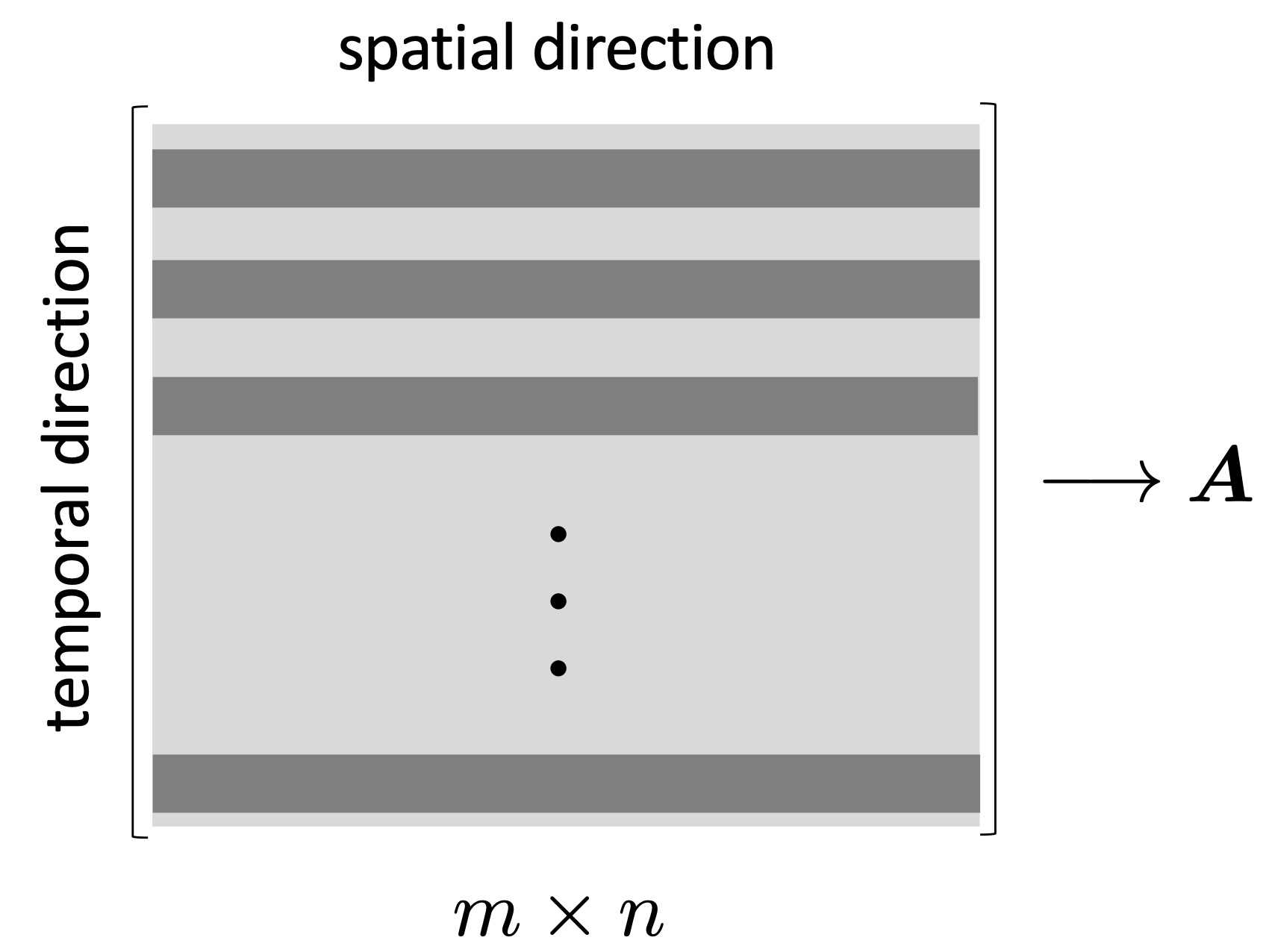}
  \caption{Schematic of a data matrix $\bm A$ with $m$ time solutions (rows) and $n$ spatial degrees of freedom~\cite{dunton2020pass}. }	\label{fig:A_schematic}
\end{figure}


Low-rank matrix approximation can be formulated as the following minimization problem. Let $\bm{A}_k$ be a least-squares optimal rank-$k$ approximation of a matrix $\bm{A}$. Then,
\begin{equation}
    \bm{A}_k = \argmin_{\bm{X}} \{ \Vert \bm{A} - \bm{X} \Vert_{\xi} : \text{rank}(\bm{X}) = k \},
\end{equation}
where $\xi = 2,F$; $\Vert \cdot \Vert_2$ is the matrix 2-norm, while $\Vert \cdot \Vert_F$ is the matrix Frobenius norm. 
By the Eckart-Young theorem~\cite{eckart1936approximation}, $\bm{A}_k$ can be obtained by computing the {\it singular value decomposition} (SVD) of the matrix $\bm{A}$ and storing the $k$ largest singular values and $k$ corresponding left and right singular vectors. This optimal rank-$k$ approximation then satisfies 
\begin{equation}
    \Vert \bm{A} - \bm{A}_k \Vert_2 = \sigma_{k+1}.
\end{equation}

Due to the potential impracticality of computing an optimal rank-$k$ approximation of a matrix, a near-optimal solution is often acceptable.
To this end, approximate rank-$k$ SVD methods are proposed. This takes the form of
\begin{equation}
    \hat{\bm{A}}_k = \tilde{\bm{U}}_k\tilde{\bm{S}}_k\tilde{\bm{V}}_k^T, 
\end{equation}
where $\hat{\bm{A}}_k \in \mathbb{R}^{m \times n}$ is the approximation to $\bm{A}$, $\tilde{\bm{U}}_k \in \mathbb{R}^{m \times k}$ is a matrix whose columns comprise approximations to the first $k$ left singular vectors of $\bm{A}$, $\tilde{\bm{S}}_k \in \mathbb{R}^{k \times k}$ is a diagonal matrix whose entries are approximations to the $k$ largest singular values of $\bm{A}$, and $\tilde{\bm{V}}_k \in \mathbb{R}^{n \times k}$ is a matrix whose columns comprise approximations to the first $k$ right singular vectors of $\bm{A}$. \alec{The factor matrices are approximations to the true SVD factor matrices because the full SVD of the matrix is never computed in this approach.}

\alec{The SVD is an example of low-rank approximation which identifies orthogonal bases for the fundamental subspaces of a matrix. Another class of methods, {\it self-expressive} decompositions, approximate a matrix using, e.g., a row skeleton, column skeleton, or sub-matrix of the input. To this end,}
methods for computing low-rank approximations of $\bm{A}$ via the row interpolative decomposition~\cite{cheng2005compression} (ID) are provided. An ID takes the form
\begin{equation}
    \hat{\bm{A}}_k = \bm{P}\bm{A}(\mathcal{I},:),
\end{equation}
where $\bm{A}(\mathcal{I},:) \in \mathbb{R}^{k \times n}$ is comprised of a subset of the rows of $\bm{A}$ indexed by \alec{$\mathcal{I} \subseteq \lbrack 1, \dots, n \rbrack$}, and $\bm{P} \in \mathbb{R}^{m \times k}$ is a matrix which approximately maps $\bm{A}(\mathcal{I},:)$ to $\bm{A}$.

Although the factor matrices comprising a low-rank approximation of a matrix require far less disk storage than the original matrix, constructing such factor matrices can itself require prohibitively large amounts of working memory (RAM). When the size of a data matrix $\bm{A} \in \mathbb{R}^{m \times n}$ exceeds the system's RAM, instead of storing the entire matrix to memory to then compute a decomposition, a {\it sketch} of the matrix, $\bm{AD} \in \mathbb{R}^{m \times n_c}$, reduced in dimension via right multiplication by a matrix $\bm{D} \in \mathbb{R}^{n \times n_c}$ with $n_c \ll n$, may be stored instead. In an online setting where the snapshots of PDE data arrive in a streaming fashion, sketch matrices may be constructed in a single-pass.

Matrix sketching was, to the best of the authors' knowledge, first used to compute low-rank approximations for latent semantic indexing~\cite{papadimitriou2000latent}. In that work and since, much of the research in matrix sketching for numerical linear algebra has focused on {\it randomized} methods, wherein a data matrix is projected to a lower dimension via linear random embedding. This has taken the form of sparsification techniques, which \alec{yield a sparse output matrix via} \alec{sub-sampling} the entries of a matrix according to a \alec{probability} distribution defined by the matrix entries~\cite{achlioptas2007fast,arora2006fast,gittens2009error,spielman2011graph}. Another popular class of methods is random projection, where a matrix sketch is typically computed by multiplying the data matrix $\bm{A}$ by a matrix with random entries. Examples of random projections include, but are not limited to, the sub-sampled randomized Hadamard transform~\cite{woolfe2008fast,tropp2011improved,gu2015subspace}, sub-sampled random Fourier transform~\cite{liberty2007randomized,woolfe2008fast}, Gaussian matrices~\cite{dasgupta2003elementary,halko2011finding,gu2015subspace}, and the fast Johnson-Lindenstrauss transform (FJLT)~\cite{ailon2009fast}.

\alec{Randomized sketching is used throughout numerical linear algebra, reducing runtimes while maintaining acceptable accuracy in applications such as clustering~\cite{ailon2009fast}, classification~\cite{cannings2015random}, and low-rank approximation~\cite{halko2011finding}. A key result providing conditions under which randomized sketches perform well is the {\it Johnson-Lindenstrauss Lemma}~\cite{johnson1984extensions}. One version of the lemma from~\cite{dasgupta2003elementary} is provided below.
\begin{lemma} Given $0<\epsilon<1$, a set X of m points in $\mathbb{R}^N$, and a positive integer $n > 4(\epsilon^2/2 - \epsilon^3/3)^{-1} \log(m)  \epsilon^2$, there is a linear map f : $\mathbb{R}^N \rightarrow \mathbb{R}^n$ such that
\begin{equation}
    \left(1 - \epsilon\right) \Vert \bm{u} - \bm{v} \Vert_2 \leq \Vert f(\bm{u}) - f(\bm{v}) \Vert_2 \leq \left(1 + \epsilon \right) \Vert \bm{u} - \bm{v} \Vert_2,
\end{equation}
for all $\bm{u},\bm{v} \in X$.
\end{lemma}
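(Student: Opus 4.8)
The plan is to exhibit $f$ via the \emph{probabilistic method}: rather than constructing a single deterministic embedding, I would draw a linear map at random and show that it satisfies the two-sided distortion bound simultaneously for all pairs with positive probability, so that at least one admissible map must exist. Concretely, take $f(\bm{x}) = \tfrac{1}{\sqrt{n}}\bm{G}\bm{x}$ with $\bm{G} \in \mathbb{R}^{n \times N}$ having independent standard Gaussian entries (equivalently, a scaled orthogonal projection onto a uniformly random $n$-dimensional subspace, as in~\cite{dasgupta2003elementary}). By linearity $f(\bm{u}) - f(\bm{v}) = f(\bm{u}-\bm{v})$, so the pairwise claim reduces to controlling $\Vert f(\bm{w}) \Vert_2$ for the fixed difference vectors $\bm{w} = \bm{u}-\bm{v}$; after rescaling it suffices to treat a unit vector $\bm{w}$.

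The crux is a single-vector concentration estimate. For fixed unit $\bm{w}$, the quantity $\Vert f(\bm{w}) \Vert_2^2 = \tfrac{1}{n}\sum_{i=1}^{n}(\bm{G}_i \cdot \bm{w})^2$ is $\tfrac{1}{n}$ times a $\chi^2_n$ random variable, with expectation exactly $1$. I would bound its upper and lower tails separately by the Chernoff method: introduce a free parameter $t$, insert the moment generating function $(1-2t)^{-n/2}$ of $\chi^2_n$, and optimize over $t$. Using $\ln(1+\epsilon) \ge \epsilon - \epsilon^2/2$ (and its lower-tail analogue) to simplify the resulting exponent yields tail bounds of the form
\begin{equation}
    \Pr\!\left[\Vert f(\bm{w}) \Vert_2^2 \ge 1+\epsilon\right] \le \exp\!\left(-\tfrac{n}{2}\left(\tfrac{\epsilon^2}{2} - \tfrac{\epsilon^3}{3}\right)\right),
\end{equation}
together with the matching estimate for $\Pr[\Vert f(\bm{w}) \Vert_2^2 \le 1-\epsilon]$. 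Since $0<\epsilon<1$, controlling the squared norm in $[1-\epsilon, 1+\epsilon]$ forces $\Vert f(\bm{w}) \Vert_2 \in [1-\epsilon, 1+\epsilon]$, so each fixed pair violates the distortion inequality with probability at most $2\exp(-\tfrac{n}{2}(\epsilon^2/2 - \epsilon^3/3))$.

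I would close with a union bound over the $\binom{m}{2}$ distinct pairs $\{\bm{u},\bm{v}\} \subseteq X$: the probability that \emph{some} pair is overly distorted is at most $2\binom{m}{2}\exp(-\tfrac{n}{2}(\epsilon^2/2 - \epsilon^3/3)) < m^2\exp(-\tfrac{n}{2}(\epsilon^2/2 - \epsilon^3/3))$. Demanding that this be strictly below $1$ is exactly what forces the stated lower bound $n > 4(\epsilon^2/2 - \epsilon^3/3)^{-1}\log m$ on the target dimension; whenever it holds, a realization of $\bm{G}$ with no bad pair exists and its associated map is the desired $f$.

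The main obstacle is the concentration estimate of the second paragraph: the $\chi^2_n$ tail must be made sharp enough that the exponent reproduces the exact factor $\epsilon^2/2 - \epsilon^3/3$, since any looser bookkeeping in the Chernoff optimization would weaken the constant and invalidate the dimension requirement. By contrast, the reduction by linearity and the final union bound are routine.
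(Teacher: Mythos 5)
The paper gives no proof of this lemma---it is quoted directly from the cited reference~\cite{dasgupta2003elementary}---and your proposal is precisely that reference's argument: a random Gaussian (or random-subspace) projection, Chernoff bounds on the $\chi^2_n$ tails producing the exponent $\tfrac{n}{2}(\epsilon^2/2-\epsilon^3/3)$, and a union bound over the $\binom{m}{2}$ pairs that forces $n > 4(\epsilon^2/2-\epsilon^3/3)^{-1}\log m$; this is correct and essentially the same approach as the source. The only nit is that for the upper tail you need the upper estimate $\ln(1+\epsilon)\le \epsilon-\epsilon^2/2+\epsilon^3/3$ rather than the lower bound you quoted, but the tail inequality you state is the right one and the rest goes through.
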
}

\alec{A consequence of this lemma is that a sketch matrix respresenting the linear map $f$ can be drawn from a suitable probability distribution such that the embedding it yields preserves the geometry of the data, e.g., pairwise distances and inner products between rows, with high probability.
Geometry preservation in low-dimensional embeddings is a desirable property among matrix sketches, and informs the development of grid-compatible deterministic sketches for PDE data. In this work, probabilistic guarantees on the performance of the sketches are not provided. The analysis relies instead on the existence of a deterministic linear mapping between the coarse grid and fine grid data.}

Other approaches for matrix sketching include column subset selection (CSS) methods. The goal of a CSS scheme is to obtain a subset of $k$ columns $\bm{C}$ over all subsets of size $k$ of the columns of $\bm{A}$ such that 

\begin{equation}
\label{eq:cssp}
    \bm{C} = \argmin_{\bm{X}} \Vert \bm{A} - \bm{X}\bm{X}^+\bm{A} \Vert ,
\end{equation}
where $\bm{X}^+$ is the Moore-Penrose pseudoinverse of $\bm{X}$.
Finding the optimal solution to the problem (\ref{eq:cssp}) is NP-complete~\cite{shitov2017column}. Therefore, in many cases an approximate solution to the problem is acceptable. Randomized \alec{row/column} sampling methods for low-rank matrix approximation were pioneered by Frieze, Kannan, and Vempala~\cite{frieze2004fast}, and later improved upon in~\cite{drineas2004clustering,drineas2006fast}. Deterministic schemes for matrix sketching, namely frequent directions, have been developed in recent years~\cite{liberty2013simple,ghashami2016frequent}. The version of frequent directions proposed in~\cite{liberty2013simple} is noteworthy; this method can be implemented in a streaming fashion.

When loading the data matrix into RAM is cost-prohibitive, minimizing the number the number of {\it passes} an algorithm makes over the input matrix is of utmost importance. If this is not taken into consideration, the cost of reading the matrix itself becomes a computational bottleneck. To address such concerns, algorithms which are {\it pass-efficient}, i.e., algorithms which require minimal passes over their input, are prioritized. The methods presented in this work are exclusively single-pass and two-pass algorithms.

The development of single-pass algorithms for the decomposition of matrices can be traced back to work by Clarkson and Woodruff~\cite{clarkson2009numerical}. Another early example of a single-pass algorithm for computing an approximate singular value decomposition is found in Section 5.5 of the review paper by Halko et al.~\cite{halko2011finding}. Their proposal comes with the caveat of relying on an ill-conditioned least-squares problem. Single-pass algorithms for low-rank matrix approximation have since been developed in~\cite{,liberty2013simple,woodruff2014sketching,boutsidis2016optimal,upadhyay2016fast,tropp2017practical,yu2017single,dunton2020pass}.

\subsection{Contribution of this work}
\alec{Randomized sketches have enabled dramatic speedup and performance gains in low-rank approximation methods. A great benefit of these approaches is that they are {\it data-agnostic}. However, in the case of PDE simulation data, there is typically some knowledge of the smoothness or underlying spatial domain. Exploiting this knowledge of the PDE simulations from which the data is generated, the deterministic sketches presented in this work {\it are often faster than randomized sketches}, {\it require access to a fraction of the fine grid data} (randomized sketches typically require access to all of it), and {\it need not be formed explicitly as matrices} (many randomized sketches do).}

\alec{To this end,} several pass-efficient deterministic matrix sketching algorithms for computing the \alec{SVD} and \alec{row ID} of PDE data matrices \alec{are presented}. To provide guarantees on the accuracy of the proposed algorithms, novel theoretical results demonstrating deterministic matrix sketches to be an attractive tool under appropriate assumptions on the data are presented. Then, a single-pass algorithm for computing a low-rank matrix \alec{ID} which generalizes an algorithm from~\cite{dunton2020pass} is presented. This work also presents, to the best of the authors' knowledge, the first algorithm which uses a matrix sketch for power iteration~\cite{halko2011finding} to compute a low-rank SVD approximation. The proposed power iteration scheme yields  substantial improvements in approximation accuracy \alec{and algorithm robustness when the singular value decay of a matrix is slow; it is the first of its kind which requires} \textit{no additional passes over the data matrix}. 

Numerical results demonstrate the efficacy of the proposed schemes in three different application problems. Finally, the proposed low-rank methods are implemented in tandem with three state-of-the-art \alec{lossy scientific data compressors: FPZIP, a predictive coder; ZFP, which computes custom orthogonal transforms on $4^d$ blocks of array data; and SZ-2.0, a predictive coder which generalizes to high-dimensional data tensors. In using the approaches proposed in this work in tandem with these compressors, a low-rank SVD or row ID approximation of the data is first computed. Then, the factor matrices comprising the approximation are passed as input into FPZIP, ZFP, and SZ-2.0. These three compressors are best suited for logically regular arrays whose structure reflects spatial locality in the data. This makes them less well-suited to unstructured data, whereas our methods provably compress unstructured data by identifying low-rank structure in the temporal and parametric domain.} This hybrid workflow consequently yields dramatic improvement in compression ratios with minimal loss of accuracy.

The remainder of this manuscript is outlined as follows. In Section~\ref{sec:proposedschemes}, three deterministic sketching methods for low-rank approximation are proposed. In Section~\ref{sec:twopass}, a theoretical exposition on computing a low-rank SVD approximation of a matrix based on deterministic sketching is provided. In Section~\ref{sec:singlepass}, a single-pass algorithm based on the framework presented in~\cite{yu2017single} is derived. In Section~\ref{sec:SPCID}, the single-pass ID algorithm originally presented in~\cite{dunton2020pass} is generalized. In Section~\ref{sec:coarsegridpoweriteration}, a novel power iteration scheme which relies on approximating the Gramian of the data matrix using a coarse grid sketch is proposed. In Section~\ref{sec:numericalexperiments}, error and runtime experiments for the aforementioned SVD algorithms are conducted. In Section~\ref{sec:numericalexperimentsID}, error and runtime results for the ID algorithms are provided. In Section~\ref{sec:application}, the proposed methods are used with the data compression techniques ZFP~\cite{lindstrom2014fixed}, FPZIP~\cite{lindstrom2006fast}, and SZ~\cite{di2016fast,tao2017significantly,liang2018error} to achieve enhanced \textit{spatio-temporal} compression of data matrices. In Section~\ref{sec:proofs}, proofs of all primary theoretical results are given. In Section~\ref{sec:conclusions}, a brief summary of the results in this manuscript is provided.
\section{Deterministic sketches}
\label{sec:proposedschemes}

\begin{figure}[t]
\centering
\includegraphics[width=0.6\linewidth]{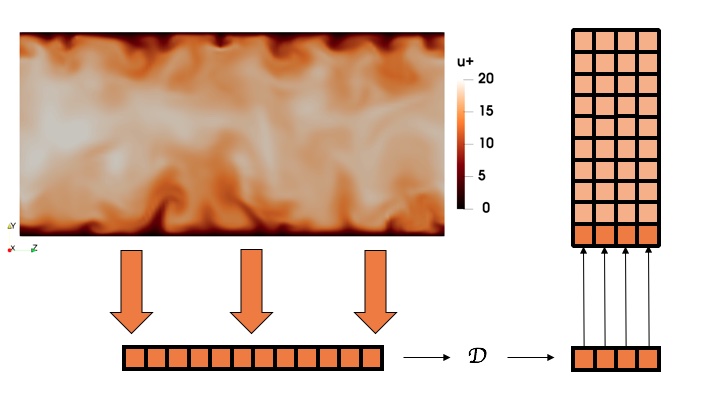}
\caption{Simulation snapshots are read into memory, vectorized, and sketched to form the sketch matrix.}
\label{fig:sketching}
\end{figure}


Throughout this work, input data is assumed to follow the structure given in Figure~\ref{fig:A_schematic}. The ground truth is measured on a fine grid, yielding a corresponding fine-grid data matrix denoted $\bm{A}_f$ where the rows of $\bm{A}_f$ correspond to the snapshots of the solution at different time instances (or solution realizations for different input parameter values).
The coarse grid data matrix $\bm{A}_c \in \mathbb{R}^{m \times n_c}$ is \alec{the} fine grid matrix $\bm{A}_f \in \mathbb{R}^{m \times n}$ deterministically mapped into a lower dimension via a linear operator $\bm{D} \in \mathbb{R}^{n \times n_c}$,
\begin{equation}
\label{eq:generaldeterministicsketch}
\bm{A}_c = \bm{A}_f\bm{D}.
\end{equation}
\alec{In the following subsection, we specify the properties of the matrix $\bm{D}$.}

\subsection{Deterministic sketches for low-rank approximations}
Three different deterministic matrix sketching framework for the dimension reduction of $\bm{A}_f$ are considered. They are referred to as \alec{{\it direct injection}}, \alec{{\it nearest neighbor sketch}}, and \alec{{\it global sketch}}. 
Generally, these sketches may be represented as {\it weighted} averages of the QoI over a \alec{stencil.} The stencil size \alec{determines the} fine grid QoI values used to compute each coarse grid QoI value, which is equivalent to the maximum number of nonzero entries in each column of the sketch matrix $\bm{D}$. 
Let $s = \ceil{n/n_c}$ be the sub-sampling factor, i.e., \alec{the factor by which the feature space dimension is reduced.}

The first proposed sketch is {\it direct injection}, wherein the coarse grid sketch matrix is directly sub-sampled from the fine grid.
\alec{\begin{equation}
 \label{eq:directinjectionsketch}
  \bm{D}_{i,j}=\begin{cases}
    1 & \text{if} \hspace{6pt} i = js + 1, \hspace{6pt} j = 1, \dots, n_c, \\
    0 & \text{otherwise}.
  \end{cases} 
\end{equation}}
 
The second sketch, {\it nearest neighbors}, describes a general framework wherein grid points at the center of each stencil are weighted more heavily than their neighbors. \alec{This is similar to convolutional filters used for smoothing in image processing. For example, in 1-D one could define the nearest neighbor sketch matrix to have a stencil of size 3 and weights of $1/4$, $1/2$, and $1/4$:}
\alec{\begin{equation}
 \label{eq:nearestneighborsketch}
  \bm{D}_{i,j}=\begin{cases}
    1/4 & \text{if} \hspace{6pt} \vert i - js \vert = 1, \hspace{6pt} j = 1, \dots, n_c, \\
    1/2 & \text{if} \hspace{6pt} i = js, \hspace{6pt} j = 1, \dots, n_c, \\
    0 & \text{otherwise}.
  \end{cases} 
\end{equation}}

The third proposed deterministic sketching framework, {\it global sketch}, computes an {\it unweighted} average over every $s$ degrees of freedom.
\alec{\begin{equation}
 \label{eq:globalsketch}
  \bm{D}_{i,j}=\begin{cases}
    1/s & \text{if} \hspace{6pt} i = js + k, \hspace{6pt} j = 1, \dots, n_c, \hspace{6pt} k = 1, \dots, s,\\
    0 & \text{otherwise}.
  \end{cases} 
\end{equation}}

\begin{table}[t]
\centering
\tabcolsep7pt\begin{tabular}{lccc}
\hline
Method & Computational complexity &  Stencil size & Fraction of fine grid used\\
\hline
Direct injection & $\mathcal{O}(1)$  &  1 & $ n_c/n$ \\
Nearest neighbor & $\mathcal{O}(dmn_c)$  &  $2d+1$ & $\min((2d+1) n_c/n,1)$ \\
Global sketch & $\mathcal{O}(mnn_c)$ & $s-1$ & $1$\\
Dense Gaussian matrix & $\mathcal{O}(mnn_c)$ & $n$ & $1$ \\
\hline
\end{tabular}
\caption{Computational complexity of the deterministic sketch operations proposed in this work. $m$ corresponds to the number of rows of the data matrix (number of snapshots from the PDE data), $n$ corresponds to the number of degrees of freedom in the fine grid, $n_c$ corresponds to the reduced dimension of the coarse data, $s = \lceil n/n_c \rceil$ is the sub-sampling factor, and $d$ is the number of neighbors in the nearest neighbor sketch.}
\label{tab:sketches}
\end{table}
In Table~\ref{tab:sketches}, properties of the three sketching frameworks are provided. The computational complexity is the leading order term (up to a constant) of the number of floating-point operations (FLOPs) required to form the coarse grid data matrix $\bm{A}_c$ from $\bm{A}_f$. 
The fastest of the three deterministic sketches is direct injection; it requires $\mathcal{O}(1)$ FLOPs to form the deterministic sketch $\bm{A}_c$. Having the smallest stencil of the three sketches, direct injection also uses the lowest fraction of information from the fine grid data $\bm{A}_f$. The nearest neighbor sketch constitutes the middle ground with respect to both computational complexity and fraction of fine grid used; it has stencil of size $2d+1$. Tuning $d$ allows the user to determine (1) how quickly the sketch can be evaluated and (2) what fraction of the fine grid data is used to construct the coarse grid. Finally, global sketch can be viewed as an averaging filter which requires the largest FLOP count of the three methods, but \alec{uses all of the entries in $\bm{A}_f$ to form $\bm{A}_c$}. 

\begin{remark}
In accessing data from a remote server, direct injection and nearest neighbor may offer significant reduction in memory movement. For example, in sketching 3D data, a direct injection sketch comprised of sub-sampling in each dimension by a factor of 10 would enable an approximate 1000-fold download speedup. Global sketch, and more broadly sketches which require access to entire snapshots, e.g., dense Gaussian sketches, do not offer the same benefit. 
\end{remark}


\section{Singular value decomposition algorithms}
\label{sec:twopass}
\begin{algorithm}[t]
\caption{PROTO-TPC $\bm{A}_f \approx \hat{\bm{A}}_f =  \tilde{\bm{U}}_k\tilde{\bm{S}}_k\tilde{\bm{V}}_k^{T}$}  \label{alg:protosvd}
\begin{algorithmic}[1]
\Procedure{PROTO-TPC}{$\bm{A}_f \in \mathbb{R}^{m \times n}$}
\State $\bm{D} \gets$ deterministic sketch matrix
\State $k \gets$ target rank
\State $p \gets$ oversampling parameter
\State $n_c \gets k + p$ coarse grid size
\State $\bm{A}_c \gets \bm{A}_f\bm{D}$
\State $\bm{U}_c \bm{S}_c\bm{V}_c^T = SVD(\bm{A}_c)$
\State $\tilde{\bm{U}}_k \gets \bm{U}_c(:,1:k)$; $\tilde{\bm{S}}_k \gets \bm{S}_c(1:k, 1:k)$;
\State $\tilde{\bm{V}}_{k} \gets ((\bm{U}_k\bm{S}_k)^{+}\bm{A}_f)^T$
\State $\bm{return}$ $\tilde{\bm{U}}_k, \tilde{\bm{S}}_k, \tilde{\bm{V}}_k$
\EndProcedure
\end{algorithmic}
\end{algorithm} 
This section presents analysis which identifies conditions under which the coarse grid matrix $\bm{A}_c$ retains enough information to enable accurate approximation of the first $k$ left singular vectors of using PROTO-TPC (Algorithm~\ref{alg:protosvd}).
First, the SVD of $\bm{A}_c$ is computed.
\begin{align}
    \bm{A}_c &= \bm{U}_c\bm{S}_c\bm{V}^T_c. 
\end{align}
Then, the following matrices which constitute the final approximation are formed. In MATLAB notation,
\begin{align}
    \tilde{\bm{U}}_k &= \bm{U}_c(:,1:k),\\
    \tilde{\bm{S}}_k &=  \bm{S}_c(1:k,1:k), \\
    \tilde{\bm{V}}_k &= \bm{A}_f^T\tilde{\bm{U}}_k\tilde{\bm{S}}_k^+,\\
    \bm{A}_f & \approx \tilde{\bm{U}}_k \tilde{\bm{S}}_k \tilde{\bm{V}}^T_k \label{eqn:kranksvdapprox}.
\end{align}
The following definition provides the foundation for deriving an error bound on $\Vert \bm{A}_f - \tilde{\bm{U}}_k\tilde{\bm{S}}_k\tilde{\bm{V}}_k^T \Vert_2$. 
\alec{\begin{definition}
\label{def:interpop}
Let $\bm{M} : \mathbb{R}^{n_c} \rightarrow \mathbb{R}^{n}$ be a linear operator such that $\bm{A}_f = \bm{A}_c\bm{M} + \bm{E}_I$, with $\bm{E}_I \in \mathbb{R}^{m \times n}$ the matrix containing the corresponding approximation error.
\end{definition}}

\begin{theorem}
\label{thm:tpsvdcoarseerror}
Let $\bm{A}_c \in \mathbb{R}^{m \times n_c}$ be a coarse grid sketch of $\bm{A}_f$, $\sigma_{c,k+1}$ be the $(k+1)^{th}$ largest
singular value of $\bm{A}_c$, and $\bm{E}_I$ and $\bm{M}$ be as in Definition~\ref{def:interpop}. Then,  the \alec{rank-$k$} approximation \alec{(\ref{eqn:kranksvdapprox})} satisfies 
\begin{equation}
\Vert \bm{A}_f - \tilde{\bm{U}}_k\tilde{\bm{\Sigma}}_k\tilde{\bm{V}}^T_k \Vert_2 \leq \Vert \bm{M} \Vert_2  \sigma_{c,k+1}   + \Vert \bm{E}_I \Vert_2.
\end{equation}

\end{theorem}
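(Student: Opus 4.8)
The plan is to observe that, despite its three-factor appearance, the approximation (\ref{eqn:kranksvdapprox}) is nothing more than an orthogonal projection of $\bm{A}_f$ onto the leading left singular subspace of the sketch $\bm{A}_c$, and then to control the residual of that projection using the interpolation relation of Definition~\ref{def:interpop}. First I would substitute $\tilde{\bm{V}}_k = \bm{A}_f^T\tilde{\bm{U}}_k\tilde{\bm{S}}_k^+$ into the product $\tilde{\bm{U}}_k\tilde{\bm{S}}_k\tilde{\bm{V}}_k^T$. Assuming $\bm{A}_c$ has rank at least $k$ so that the diagonal entries of $\tilde{\bm{S}}_k$ are strictly positive, the pseudoinverse is symmetric, $(\tilde{\bm{S}}_k^+)^T = \tilde{\bm{S}}_k^+$, and $\tilde{\bm{S}}_k\tilde{\bm{S}}_k^+ = \bm{I}_k$. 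The product therefore collapses to $\tilde{\bm{U}}_k\tilde{\bm{U}}_k^T\bm{A}_f$, so that the approximation error equals $\Vert (\bm{I} - \bm{P})\bm{A}_f \Vert_2$ with $\bm{P} := \tilde{\bm{U}}_k\tilde{\bm{U}}_k^T$ the orthogonal projector onto $\mathrm{range}(\tilde{\bm{U}}_k)$.

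Next I would insert the splitting $\bm{A}_f = \bm{A}_c\bm{M} + \bm{E}_I$ from Definition~\ref{def:interpop} and apply the triangle inequality,
\begin{equation}
\Vert (\bm{I} - \bm{P})\bm{A}_f \Vert_2 \leq \Vert (\bm{I} - \bm{P})\bm{A}_c\bm{M} \Vert_2 + \Vert (\bm{I} - \bm{P})\bm{E}_I \Vert_2.
\end{equation}
For the first term, submultiplicativity gives $\Vert (\bm{I} - \bm{P})\bm{A}_c\bm{M}\Vert_2 \leq \Vert (\bm{I} - \bm{P})\bm{A}_c\Vert_2 \, \Vert \bm{M} \Vert_2$; for the second, since $\bm{I} - \bm{P}$ is an orthogonal projector we have $\Vert \bm{I} - \bm{P}\Vert_2 \leq 1$, hence $\Vert (\bm{I} - \bm{P})\bm{E}_I\Vert_2 \leq \Vert \bm{E}_I\Vert_2$.

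It remains to evaluate $\Vert (\bm{I} - \bm{P})\bm{A}_c\Vert_2$. Because the columns of $\tilde{\bm{U}}_k$ are exactly the $k$ leading left singular vectors of $\bm{A}_c$, the projection $\bm{P}\bm{A}_c$ reproduces the truncated expansion $\sum_{i=1}^k \sigma_{c,i}\bm{u}_i\bm{v}_i^T$, i.e.\ the Eckart--Young optimal rank-$k$ approximation $(\bm{A}_c)_k$. Thus $(\bm{I} - \bm{P})\bm{A}_c = \bm{A}_c - (\bm{A}_c)_k$ and $\Vert (\bm{I} - \bm{P})\bm{A}_c\Vert_2 = \sigma_{c,k+1}$. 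Substituting back yields the claimed bound. The argument is almost entirely mechanical; the one step requiring care -- and the crux of the result -- is the algebraic collapse of step one, since it is what converts the opaque three-factor expression into a projection whose residual splits cleanly across the two error sources of Definition~\ref{def:interpop}. The only hypothesis that must be tracked is $\mathrm{rank}(\bm{A}_c) \geq k$, without which $\tilde{\bm{S}}_k\tilde{\bm{S}}_k^+ \neq \bm{I}_k$ and the projection interpretation breaks down.
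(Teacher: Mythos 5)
Your proposal is correct and follows essentially the same route as the paper's proof: both collapse $\tilde{\bm{U}}_k\tilde{\bm{S}}_k\tilde{\bm{V}}_k^T$ to the orthogonal projection $\tilde{\bm{U}}_k\tilde{\bm{U}}_k^T\bm{A}_f$, substitute $\bm{A}_f = \bm{A}_c\bm{M} + \bm{E}_I$, and split the residual via the triangle inequality, with the first term bounded by $\Vert\bm{M}\Vert_2\,\sigma_{c,k+1}$ through Eckart--Young and the second by $\Vert\bm{E}_I\Vert_2$. Your write-up is somewhat more explicit than the paper's (which leaves the submultiplicativity and projector steps implicit), and your remark that $\mathrm{rank}(\bm{A}_c)\geq k$ is needed for $\tilde{\bm{S}}_k\tilde{\bm{S}}_k^+ = \bm{I}_k$ is a correct hypothesis the paper does not state.
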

\begin{proof}
See Section~\ref{sec:prooftpsvdcoarseerror}.
\end{proof}

This error bound indicates that the performance of the algorithm depends on the singular value decay of the coarse grid data matrix, as well as its approximate mapping to the fine grid \alec{data}. If there exists an accurate mapping of the QoI from \alec{the coarse to fine grid} (quantified in the matrix $\bm{E}_I$), then one should expect that the singular vectors and values computed from $\bm{A}_c$ are representative of those of $\bm{A}_f$ to within error governed by the mapping.


PROTO-TPC \alec{(Algorithm~\ref{alg:protosvd})} is outlined as a theoretical exposition; its practicality is of major concern. First, the approximation to the right singular vectors, $\bm{V}^T_k$, is not an orthonormal matrix. Second, computing the SVD of $\bm{A}_c$ is expensive, and contributes significantly to the overall complexity of the algorithm. To address these concerns, the basic two-pass randomized SVD algorithm \alec{in~\cite{halko2011finding}, presented next,} enables a two-pass coarse grid SVD approximation in which $\tilde{\bm{V}}_k$ is orthogonal.

\subsection{A faster two-pass and single-pass SVD algorithm}
\label{sec:singlepass}
The basic framework for computing an SVD from a matrix sketch presented here is taken from~\cite{halko2011finding}, and shown in Algorithm~\ref{alg:coarsetpsvd}. In this work, instead of using random projection to reduce the dimension of the input matrix, deterministic sketches (with the option of further embedding via random projection) are employed. This is reflected in Step~\ref{step:coarsenproto} of PROTO-TPC, where a Gaussian matrix sketch has been replaced with a deterministic sketch. 
Unlike in PROTO-TPC, the QR decomposition of $\bm{A}_c$ is computed instead of its SVD. This yields the matrix $\bm{B}$ \alec{(shown below in Step~\ref{step:QRSVDB})}. The SVD of $\bm{B}$ then yields a low-rank approximation of $\bm{A}_f$:
\begin{subequations}
\begin{align}
    \bm{A}_c &= \bm{Q}_c\bm{R}_c \label{step:QRSVDcoarse}, \\
    \bm{B} &= \bm{Q}_c^T \bm{A}_f \label{step:QRSVDB}, \\
     \bm{B} &= \bm{U} \bm{S} \bm{V}^T, \\
    \bm{A}_f & \approx \bm{Q}_c\tilde{\bm{U}}_k \tilde{\bm{S}}_k \tilde{\bm{V}}_k^T.
\end{align}
\end{subequations}
\begin{algorithm}[t]
\caption{TPC-SVD $\bm{A} \approx \hat{\bm{A}}_f = \tilde{\bm{U}}_k \tilde{\bm{S}}_k \tilde{\bm{V}}_k^T$ (adapted from~\cite{halko2011finding})} 	\label{alg:coarsetpsvd}
\begin{algorithmic}[1]
\Procedure{TPC-SVD}{$\bm{A}$ $\in \mathbb{R}^{m \times n}$}
\State $\bm{D} \in \mathbb{R}^{n \times n_c} \gets$ deterministic sketch matrix; $n_c > k$
\State $\bm{\Omega} \gets randn(n_c,\ell)$ (Optional random projection matrix); $n_c > \ell > k$
\State $k \gets$ target rank
\State $\bm{A}_c \gets \bm{A}_f\bm{D}$ \label{step:coarsenproto}
\State  $\bm{A}_c = \bm{A}_c \bm{\Omega}$ $\qquad$ \text{(Optional random projection)}
\State $\bm{Q}_c \gets  QR(\bm{A}_c)$ 
\State $\bm{B} \gets \bm{Q}_c^T\bm{A}_f$
\State $\tilde{\bm{U}}$, $\bm{S}$, $\bm{V}
\gets SVD(\bm{B})$
\State $\bm{U} \gets \bm{Q}_c\tilde{\bm{U}}$
\State $\tilde{\bm{U}}_k \gets \bm{U}(:,1:k)$; $\tilde{\bm{S}}_k \gets \bm{S}(1:k,1:k)$; $\tilde{\bm{V}}_k \gets \bm{V}(:,1:k)$
\State $\bm{return}$ $\tilde{\bm{U}}_k$, $\tilde{\bm{S}}_k$, $\tilde{\bm{V}}_k$
\EndProcedure
\end{algorithmic}
\end{algorithm}
\begin{algorithm}[t]
\caption{SPC-SVD $\bm{A}_f \approx \hat{\bm{A}}_f  = \tilde{\bm{U}}_k\tilde{\bm{S}}_k\tilde{\bm{V}}_k^{T}$ (adapted from~\cite{yu2017single})}  \label{alg:coarsespsvd}
\begin{algorithmic}[1]
\Procedure{SPC-SVD}{$\bm{A}_f \in \mathbb{R}^{m \times n}$}
\State $k \gets$ target rank
\State $p \gets$ oversampling parameter
\State $n_c \gets k + p$  coarse grid size
\State $\bm{D} \in \mathbb{R}^{n \times n_c} \gets$ deterministic sketch matrix; $n_c > k$
\State $\ell \gets$ optional random projection dimension; $\ell < n_c$
\State $\bm{\Omega} \gets randn(n_c,\ell)$ (Optional random projection matrix); $n_c > \ell > k$
\State instantiate $\bm{A}_c$
\State $\bm{H} \gets zeros(n,\min(n_c,\ell))$
\State $\bm{while}$ $\bm{A}_f$ is not entirely read through $\bm{do}$
\State $\qquad$ read the next set of rows $\bm{a}_f$ in RAM 
\State $\qquad$ $\bm{a}_c \gets \bm{a}_f\bm{D}$
\State $\qquad$ $\bm{a}_c \gets \bm{a}_c\bm{\Omega}$ $\qquad$ \text{(Optional random projection)} 
\State $\qquad$ $\bm{A}_c \gets [\bm{A}_c; \bm{a}_c]$
\State $\qquad$ $\bm{H} \gets \bm{H} + \bm{a}_f^{T}\bm{a}_c$
\State $\bm{end}$ $\bm{while}$
\State $\bm{Q}_c,\bm{R}_c \gets QR(\bm{A}_c)$ 
\State $\bm{H} \gets \bm{H}\bm{R}_c^{-1}$
\State  $\tilde{\bm{U}}, \bm{S}, \bm{V} \gets SVD(\bm{H}^T)$;
\State $\bm{U} \gets \bm{Q}_c\tilde{\bm{U}}$;
\State $\tilde{\bm{U}}_k \gets \bm{U}(:,1:k)$; $\tilde{\bm{V}}_k \gets \bm{V}(:,1:k)$; $\tilde{\bm{S}}_k \gets \bm{S}(1:k, 1:k)$
\State $\bm{return}$ $\tilde{\bm{U}}_k, \tilde{\bm{S}}_k, \tilde{\bm{V}}_k$ 
\EndProcedure
\end{algorithmic}
\end{algorithm} 
To reduce the number of passes from two \alec{(required above in~\ref{step:QRSVDcoarse} and~\ref{step:QRSVDB})} to one, a single-pass coarse grid algorithm for computing a rank-$k$ approximation of a data matrix, SPC-SVD is presented. The algorithmic framework proposed in~\cite{yu2017single} is used to construct a low-rank approximation in a single-pass over the data matrix; the method is presented in this work as Algorithm~\ref{alg:coarsespsvd}. \alec{The key change made to the algorithm in~\cite{yu2017single} is the sketch used and the omission of matrix blocking and re-orthonormalization in the QR step.}

This algorithm relies on the formation of two sketch matrices in a single-pass over the fine grid data; the procedure for computing these sketches online is given in the while loop in Algorithm~\ref{alg:coarsespsvd}:
\begin{align}
    \bm{A}_c &= \bm{A}_f \bm{D},\\ 
    \bm{H} &= \bm{A}^T_f \bm{A}_c.
\end{align}
The QR decomposition of $\bm{A}_c$ is then used to update $\bm{H}$:
\begin{align}
    \bm{A}_c &= \bm{Q}_c \bm{R}_c, \\
    \bm{H} &= \bm{H} \bm{R}_c^{-1}, \\
    &= \bm{A}^T_f \bm{A}_c  \bm{R}_c^{-1},
    \\
    &= \bm{A}^T_f \bm{Q}_c\bm{R}_c\bm{R}_c^{-1}, \\
    &= \bm{A}^T_f\bm{Q}_c \label{step:Rinvfullrank}.
\end{align}
Computing the SVD of $(\bm{H}\bm{R}_c^{-1})^T$, 
\begin{equation}
    (\bm{H}\bm{R}_c^{-1})^T = \bm{Q}_c^T\bm{A}_f = \bm{U}\bm{S}\bm{V}^T,
\end{equation}
which yields a rank-$k$ approximation for $\bm{A}_f$ via
\begin{equation}
    \bm{A}_f \approx \bm{Q}_c\bm{Q}_c^T\bm{A}_f = \bm{Q}_c(\bm{H}\bm{R}_c^{-1})^T \approx \bm{Q}_c\tilde{\bm{U}}_k\tilde{\bm{S}}_k\tilde{\bm{V}}_k^T.
\end{equation}
Having outlined SPC-SVD, the main theoretical result for this section is now presented.
\begin{theorem}
\label{thm:SPCSVD}
Let $\bm{A}_c \in \mathbb{R}^{m \times n_c}$ be a coarse grid sketch with matrices $\bm{M}$ and $\bm{E}_I$ as in Definition~\ref{def:interpop}. Then, a rank-$k$ approximation $\hat{\bm{A}}_f$ for a fine grid data matrix $\bm{A}_f \in \mathbb{R}^{m \times n}$ generated using SPC-SVD \alec{(Algorithm~\ref{alg:coarsespsvd})} satisfies 
\begin{align}
    \Vert \bm{A}_f - \hat{\bm{A}}_f \Vert_2 \leq \sigma_{f,k+1} + \Vert \bm{E}_I \Vert_2.
\end{align}
\end{theorem}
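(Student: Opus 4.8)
The plan is to first make explicit what matrix SPC-SVD actually returns, and then split the error into a projection residual controlled by $\bm{E}_I$ and a truncation residual controlled by $\sigma_{f,k+1}$. Tracing Algorithm~\ref{alg:coarsespsvd}, and using the identity chain culminating in~(\ref{step:Rinvfullrank}), the single-pass bookkeeping produces $\bm{B} := (\bm{H}\bm{R}_c^{-1})^T = \bm{Q}_c^T\bm{A}_f$, whose truncated SVD $\bm{B}_k := (\bm{Q}_c^T\bm{A}_f)_k$ gives the output $\hat{\bm{A}}_f = \tilde{\bm{U}}_k\tilde{\bm{S}}_k\tilde{\bm{V}}_k^T = \bm{Q}_c\bm{B}_k$. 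I would then introduce the orthogonal projector $\bm{P} := \bm{Q}_c\bm{Q}_c^T$ onto $\mathrm{range}(\bm{Q}_c)=\mathrm{range}(\bm{A}_c)$ and insert $\bm{P}\bm{A}_f = \bm{Q}_c\bm{B}$ to write
\[
\bm{A}_f - \hat{\bm{A}}_f = (\bm{I} - \bm{P})\bm{A}_f + \bm{Q}_c(\bm{B} - \bm{B}_k).
\]
Since $\bm{Q}_c$ has orthonormal columns, $\Vert \bm{Q}_c\bm{X}\Vert_2 = \Vert \bm{X}\Vert_2$, so the triangle inequality yields $\Vert \bm{A}_f - \hat{\bm{A}}_f\Vert_2 \le \Vert (\bm{I} - \bm{P})\bm{A}_f\Vert_2 + \Vert \bm{B} - \bm{B}_k\Vert_2$.

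For the truncation term, Eckart--Young gives $\Vert \bm{B} - \bm{B}_k\Vert_2 = \sigma_{k+1}(\bm{B})$, and I would bound $\sigma_{k+1}(\bm{B}) \le \sigma_{f,k+1}$ by testing the rank-$k$ minimization defining $\sigma_{k+1}(\bm{B})$ against the competitor $\bm{Q}_c^T(\bm{A}_f)_k$ (with $(\bm{A}_f)_k$ the optimal rank-$k$ approximation of $\bm{A}_f$), giving $\sigma_{k+1}(\bm{B}) \le \Vert \bm{Q}_c^T(\bm{A}_f - (\bm{A}_f)_k)\Vert_2 \le \sigma_{f,k+1}$ via $\Vert \bm{Q}_c^T\Vert_2 = 1$. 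For the projection term, I would invoke Definition~\ref{def:interpop}: since $\bm{A}_f = \bm{A}_c\bm{M} + \bm{E}_I$ and the columns of $\bm{A}_c\bm{M}$ lie in $\mathrm{range}(\bm{A}_c) = \mathrm{range}(\bm{Q}_c)$, the projector annihilates them, $(\bm{I} - \bm{P})\bm{A}_c\bm{M} = \bm{0}$, leaving $(\bm{I} - \bm{P})\bm{A}_f = (\bm{I} - \bm{P})\bm{E}_I$. As $\bm{I} - \bm{P}$ is an orthogonal projector, $\Vert \bm{I} - \bm{P}\Vert_2 \le 1$, hence $\Vert (\bm{I} - \bm{P})\bm{A}_f\Vert_2 \le \Vert \bm{E}_I\Vert_2$. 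Adding the two estimates gives the stated bound.

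The step requiring the most care is verifying that $\bm{Q}_c$ from the QR factorization genuinely spans $\mathrm{range}(\bm{A}_c)$, so that $(\bm{I} - \bm{P})\bm{A}_c = \bm{0}$; this is the hinge that converts a generic projection residual into the data-dependent quantity $\Vert \bm{E}_I\Vert_2$, and it is exactly where the interpolation structure of Definition~\ref{def:interpop} enters. A secondary obstacle is confirming that the single-pass quantities reproduce the two-pass object $\bm{Q}_c(\bm{Q}_c^T\bm{A}_f)_k$ exactly, which rests on $\bm{R}_c$ being invertible in~(\ref{step:Rinvfullrank}); granting this, the remaining estimates are the routine norm inequalities above. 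Finally, I would note that the optional random-projection branch merely replaces $\mathrm{range}(\bm{A}_c)$ by $\mathrm{range}(\bm{A}_c\bm{\Omega})$, so the argument applies verbatim once $\bm{Q}_c$ is redefined as the corresponding orthonormal basis; the cleanest exposition restricts to the deterministic-sketch case and remarks on this generalization.
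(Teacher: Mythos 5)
Your proposal is correct and follows essentially the same route as the paper's proof: split the error via the projector $\bm{Q}_c\bm{Q}_c^T$, use $\bm{A}_f = \bm{A}_c\bm{M} + \bm{E}_I$ from Definition~\ref{def:interpop} so the projection residual collapses to $\Vert\bm{E}_I\Vert_2$, and bound the truncation term by $\sigma_{f,k+1}$ by testing against a rank-$k$ competitor built from the optimal rank-$k$ approximation of $\bm{A}_f$. The only cosmetic difference is that you phrase the second term as $\sigma_{k+1}(\bm{Q}_c^T\bm{A}_f)$ while the paper works with $\bm{Q}_c\bm{Q}_c^T\bm{A}_f$; these are equivalent since $\bm{Q}_c$ has orthonormal columns.
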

\begin{proof}
See Section~\ref{sec:proofspcsvd}.
\end{proof}
This error bound indicates that the sub-optimality of SPC-SVD is bounded above by the error incurred from mapping the coarse grid data on the fine grid. 
\begin{remark}
This error bound can also be applied to TPC-SVD (Algorithm~\ref{alg:coarsetpsvd}). 
\end{remark}

\section{A single-pass interpolative decomposition algorithm}
\label{sec:SPCID}
\begin{algorithm}[t]
\caption{TPC-ID $\bm{A}_f \approx \hat{\bm{A}}_f = \bm{P}_c \bm{A}_f(\mathcal{I}_c,:)$~\cite{dunton2020pass}} 	\label{alg:subid}
\begin{algorithmic}[1]
\Procedure{TPC-ID}{$\bm{A}$ $\in \mathbb{R}^{m \times n}$}
\State $\bm{D} \in \mathbb{R}^{n \times n_c} \gets$ deterministic sketch matrix
\State $\bm{\Omega} \gets randn(n_c,\ell)$ (Optional random projection matrix); $n_c > \ell > k$
\State $k \gets$ target rank
\State $\bm{A}_c \gets \bm{A}_f\bm{D}$
\State  $\bm{A}_c = \bm{A}_c \bm{\Omega}$ $\qquad$ \text{(Optional random projection)}
\State  $\bm{P}_c, \mathcal{I}_c \gets ID(\bm{A}_c^T)$
\State $\bm{P}_c \gets \bm{P}^T_c$
\State $\bm{return}$ $\bm{P}_c$, $\bm{A}_f(\mathcal{I}_c,:)$
\EndProcedure
\end{algorithmic}
\end{algorithm}

\begin{algorithm}[t]
\caption{SPC-ID $\bm{A}_f \approx \hat{\bm{A}}_f = \bm{P}_c \bm{A}_c(\mathcal{I}_c,:)\bm{T}_r$} 	\label{alg:spcid}
\begin{algorithmic}[1]
\Procedure{SPC-ID}{$\bm{A}$ $\in \mathbb{R}^{m \times n}$}
\State $\bm{D} \in \mathbb{R}^{n \times n_c} \gets$ deterministic sketch matrix
\State $\bm{\Omega} \gets randn(n_c,\ell)$ (Optional random projection matrix); $n_c > \ell > k$
\State $k \gets$ target rank
\State $r \gets$ lifting operator rank; $r \geq k$
\State $\bm{H} \gets zeros(n,l)$
\State $\bm{while}$ $\bm{A}_f$ is not entirely read through $\bm{do}$
\State $\qquad$ read the next row into RAM $\bm{a}_f$
\State $\qquad$ $\bm{a}_c \gets \bm{a}_f\bm{D}$
\State $\qquad$ $\bm{A}_c \gets [\bm{A}_c; \bm{a}_c]$
\State $\qquad$ $\bm{H} \gets \bm{H} + \bm{a}_f^{T}\bm{a}_c$
\State $\bm{end}$ $\bm{while}$
\State $\bm{U}_c, \bm{S}_c, \bm{V}_c \gets  SVD(\bm{A}_c)$ 
\State $\tilde{\bm{U}}_r \gets \bm{U}_c(:,1:r)$
\State $\bm{T}_r \gets \bm{V}_c\bm{S}_c^{+}\bm{U}^T_c\tilde{\bm{U}}_r\tilde{\bm{U}}^T_r \bm{U}_c \bm{S}_c^{+}\bm{V}_c^T \bm{H}^T $
\State $\bm{delete}$ $\bm{H}, \tilde{\bm{U}}_r, \bm{U}_c, \bm{S}_c, \bm{V}_c $ $\qquad$ \text{(If necessary)}
\State  $\bm{A}_{c} = \bm{A}_c \bm{\Omega}$ $\qquad$ \text{(Optional random projection)}
\State  $\bm{P}_c, \mathcal{I}_c \gets ID(\bm{A}_c)$ \text{or} $\bm{P}_c, \mathcal{I}_c \gets ID(\bm{U}_c(:,1:k))$
\State $\bm{P}_c \gets \bm{P}^T_c$
\State $\bm{return}$ $\bm{P}_c$, $\bm{A}_c(\mathcal{I}_c,:)$, $\bm{T}_r$
\EndProcedure
\end{algorithmic}
\end{algorithm}
The row interpolative decomposition (row ID) represents a matrix $\bm{A}_f \in \mathbb{R}^{m \times n}$ as a product of a subset of its rows $ \bm{A}_f(\mathcal{I}_f,:) \in \mathbb{R}^{k \times n}$ and a coefficient matrix $\bm{P}_f \in \mathbb{R}^{m \times k}$ such that 
\begin{equation}
\label{eqn:ID}
\bm{A}_f \approx \bm{P}_f\bm{A}_f(\mathcal{I}_f,:),
\end{equation}
with  $\mathcal{I}_f\subseteq\{1,\dots,m\}$, $\vert\mathcal{I}_f\vert=k$. Further, $\bm{P}_f(\mathcal{I}_f,:) = \bm{I}$, with $\bm{I} \in \mathbb{R}^{k \times k}$ an identity matrix. Row ID earns its name from the fact that it {\it interpolates} the rows of $\bm{A}_f$ using a basis consisting of a subset of its rows.

To obtain the row ID, the rank-$k$ column-pivoted QR decomposition of $\bm{A}_f^T$ is employed,
\begin{equation}
\bm{A}_f^T\bm{Z} \approx \bm{QR},
\end{equation}
where $\bm{Z}\in\mathbb{R}^{m\times m}$ is a permutation matrix, $\bm{Q}\in\mathbb{R}^{n\times k}$ has orthonormal columns, and $\bm R\in\mathbb{R}^{k\times m}$ is upper triangular. Separating $\bm{R}$ into two sub-matrices $\bm{R} = \left[\bm{R}_{1} \hspace{2pt}\vert\hspace{2pt} \bm{R}_{2} \right]$, where $\bm{R}_{1}\in\mathbb{R}^{k\times k}$ and $\bm{R}_{2}\in\mathbb{R}^{k\times(m-k)}$, and solving $\bm{R}_{2}\approx\bm{R}_{1}\bm{C}$ \alec{for $\bm{C}$ via least squares} yields the final approximation
\begin{equation}
\bm{A}_f^T \approx \bm{Q}\bm{R}_{1} \left[\bm{I} \hspace{2pt}\vert\hspace{2pt} \bm{C}\right]\bm{Z}^{T} 
= \bm{A}_f^T(:,\mathcal{I}_f)\left[\bm{I} \hspace{2pt}\vert\hspace{2pt} \bm{C} \right]\bm{Z}^{T}
= \bm{A}_f^T(:,\mathcal{I}_f)\bm{P}^T,
\end{equation}
and hence the equation (\ref{eqn:ID}). From~\cite{cheng2005compression}, there exists a rank-$k$ row ID of any $m \times n$ real matrix such that 
\begin{equation}
\label{eqn:id_bound_orig}
\Vert \bm{A}_f - \bm{P}_f\bm{A}_f(\mathcal{I}_f,:)\Vert_2 \leq \sqrt{1 + k(m-k)}\sigma_{f,k+1},
\end{equation} 
\alec{where $\sigma_{f,k+1}$ is the $(k+1)^{th}$ largest singular value of $\bm{A}_f$.}
Depending on the QR scheme used, row ID may feature a worst-case computational complexity of $\mathcal{O}(mn\min(m,n))$. The complexity is closer to $\mathcal{O}(mnk)$, in particular when using the modified Gram-Schmidt QR scheme from~\cite{golub2012matrix}.

To reduce the runtime of row ID, the index vector and coefficient matrix may be computed on a sketch $\bm{A}_c$ of the matrix $\bm{A}_f$ instead. First, the sketch $\bm{A}_c = \bm{A}_f\bm{D}$ and its rank-$k$ row ID is computed
\begin{equation}
     \bm{A}_c \approx  \bm{P}_c \bm{A}_c(\mathcal{I}_c,:). 
\end{equation}
The index vector $\mathcal{I}_c$ and coefficient matrix $\bm{P}_c$ - computed on the coarse grid data matrix $\bm{A}_c$ - may then be used to approximate the fine grid data matrix $\bm{A}_f$ by {\it lifting} the row skeleton of $\bm{A}_c$ to $\bm{A}_f$ 
\begin{equation}
     \bm{A}_f \approx \bm{P}_c \bm{A}_f(\mathcal{I}_c,:). 
\end{equation}
This procedure constitutes \alec{the two-pass coarse grid ID (TPC-ID)} (Algorithm 5 in~\cite{dunton2020pass} and Algorithm~\ref{alg:subid} in this work). The algorithm requires a first pass over $\bm{A}_f$ to form the sketch $\bm{A}_c$ and compute $\mathcal{I}_c$ and $\bm{P}_c$, and a second pass to obtain the \alec{row} skeleton $\bm{A}_f(\mathcal{I}_c,:)$.

The single-pass ID in~\cite{dunton2020pass} skips the second pass over $\bm{A}_f$ by interpolating the coarse grid \alec{row} skeleton $\bm{A}_c(\mathcal{I}_c,:)$ on the fine grid to obtain an {\it approximation} to the fine grid \alec{row} skeleton, here denoted $\hat{\bm{A}}_f(\mathcal{I}_c,:)$. Let $\bm{M}$ be an operator which maps the coarse grid data on the fine grid (see Definition~\ref{def:interpop}). Then, the single-pass ID approximation is given by 
\begin{equation}
    \bm{A}_f \approx \bm{P}_c\hat{\bm{A}}_f(\mathcal{I}_c,:) = \bm{P}_c\bm{A}_c(\mathcal{I}_c,:)\bm{M}. 
\end{equation}
Error bounds and computational complexities for TPC-ID and this single-pass ID are provided in~\cite{dunton2020pass}.

A major limitation to this existing single-pass ID algorithm is the large errors associated with the interpolation.
To this end, an improved single-pass algorithm for computing the row ID, SPC-ID, is presented.  

SPC-ID (Algorithm~\ref{alg:spcid}) is a generalization of the method presented in~\cite{dunton2020pass}, is based on the bi-fidelity approximation algorithm presented in~\cite{hampton2018practical}, and utilizes the sketching procedure from the single-pass SVD algorithm presented in~\cite{yu2017single}. \alec{SPC-ID is able to compute a low-rank approximation in a single-pass by forming the {lifting operator} $\bm{T}_r$ with $\bm{A}_f \approx \bm{A}_c \bm{T}_r$ in a single-pass over the input}. First, as in Algorithm~\ref{alg:coarsespsvd}, $\bm{A}_f$ is read row-by-row into working memory to form the matrices
\begin{subequations}
\begin{align}
    \bm{A}_c &= \bm{A}_f\bm{D}, \\
    \bm{H} &= \bm{A}_f^T\bm{A}_c.
\end{align}
\end{subequations}
\alec{The {\it lifting operator} $\bm{T}_r$, which maps the coarse grid data on the fine grid, can now be constructed.} To form $\bm{T}_r$, the matrix $\tilde{\bm{U}}_r$ is introduced:
\begin{subequations}
\begin{align}
\bm{A}_c &= \bm{U}_c\bm{S}_c\bm{V}_c^T,\\
\tilde{\bm{U}}_r &= \bm{U}_c(:,1:r),
\end{align}
\end{subequations}
\alec{where $r$ can be interpreted as a regularization parameter which may take on any value between the target rank $k$ and the rank of the coarse grid matrix $\bm{A}_c$.
Now, the lifting operator $\bm{T}_r$ \alec{is constructed:}
\begin{align}
\label{eq:augmentliftingop}
\bm{T}_r
&= \bm{V}_c\bm{S}_c^{+}\bm{U}^T_c\tilde{\bm{U}}_r\tilde{\bm{U}}^T_r \bm{U}_c \bm{S}_c^{+}\bm{V}_c^T\bm{H}^T .
\end{align}
Now, because $\bm{H}^T = \bm{A}_c^T\bm{A}_f$, we have,
\begin{align}
    \bm{H}^T = \bm{V}_c\bm{S}_c\bm{U}_c^T\bm{A}_f.
\end{align}
And thus, 
\begin{subequations}
\begin{align}
\bm{T}_r
&= \bm{V}_c\bm{S}_c^{+}\bm{U}^T_c\tilde{\bm{U}}_r\tilde{\bm{U}}^T_r \bm{U}_c \bm{S}_c^{+}\bm{V}_c^T\bm{V}_c\bm{S}_c\bm{U}_c^T\bm{A}_f, \\
&= \bm{V}_c\bm{S}_c^{+}\bm{U}^T_c\tilde{\bm{U}}_r\tilde{\bm{U}}^T_r \bm{U}_c \bm{S}_c^{+}\bm{S}_c\bm{U}_c^T\bm{A}_f, \\
&= \bm{V}_c\bm{S}_c^{+}\bm{U}^T_c\tilde{\bm{U}}_r\tilde{\bm{U}}^T_r \bm{U}_c\bm{U}_c^T\bm{A}_f, \\
&= \bm{V}_c\bm{S}_c^{+}\bm{U}^T_c\tilde{\bm{U}}_r\tilde{\bm{U}}^T_r\bm{A}_f, \\
&= \bm{A}_c^+\tilde{\bm{U}}_r\tilde{\bm{U}}^T_r\bm{A}_f.
\end{align}
\end{subequations}
Note the similarity between this operator and the classic least-squares solution of $\bm{A}_c\bm{X} \approx \bm{A}_f$ for $\bm{X}$, $\bm{A}_c^+\bm{A}_f$. The key distinction is the inclusion of the projection $\tilde{\bm{U}}_r\tilde{\bm{U}}^T_r$, wherein $r$ acts as a regularization parameter. The addition of this regularization echoes work done using the truncated SVD to solve least-squares problems~\cite{hansen1987truncatedsvd}. Introducing this regularization has benefits when the coefficient matrix, e.g., $\bm{A}_c$, is ill-conditioned and has well-defined numerical rank, which is precisely the case in this work. In particular, the results in~\cite{hansen1987truncatedsvd} show the truncated SVD achieves accuracy on par with the classically regularized least-squares approach with an optimally selected regularization parameter. These observations further support this specific construction of the operator $\bm{T}_r$.}

\alec{The ID of $\bm{A}_c$ is then computed, yielding the final approximation:}
\begin{align}
    \bm{A}_c & \approx \bm{P}_c \bm{A}_c(\mathcal{I}_c,:), \\
    \bm{A}_f & \approx  \bm{P}_c \bm{A}_c(\mathcal{I}_c,:) \bm{T}_r.
\end{align}

\begin{remark}
SPC-ID as presented requires computing the ID and SVD of $\bm{A}_c$. Fast algorithms for converting an SVD into an ID or vice versa are available~\cite{liberty2007randomized,halko2011finding}. These approaches can be used to speed up SPC-ID. Further exploration of similar ideas is left to a future work.
\end{remark}

Following the analysis presented in~\cite{hampton2018practical}, the parameter $\epsilon(\tau)$ is defined to be
\begin{equation}
    \epsilon(\tau) = \lambda_{\max} (\bm{A}_f\bm{A}_f^T - \tau \bm{A}_c \bm{A}_c^T ),
    \label{eqn:epstau_main_test}
\end{equation}
\alec{where $\tau > 0$ and $\lambda_{\max}$ is the maximum eigenvalue of a given matrix. This parameter measures how well the scaled Gramian of the coarse grid data matrix approximates its fine grid counterpart. This provides the groundwork for the key theoretical result from this section.} 

\begin{theorem}
\label{thm:SPCIDbound}
Let $\hat{\bm{A}}_f$ be the rank-$k$ approximation of $\bm{A}_f$ generated by SPC-ID, $\bm{A}_c$ the coarse grid data matrix, $\bm{P}_c \bm{A}_c(\mathcal{I}_c,:)$ its rank-$k$ row ID approximation, $\sigma_{c,j}$ the $j$th largest singular value of $\bm{A}_c$, and $\epsilon(\tau)$ defined as in (\ref{eqn:epstau_main_test}). Then, 
\begin{align}
    \Vert \bm{A}_f -  \hat{\bm{A}}_f \Vert_2
    &\leq \min_{\tau,k \leq r \leq \text{rank}(\bm{A}_c)} \left(\epsilon(\tau) + \tau \sigma_{c,r+1}^2 \right)^{1/2} + \Vert \bm{A}_c - \bm{P}_c \bm{A}_c(\mathcal{I}_c,:) \Vert_2 \left(\tau + \epsilon(\tau) \sigma_{c,r}^{-2}\right)^{1/2}. 
\end{align}
\end{theorem}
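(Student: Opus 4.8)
The plan is to split the SPC-ID error into the error of the regularized lifting operator $\bm{T}_r$ and the error of the coarse-grid row ID, and then to control each piece with the single Loewner-order inequality that is encoded in $\epsilon(\tau)$. First I would simplify the output of the algorithm. Using the identity $\bm{T}_r = \bm{A}_c^{+}\tilde{\bm{U}}_r\tilde{\bm{U}}_r^{T}\bm{A}_f$ already derived in the excerpt, note that $\bm{A}_c\bm{A}_c^{+}=\bm{U}_c\bm{U}_c^{T}$ is the orthogonal projector onto the range of $\bm{A}_c$ and the columns of $\tilde{\bm{U}}_r$ lie in that range whenever $r\le\text{rank}(\bm{A}_c)$, so
\[
\bm{A}_c\bm{T}_r = \bm{A}_c\bm{A}_c^{+}\tilde{\bm{U}}_r\tilde{\bm{U}}_r^{T}\bm{A}_f = \tilde{\bm{U}}_r\tilde{\bm{U}}_r^{T}\bm{A}_f .
\]
Writing $\hat{\bm{A}}_f = \bm{P}_c\bm{A}_c(\mathcal{I}_c,:)\bm{T}_r$, adding and subtracting $\bm{A}_c\bm{T}_r$, and using submultiplicativity with the triangle inequality gives
\[
\Vert \bm{A}_f - \hat{\bm{A}}_f\Vert_2 \le \Vert (\bm{I}-\tilde{\bm{U}}_r\tilde{\bm{U}}_r^{T})\bm{A}_f\Vert_2 + \Vert \bm{A}_c - \bm{P}_c\bm{A}_c(\mathcal{I}_c,:)\Vert_2\,\Vert \bm{T}_r\Vert_2 .
\]
The ID factor is already present in the statement, so it remains to bound the projection residual and $\Vert \bm{T}_r\Vert_2$.

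The common tool is that, by the definition of $\epsilon(\tau)$, the Loewner inequality $\bm{A}_f\bm{A}_f^{T}\preceq \tau\bm{A}_c\bm{A}_c^{T}+\epsilon(\tau)\bm{I}$ holds for every $\tau>0$. For the projection term I would conjugate this inequality by the projector $\bm{\Pi}=\bm{I}-\tilde{\bm{U}}_r\tilde{\bm{U}}_r^{T}$ and use $\bm{\Pi}^2=\bm{\Pi}$ to obtain $\bm{\Pi}\bm{A}_f\bm{A}_f^{T}\bm{\Pi}\preceq \tau\bm{\Pi}\bm{A}_c\bm{A}_c^{T}\bm{\Pi}+\epsilon(\tau)\bm{\Pi}$. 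Since $\bm{\Pi}$ annihilates the leading $r$ left singular vectors of $\bm{A}_c$, the matrix $\bm{\Pi}\bm{A}_c\bm{A}_c^{T}\bm{\Pi}$ has largest eigenvalue $\sigma_{c,r+1}^2$, and monotonicity of $\lambda_{\max}$ under the Loewner order yields $\Vert (\bm{I}-\tilde{\bm{U}}_r\tilde{\bm{U}}_r^{T})\bm{A}_f\Vert_2^2 = \lambda_{\max}(\bm{\Pi}\bm{A}_f\bm{A}_f^{T}\bm{\Pi}) \le \epsilon(\tau)+\tau\sigma_{c,r+1}^2$, which is the first radical in the statement.

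For the second factor I would write $\bm{T}_r = \bm{V}_c(:,1:r)\,\bm{S}_{c,r}^{-1}\tilde{\bm{U}}_r^{T}\bm{A}_f$ with $\bm{S}_{c,r}=\text{diag}(\sigma_{c,1},\dots,\sigma_{c,r})$. Because $\bm{V}_c(:,1:r)$ has orthonormal columns it is norm-preserving, so $\Vert \bm{T}_r\Vert_2^2 = \lambda_{\max}(\bm{S}_{c,r}^{-1}\tilde{\bm{U}}_r^{T}\bm{A}_f\bm{A}_f^{T}\tilde{\bm{U}}_r\bm{S}_{c,r}^{-1})$. Conjugating the same Loewner inequality by $\bm{S}_{c,r}^{-1}\tilde{\bm{U}}_r^{T}$ and using $\tilde{\bm{U}}_r^{T}\bm{A}_c\bm{A}_c^{T}\tilde{\bm{U}}_r = \bm{S}_{c,r}^{2}$ together with $\tilde{\bm{U}}_r^{T}\tilde{\bm{U}}_r=\bm{I}_r$ collapses the $\tau$-term to $\tau\bm{I}_r$ and the residual to $\epsilon(\tau)\bm{S}_{c,r}^{-2}$, giving $\Vert \bm{T}_r\Vert_2^2 \le \tau+\epsilon(\tau)\sigma_{c,r}^{-2}$. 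Substituting both bounds into the split above, and then minimizing over the free parameters $\tau>0$ and $k\le r\le\text{rank}(\bm{A}_c)$ (the bound holding for each admissible pair), produces exactly the claimed inequality.

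I expect the main obstacle to be the control of $\Vert \bm{T}_r\Vert_2$: one must select precisely the conjugating operator $\bm{S}_{c,r}^{-1}\tilde{\bm{U}}_r^{T}$ so that the $\tau\bm{A}_c\bm{A}_c^{T}$ contribution cancels the inverse-singular-value weights into a clean $\tau\bm{I}_r$, while the isotropic residual $\epsilon(\tau)\bm{I}$ is amplified into the $\sigma_{c,r}^{-2}$ penalty. This is exactly where the regularizing role of $r$ becomes visible, trading the projection error $\sigma_{c,r+1}^2$ (decreasing in $r$) against the conditioning penalty $\sigma_{c,r}^{-2}$ (increasing in $r$), and it explains the minimization over $r$ in the statement. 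The remaining technical care is in the sign of $\epsilon(\tau)$ and in applying the Weyl-type estimate $\lambda_{\max}(\bm{X}+\bm{Y})\le\lambda_{\max}(\bm{X})+\lambda_{\max}(\bm{Y})$ in the regime where the radicands are nonnegative.
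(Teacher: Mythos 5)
Your proof is correct and follows essentially the same route as the paper: the same triangle-inequality split into $\Vert(\bm{I}-\tilde{\bm{U}}_r\tilde{\bm{U}}_r^T)\bm{A}_f\Vert_2$ plus the coarse-grid ID error times $\Vert\bm{T}_r\Vert_2=\Vert\bm{A}_c^+\bm{P}_{\tilde{\mathcal{U}}_r}\bm{A}_f\Vert_2$, followed by the two bounds $(\epsilon(\tau)+\tau\sigma_{c,r+1}^2)^{1/2}$ and $(\tau+\epsilon(\tau)\sigma_{c,r}^{-2})^{1/2}$. The only difference is that the paper cites these two bounds as Lemma~4 of~\cite{hampton2018practical}, whereas you derive them directly via the Loewner-order conjugation argument.
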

\begin{proof}
See Section~\ref{sec:proofSPCID}.
\end{proof}

\alec{The first term indicates the importance that $\epsilon(\tau)$, $\tau$, and $\sigma_{c,r+1}$ are all small. The second term suggests an accurate approximation if the low-rank ID approximation error on the coarse grid, $\epsilon(\tau)$, and $\tau$ are small. It is also critical that $\sigma_{c,r}^{-2}$ is not too large; i.e., that $\sigma_{c,r}$ is not too small. This last observation, coupled with the suggestion that $\sigma_{c,r+1}$ should be small, indicates that identifying a value of $r$ corresponding to a steep dropoff in the singular value decay of $\bm{A}_c$ will therefore help ensure an accurate SPC-ID approximation.}

The error bound given as Theorem~\ref{thm:SPCIDbound} is nearly identical to Theorem 1 of~\cite{hampton2018practical}, which bounds the error of TPC-ID. Notably, Theorem~\ref{thm:SPCIDbound} is a potentially smaller bound \alec{(the first term has a smaller coefficient than that of the bound in~\cite{hampton2018practical})}, even though SPC-ID requires one fewer pass than TPC-ID. Another important distinction is that the rank \alec{$r$} of the lifting operator is minimized over values between $k$ and $\text{rank}(\bm{A}_c)$ in Theorem~\ref{thm:SPCIDbound}; in Theorem 1 of~\cite{hampton2018practical}, $r$ is allowed to vary between $1$ and $\text{rank}(\bm{A}_c)$. 
\begin{remark}
The rank of $\bm{T}_r$ is not necessarily the same as the rank of the ID approximation of $\bm{A}_c$. For consistency, $r \geq k = \vert \mathcal{I}_C \vert$; the approximation rank would otherwise be reduced to $r = \min(r,k) < k$.
\end{remark}

\subsection{Single-pass error estimation for SPC-ID}
\begin{algorithm}[t]
\caption{SPC-ID-ERR $\hat{\epsilon}(\tau) \approx \epsilon(\tau)$ (Algorithm 1 in~\cite{hampton2018practical})}  \label{alg:onlineerrestimatespcid}
\begin{algorithmic}[1]
\Procedure{SPC-ID-ERR}{$\bm{A}_f \in \mathbb{R}^{m \times n}$}
\State $\bm{D} \in \mathbb{R}^{n \times n_c} \gets$ deterministic sketch matrix
\State $m_c \gets$ target temporal dimension
\State $c =  m/m_c$
\State $\bm{for}$ $i = 1:m$
\State $\qquad$ read the $i^{th}$ row  $\bm{a}_f$ into RAM
\State $\qquad$ $\bm{if}$ $\mod(i,c) == 0$
\State $\qquad$ $\qquad$ $\bm{a}_c \gets \bm{a}_f\bm{D}$
\State $\qquad$ $\qquad$ $\bm{B}_f \gets [\bm{B}_f; \bm{a}_f]$
\State $\qquad$ $\qquad$ $\bm{B}_c \gets [\bm{B}_c; \bm{a}_c]$
\State $\qquad$ $\bm{end}$ $\bm{if}$
\State $\bm{end}$ $\bm{for}$
\State $\hat{\epsilon}(\tau) =  \min_{\tau} c \lambda_{\max} (\bm{B}_f\bm{B}_f^T - \tau \bm{B}_c \bm{B}_c^T )$
\EndProcedure
\end{algorithmic}
\end{algorithm} 

A RAM-efficient single-pass method for estimating the error bound presented in Theorem~\ref{thm:SPCIDbound} is provided. 
Following the structure of Algorithm 1 in~\cite{hampton2018practical}, the matrices $\bm{A}_f$ and $\bm{A}_c$ are \alec{sub-sampled} as they are read into RAM, indexed by $\mathcal{J}$ with 
$\mathcal{J} \subseteq\{1,\dots,m\}$ and $\vert\mathcal{J}\vert=m_c$, \alec{where $m_c$ must be chosen large enough to ensure an accurate estimate, but small enough to avoid overuse of working memory}.
\begin{align}
    \bm{B}_f &= \bm{A}_f(\mathcal{J},:), \\
    \bm{B}_c &= \bm{A}_c(\mathcal{J},:).
    \label{eqn:epstauestimate}
\end{align}
The procedure hinges on computing the following estimate for $\epsilon(\tau)$. Let 
\begin{equation}
    \hat{\epsilon}(\tau) = c \lambda_{\max} (\bm{B}_f\bm{B}_f^T - \tau \bm{B}_c \bm{B}_c^T ),
    \label{eqn:epstau}
\end{equation}
\alec{Using (\ref{eqn:epstau_main_test}) error estimation} and low-rank approximation may be achieved in parallel with acceptable RAM usage (see Table~\ref{tab:algsummary} for more details). 


\section{Coarse grid power iteration}
\label{sec:coarsegridpoweriteration}
\begin{algorithm}[t]
\caption{C-PWR}  \label{alg:coarsespsvdpower}
\begin{algorithmic}[1]
\Procedure{C-PWR}{$\bm{A}_f \in \mathbb{R}^{m \times n}$}
\State $\bm{D} \in \mathbb{R}^{n \times n_c} \gets$ deterministic sketch matrix
\State $\mathcal{I} \gets$ sub-sampling index vector to form approximate Gram matrix
\State $\bm{A}_c \gets \bm{A}_f(:,\mathcal{I})$
\State $q \gets$ number of power iterations
\State $\bm{Q}_{np} \gets QR(\bm{A}_f\bm{D})$
\State $G_q \gets \bm{A}_f\bm{D}$
\State $\bm{for}$ $i = 1:q$
\State $\qquad$ $\bm{G}_q \gets \bm{A}_c^T \bm{G}_q$
\State \text{(Optional)}  $\qquad$ $\bm{Q} \gets QR(\bm{G}_q)$  $\qquad$ 
\State \text{(Optional)} $\qquad$ $\bm{G}_q \gets \bm{Q}$   $\qquad$ 
\State $\qquad$ $\bm{G}_q \gets \bm{A}_c \bm{G}_q$ 
\State  \text{(Optional)} $\qquad$ $\bm{Q} \gets QR(\bm{G}_q)$  $\qquad$ 
\State  \text{(Optional)} $\qquad$ $\bm{G}_q \gets \bm{Q}$ $\qquad$ 
\State $\bm{end}$ $\bm{for}$
\State $\bm{return}$ $\bm{G}_q$, $\bm{Q}_{np}$
\EndProcedure
\end{algorithmic}
\end{algorithm} 

\alec{In low-rank approximations generated using randomized sketching approaches where singular value decay is slow, the final accuracy can be degraded~\cite{martinsson2010normalized,halko2011finding}. In order to mitigate this, a power iteration method for computing a rank-$k$ SVD approximation is presented in~\cite{halko2011finding}.}
With $\bm{A}_f$ the input matrix, \alec{standard} power iteration excluding the right multiplication by a sketching operator entails computing the product 
\begin{equation}
\bm{A}^q_f = (\bm{A}_f\bm{A}_f^T)^q \bm{A}_f,
\end{equation}
where $q$ denotes the number of power iterations. \alec{Following the formation of $\bm{A}^q_f$, its SVD is given by:}  

{\begin{align}
    \bm{A}^q_f &=  \bm{U}_{f} \bm{S}_{q,f} \bm{V}_{f}^T,\\
    \bm{S}_{q,f} &= \bm{S}_f^{2q+1} \label{eqn:pwritersingvals}.
\end{align}}

The equation (\ref{eqn:pwritersingvals}) indicates that the singular values of $\bm{A}^q_f$ are $\sigma_{f,i}^{2q+1}$, where the $\sigma_{f,i}$ are the singular values of $\bm{A}_f$. The left and right singular vectors of $\bm{A}_f$, $\bm{U}_f$ and $\bm{V}_f$, respectively, are left unchanged. This enables improvement in accuracy while preserving key geometry from the input matrix.

\alec{A significant drawback of this approach is that} it requires $2q+2$ passes over the data matrix~\cite{halko2011finding}. 
To reduce the number of passes from $2q+2$ to 1, C-PWR (Algorithm~\ref{alg:coarsespsvdpower}), a single-pass power iteration algorithm, is presented.
First, an approximation to $\bm{A}^q_f$ is formed in a single pass:
\begin{equation}
 \bm{A}^q_b =    (\tau\bm{A}_c\bm{A}_c^T)^q \bm{A}_f,
\end{equation}
where $\bm{A}_c = \bm{A}_f(:,\mathcal{I})$, and $\tau$ is chosen to scale the entries of the coarse Gram matrix $\bm{A}_c\bm{A}_c^T$ to approximate those of $\bm{A}_f\bm{A}_f^T$; \alec{the optimal value of $\tau$ will minimize $\rho(\tau)$ (see Definition~\ref{def:rho_tau} below).}

\alec{Algorithm~\ref{alg:coarsespsvdpower} is used to form the QR decomposition of the input matrix, which is then used to compute a $k$-rank SVD in the follow steps:}

\begin{align}
    \bm{A}^q_b &= \bm{Q}_b\bm{R}_b, \label{step:QRstepCPWR}\\
    \bm{B} &= \bm{Q}_b^T\bm{A}_f, \\
    \bm{B} &= \bm{U}\bm{S}\bm{V}^T, \\
    \hat{\bm{A}}_f &= \bm{Q}_b\tilde{\bm{U}}_k\tilde{\bm{S}}_k\tilde{\bm{V}}^T_k.
\end{align}
In practice, the QR decomposition in the equation (\ref{step:QRSVDcoarse}) is computed on a sketch of the matrix $\bm{A}^q_b$:
\begin{align}
    \bm{A}^q_b\bm{D} = \bm{Q}_b\bm{R}_b. \label{eqn:CPWR_sketch}
\end{align}

To improve performance, the authors of~\cite{halko2011finding} rely on re-orthonormalization in their power iteration scheme. Re-orthonormalization may also be used in C-PWR. That is, instead of explicitly computing the product $(\tau\bm{A}_c\bm{A}_c^T)^q \bm{A}_f\bm{D}$, the columns are repeatedly orthonormalized via a QR decomposition before multiplying by $\bm{A}_c$ or $\bm{A}_c^T$ (see the steps marked `Optional' in Algorithm~\ref{alg:coarsespsvdpower}). Re-orthornormalization improves the accuracy by reducing error \alec{accrued} when directly evaluating $(\tau\bm{A}_c\bm{A}_c^T)^q \bm{A}_f\bm{D}$~\cite{halko2011finding}.

\begin{remark}
When the coarse grid data matrix $\bm{A}_c = \bm{A}_f(:,\mathcal{I})$ is formed, it is necessary that $\vert \mathcal{I} \vert > k$.
\end{remark}
To analyze the error incurred using C-PWR, the following definition is provided.

\begin{definition}
\label{def:rho_tau}
Let $\rho(\tau) =  \Vert \bm{A}_f\bm{A}_f^T -  \tau\bm{A}_c\bm{A}_c^T\Vert_2$. 
\end{definition}
The following theorem provides an upper bound on the error due exclusively to C-PWR and excluding the contribution of the sketching step in the equation (\ref{eqn:CPWR_sketch}).
\begin{theorem}
\label{thm:coarsepwritr}
A rank-$k$ approximation $\hat{\bm{A}}_f$ for a fine grid data matrix $\bm{A}_f$ generated using C-PWR without sketching satisfies 
\begin{align}
    \label{eq:pwritrgenbound}
    \Vert \bm{A}_f - \hat{\bm{A}}_f \Vert_2 &\lesssim \sigma_{f,k+1} + C(\tau,q)^{1/(2q+1)}, \\
    C(\tau,q) &= q \tau^{q-1}\Vert \bm{A}_c \Vert_2^{2q-2} \Vert \bm{A}_f \Vert_2 \rho(\tau) + \rho^2(\tau),
\end{align}
where $q$ is the number of power iterations used and $\rho(\tau)$ is defined in Definition~\ref{def:rho_tau}. Moreover, the best possible approximation satisfies
\begin{align}
    \label{eq:bestpwritrapprox}
    \Vert \bm{A}_f - \hat{\bm{A}}_f \Vert_2 \lesssim  \sigma_{f,k+1} +\min_{\tau,q} C(\tau,q)^{1/(2q+1)}.
\end{align}
\end{theorem}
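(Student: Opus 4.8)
The plan is to separate $\Vert\bm{A}_f-\hat{\bm{A}}_f\Vert_2$ into the intrinsic rank-$k$ truncation error and the error incurred by constructing the range basis $\bm{Q}_b$ from the coarse power-iterated matrix $\bm{A}^q_b$ rather than the exact $\bm{A}^q_f=(\bm{A}_f\bm{A}_f^T)^q\bm{A}_f$. Writing $\hat{\bm{A}}_f=\bm{Q}_b(\bm{B})_k$ with $\bm{B}=\bm{Q}_b^T\bm{A}_f$ and $(\cdot)_k$ the best rank-$k$ truncation, I would decompose $\bm{A}_f-\hat{\bm{A}}_f=(\bm{I}-\bm{Q}_b\bm{Q}_b^T)\bm{A}_f+\bm{Q}_b(\bm{B}-(\bm{B})_k)$. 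The truncation residual satisfies $\Vert\bm{B}-(\bm{B})_k\Vert_2=\sigma_{k+1}(\bm{Q}_b^T\bm{A}_f)\le\sigma_{f,k+1}$, so the triangle inequality yields $\Vert\bm{A}_f-\hat{\bm{A}}_f\Vert_2\le\sigma_{f,k+1}+\Vert(\bm{I}-\bm{Q}_b\bm{Q}_b^T)\bm{A}_f\Vert_2$, isolating the projection error as the only nontrivial quantity.

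For this projection error I would invoke the power-iteration inequality of~\cite{halko2011finding}: for any orthogonal projector $\bm{P}$ one has $\Vert(\bm{I}-\bm{P})\bm{A}_f\Vert_2\le\Vert(\bm{I}-\bm{P})(\bm{A}_f\bm{A}_f^T)^q\bm{A}_f\Vert_2^{1/(2q+1)}$. Taking $\bm{P}=\bm{Q}_b\bm{Q}_b^T$ reduces the task to bounding $\Vert(\bm{I}-\bm{Q}_b\bm{Q}_b^T)\bm{A}^q_f\Vert_2$. Here the coarse Gramian enters: $\bm{Q}_b$ is the QR factor of $\bm{A}^q_b=\tau^q(\bm{A}_c\bm{A}_c^T)^q\bm{A}_f$, whose column space is independent of the scalar $\tau>0$, so $(\bm{I}-\bm{Q}_b\bm{Q}_b^T)\bm{A}^q_b=\bm{0}$ for every $\tau$ and therefore $\Vert(\bm{I}-\bm{Q}_b\bm{Q}_b^T)\bm{A}^q_f\Vert_2\le\Vert\bm{A}^q_f-\bm{A}^q_b\Vert_2$. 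Chaining the two steps gives $\Vert\bm{A}_f-\hat{\bm{A}}_f\Vert_2\le\sigma_{f,k+1}+\Vert\bm{A}^q_f-\bm{A}^q_b\Vert_2^{1/(2q+1)}$, which already accounts for the exponent $1/(2q+1)$; moreover, since $\tau$ is purely an analysis parameter that does not affect $\bm{Q}_b$, the bound holds for all $\tau$, legitimizing the minimization in~(\ref{eq:bestpwritrapprox}).

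It then remains to show $\Vert\bm{A}^q_f-\bm{A}^q_b\Vert_2\le C(\tau,q)$. Setting $\bm{G}_f=\bm{A}_f\bm{A}_f^T$, $\bm{G}_b=\tau\bm{A}_c\bm{A}_c^T$, and $\bm{E}=\bm{G}_f-\bm{G}_b$ so that $\Vert\bm{E}\Vert_2=\rho(\tau)$ (Definition~\ref{def:rho_tau}), I would factor $\bm{A}^q_f-\bm{A}^q_b=(\bm{G}_f^q-\bm{G}_b^q)\bm{A}_f=\bigl((\bm{G}_b+\bm{E})^q-\bm{G}_b^q\bigr)\bm{A}_f$ and expand about $\bm{G}_b$. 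The first-order telescoping sum $\sum_{j=0}^{q-1}\bm{G}_b^{j}\bm{E}\,\bm{G}_b^{q-1-j}$ has norm at most $q\Vert\bm{G}_b\Vert_2^{q-1}\rho(\tau)=q\,\tau^{q-1}\Vert\bm{A}_c\Vert_2^{2q-2}\rho(\tau)$; multiplying by $\Vert\bm{A}_f\Vert_2$ reproduces the leading term of $C(\tau,q)$, while collecting the remaining contributions of order $\bm{E}^2$ and higher supplies the $\rho^2(\tau)$ term. Estimate~(\ref{eq:bestpwritrapprox}) then follows by taking the infimum over $\tau$ and $q$, as the preceding bound is valid for each admissible pair.

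I expect the principal obstacle to be the clean control of the noncommutative power difference $\bm{G}_f^q-\bm{G}_b^q$: because $\bm{G}_f$ and $\bm{G}_b$ need not commute, $(\bm{G}_b+\bm{E})^q$ expands into a sum of noncommuting words in $\bm{G}_b$ and $\bm{E}$, and compressing these into the transparent linear-plus-quadratic form of $C(\tau,q)$ --- rather than a full degree-$q$ polynomial in $\rho(\tau)$ --- requires absorbing the subdominant powers of $\rho(\tau)$ (for instance by bounding $\Vert\bm{G}_f\Vert_2\le\Vert\bm{G}_b\Vert_2+\rho(\tau)$) into the implied constant. The $\lesssim$ appearing in the statement reflects precisely this folding of lower-order terms.
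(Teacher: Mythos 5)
Your proposal is correct and follows essentially the same route as the paper: the same split into $\sigma_{f,k+1}$ plus the projection error, the same appeal to the power-iteration inequality of Halko et al.\ to pull in the exponent $1/(2q+1)$, the same use of $(\bm{I}-\bm{Q}_b\bm{Q}_b^T)\bm{A}^q_b=\bm{0}$ to reduce to $\Vert\bm{A}^q_f-\bm{A}^q_b\Vert_2$, and the same perturbation expansion of $(\bm{G}_b+\bm{E})^q-\bm{G}_b^q$ that the paper isolates as Lemma~\ref{lem:powerlemma}. Your explicit telescoping sum and the observation that $\tau$ only rescales $\bm{A}^q_b$ and hence leaves $\bm{Q}_b$ unchanged are slightly more careful than the paper's ``binomial theorem'' shorthand, but the argument is the same.
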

\begin{proof}
See Section~\ref{sec:proofcoarsepoweriteration}.
\end{proof}

\alec{The result of Theorem~\ref{thm:coarsepwritr} demonstrates that the sub-optimality $C^{1/(2q+1)}$ depends on: $\rho(\tau)$, i.e, how well $\tau\bm{A}_c\bm{A}_c^T$ approximates $\bm{A}_f\bm{A}_f$; how large $\tau$ is, which is directly determined by how aggressively the columns of $\bm{A}_f$ are sub-sampled; and the number of power iterations $q$. }


\alec{In the present work,} sketches of the form $\bm{A}_c = \bm{A}_f(:,\mathcal{I})$ are used for approximating Gramians in \alec{the} power iteration scheme. Sketches such as well as random projections -- in particular Gaussian matrix-based Johnson-Lindenstrauss transforms -- performed worse empirically in initial tests and were consequently omitted. The theoretical analysis of this \alec{observation} is left to \alec{a} future work.

\subsection{Summary of algorithms}
\label{sec:algsummary}
\begin{table}[t]
\centering
\tabcolsep7pt\begin{tabular}{lccc}
\hline
Method & Computational complexity &  Max RAM Usage & Number of Passes\\
\hline
PROTO-TPC & $\mathcal{O}(mnn_c)$ &  $(m+n)n_c$ & 2\\
TPC-SVD~\cite{halko2011finding}  & $\mathcal{O}(mnn_c)$ & $(m+2n + n_c + 1)n_c$ & 2 \\
SPC-SVD  & $\mathcal{O}(mnn_c)$ & $(m + n + n_c + 1)n_c + \max(m,n)n_c$ &  1 \\
TPC-ID~\cite{dunton2020pass} & $\mathcal{O}(mnn_c)$ & $mn_c + k(m+n_c)$ & 2 \\
SPC-ID & $\mathcal{O}(mnn_c)$ & $(m + n + n_c + 1)n_c + \max(m,n)n_c$ & 1 \\
SPC-ID-ERR & $\mathcal{O}(mnn_c + m_c^2 n + m n_c^2)$ & $n(m_c + n_c) + m_c n_c$ & 1 \\
C-PWR & $\mathcal{O}(mnn_c + q(m^2n_c + mn_c^2))$ & $3mn_c + \max(m,n)n_c$ & 1 \\
\hline
\end{tabular}
\caption{Left column: computational complexity of the 7 algorithms described in this work. Center column: maximum RAM usage, defined as the maximum number of matrix entries required to be stored in working memory throughout execution of each algorithm. Right column: number of passes over the input matrix required by each method.}
\label{tab:algsummary}
\end{table}
The algorithms presented in this worked are summarized in Table~\ref{tab:algsummary}. In the second column from the left, the computational complexity of each method is provided. PROTO-TPC, TPC-SVD, SPC-SVD, TPC-ID, and SPC-ID (Algorithms~\ref{alg:protosvd},~\ref{alg:coarsetpsvd},~\ref{alg:coarsespsvd},~\ref{alg:subid}, and~\ref{alg:spcid}, respectively), all have complexities which depend asymptotically on $mnn_c$ due to the sketching step(s) in each method. SPC-ID-ERR (Algorithm~\ref{alg:onlineerrestimatespcid}) has complexity similar to that of SPC-ID, though slightly large due to the formation of the approximate Gram matrices. C-PWR (Algorithm~\ref{alg:coarsespsvdpower}) has the same leading term; its complexity also depends on the \alec{number $q$ of power iterations.} 

Of the single-pass methods, C-PWR requires the most RAM due to the storage of sketch matrices for power iteration, sketching, and orthonormalization. SPC-SVD and SPC-ID are more RAM efficient if $n < 2m$. Finally, the online error estimation procedure SPC-ID-ERR requires the least RAM, but in practice is meant to be run simultaneously with SPC-ID.

\section{Numerical experiments: SPC-SVD}
\label{sec:numericalexperiments}
In the following two sections, the error and wall-clock runtime of TPC-SVD (Algorithm~\ref{alg:coarsetpsvd}), SPC-SVD (Algorithm~\ref{alg:coarsespsvd}), TPC-ID (Algorithm~\ref{alg:subid}), SPC-ID (Algorithm~\ref{alg:spcid}), and C-PWR (Algorithm~\ref{alg:coarsespsvdpower}) are measured. All results are generated in MATLAB using a conventional laptop with an Intel i7 @1.80GHz CPU and 16 GB of RAM. 

Approximation error for the $k$-rank matrix \alec{$\hat{\bm{A}}_{f}$} is reported in terms of the Relative Frobenius Error: 
\begin{equation}
\label{eq:relativefrobeniuserror}
    \text{Relative Frobenius Error} = \frac{\Vert \bm{A}_f - \alec{\hat{\bm{A}}_{f}} \Vert_F}{\Vert \bm{A}_f \Vert_F}.
\end{equation}
For low-rank approximations generated using C-PWR (Algorithm~\ref{alg:coarsespsvdpower}), errors are reported in terms of a metric referred to herein as the Max Ratio of Error to Oracle (MREO). Let $\bm{A}_{f,k}$ be the best rank-$k$ approximation of the matrix $\bm{A}_f$ given by the truncated rank-$k$ SVD, i.e., the oracle solution. Then, 
\begin{subequations}
\begin{align}
\label{eq:maximumsuboptimality}
    \text{MREO} &= \max_{\text{all trials}} \frac{\Vert \bm{A}_f - \alec{\hat{\bm{A}}_{f}}\Vert_F}{\Vert \bm{A}_f - \bm{A}_{f,k} \Vert_F}, \\
\label{eq:eckartyoung}
    \Vert \bm{A}_f - \bm{A}_{f,k} \Vert_F &= \left(\sum_{i=k+1}^{\min(m,n)} \sigma_{f,i}^2\right)^{1/2},
\end{align}
\end{subequations}
where (\ref{eq:eckartyoung}) is the Eckart-Young theorem~\cite{eckart1936approximation}. The MREO (\ref{eq:maximumsuboptimality}) is a measure of the robustness of a randomized algorithm. A low value indicates that C-PWR improves low-rank approximations generated using randomized schemes; this is precisely the purpose of power iteration as presented in~\cite{martinsson2010normalized,halko2011finding}.

In this section, hybrid deterministic-randomized sketches, \alec{i.e., sketches with a deterministic dimension reduction step followed by a random projection step}, are implemented. When randomized sketches are used in the test cases involving C-PWR, they are composed with a direct injection deterministic sketch of the fine grid data (see the `Optional' random sketching steps in Algorithms~\ref{alg:coarsetpsvd},~\ref{alg:coarsespsvd},~\ref{alg:subid}, and~\ref{alg:spcid}). SPC-SVD+C-PWR and TPC-SVD+C-PWR indicate that C-PWR is used in conjunction with SPC-SVD or TPC-SVD, respectively. Finally, the coarsening factor of a deterministic sketch is defined to be
\begin{equation}
\label{eq:coarseningfactor}
    \text{Coarsening Factor} = n/n_c,
\end{equation}
where $n$ is the size of the fine grid snapshot and $n_c < n$ is the size of the corresponding coarse grid snapshot. In C-PWR test cases, the oversampling parameter is set equal to the target rank, while in hybrid deterministic-randomized sketches, the oversampling parameter is set to $10$. One coarse grid power iteration is used in all cases; the MREO is evaluated over $100$ independent trials. In the tests involving C-PWR, the fully randomized SVD is not used as a benchmark because hybrid deterministic-randomized sketches were shown to perform as well for low target rank values.
\begin{remark}
In some of the following numerical experiments, single-pass algorithms are slower than their two-pass counterparts in terms of wall-clock runtime. It is critical to emphasize that in scenarios where a second pass entails a second PDE solve or loading a prohibitively large matrix from disk to RAM, these differences in wall-clock runtime become negligible.
\end{remark}
\subsection{NACA-4412 airfoil \alec{data}}
\begin{figure}[t]
\centering
\includegraphics[width=0.5\linewidth]{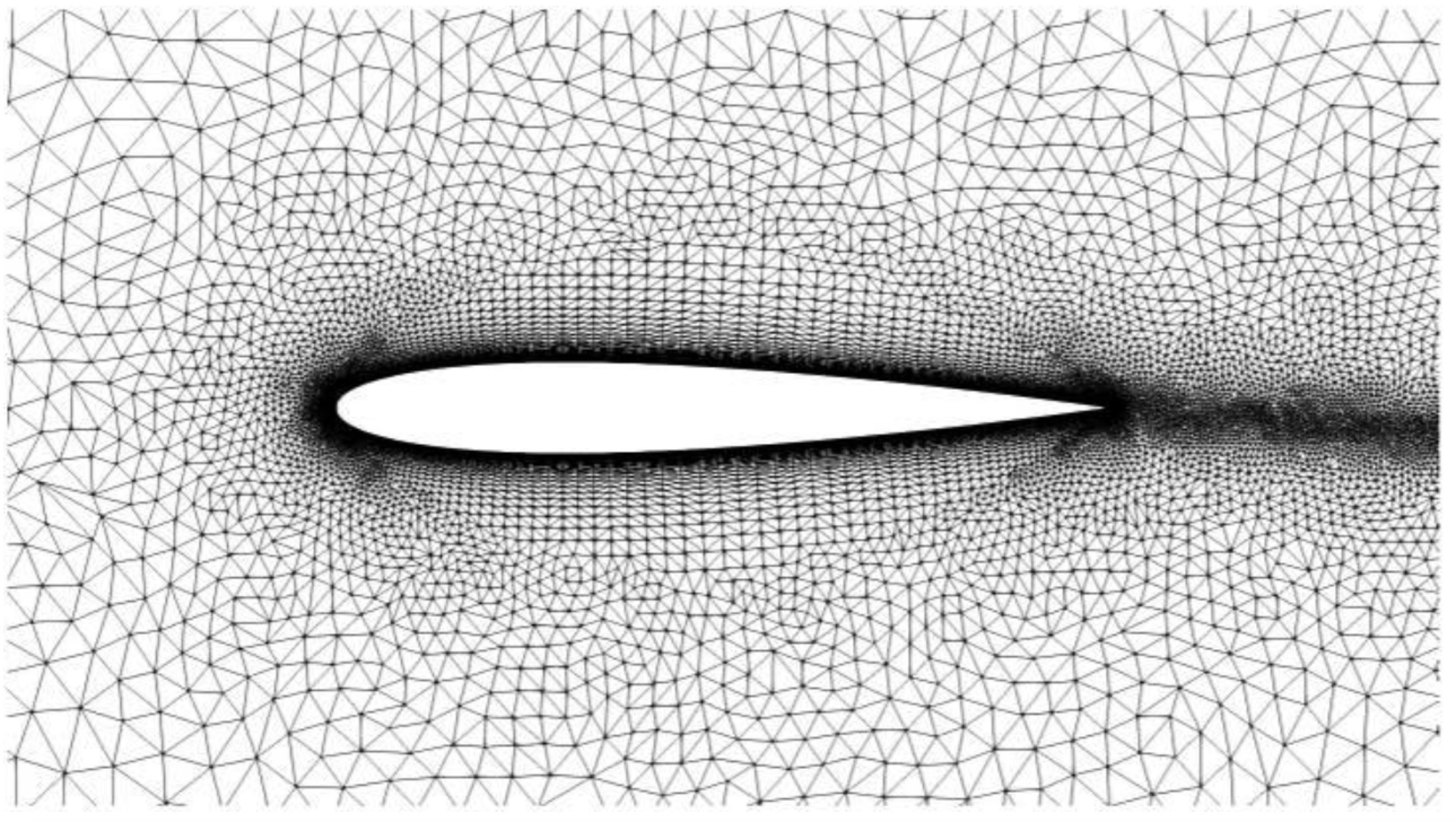}
\caption{Symmetric NACA 0012 mesh used for fine grid Reynolds Averaged Numerical Simulation to model pressure coefficient response of a two dimensional NACA 4412 airfoil in a steady, incompressible flow with Reynolds number $1.52 \times 10^6$~\cite{skinner2019reduced}.}
 \label{fig:NACA_airfoil}
\end{figure}
\begin{figure}[t]
\centering
\includegraphics[width=0.49\linewidth]{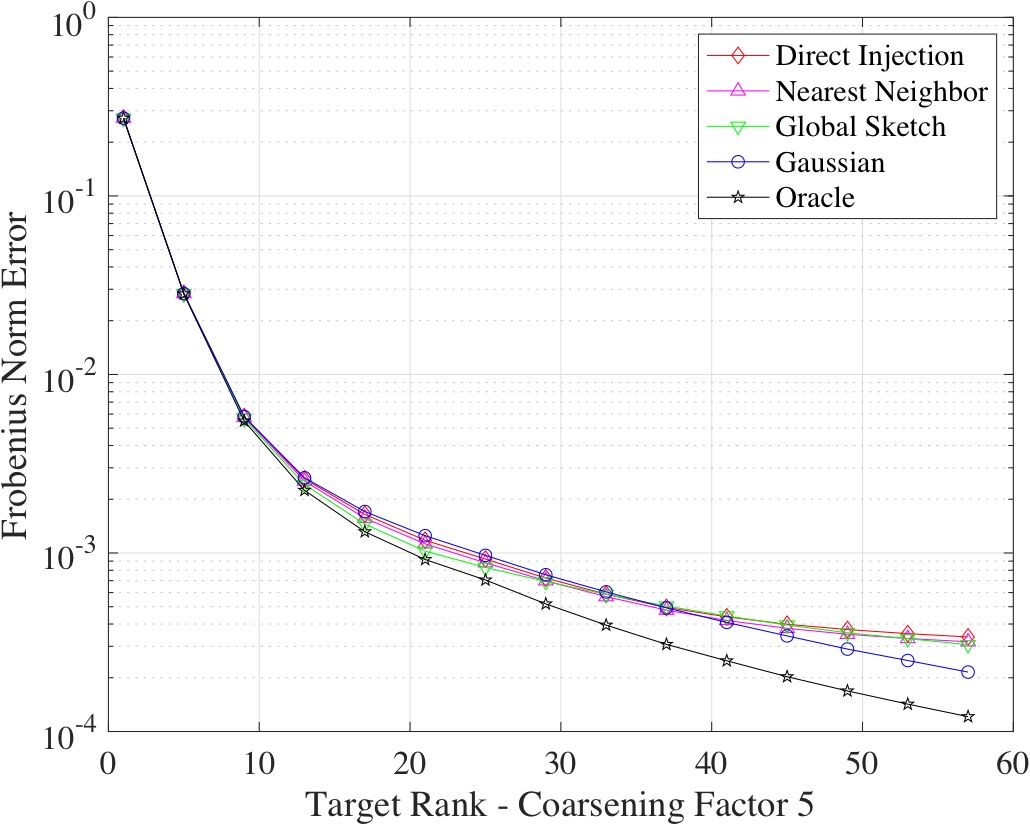}
\includegraphics[width=0.48\linewidth]{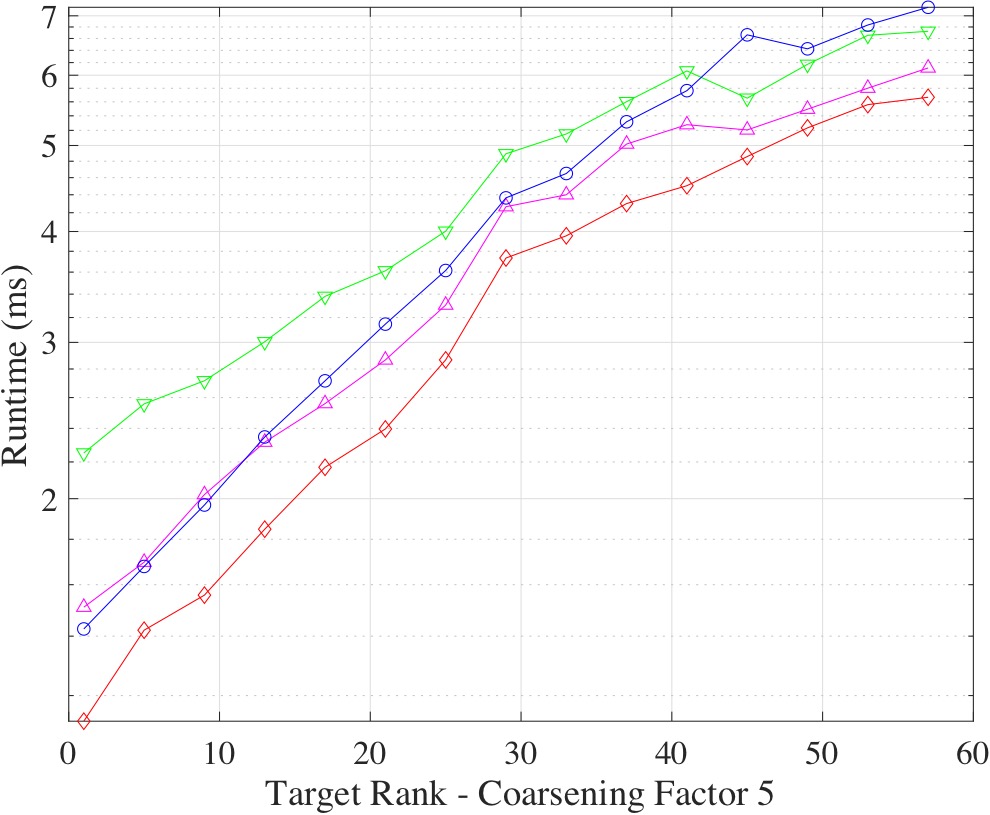}
\caption{Left: Relative Frobenius error of three proposed sketches used in SPC-SVD (Algorithm~\ref{alg:coarsespsvd}) and a simplified version of the single-pass algorithm from~\cite{yu2017single} compared against the optimal error given by the Eckart-Young theorem on the NACA-4412 airfoil dataset. Right: Sketch times of the four methods.}
 \label{fig:ranks_AIAA_sketches}
\end{figure}
\begin{figure}[t]
\centering
\includegraphics[width=0.49\linewidth]{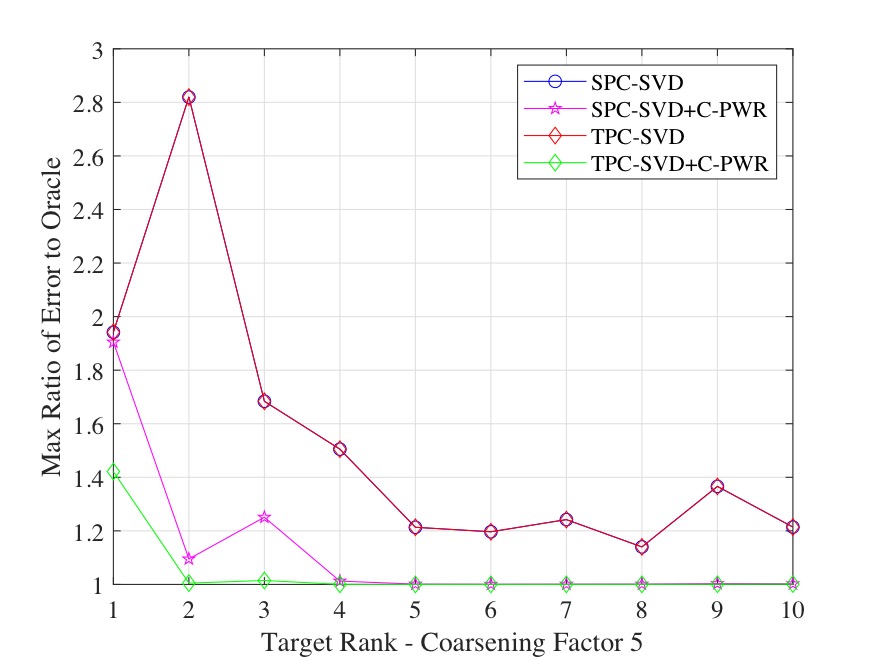}
\includegraphics[width=0.49\linewidth]{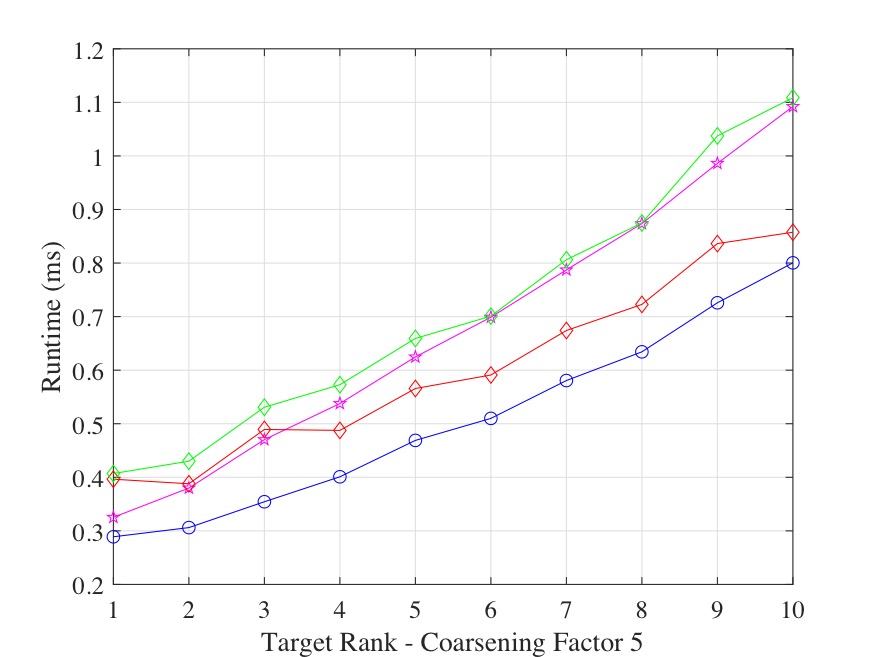}
\includegraphics[width=0.49\linewidth]{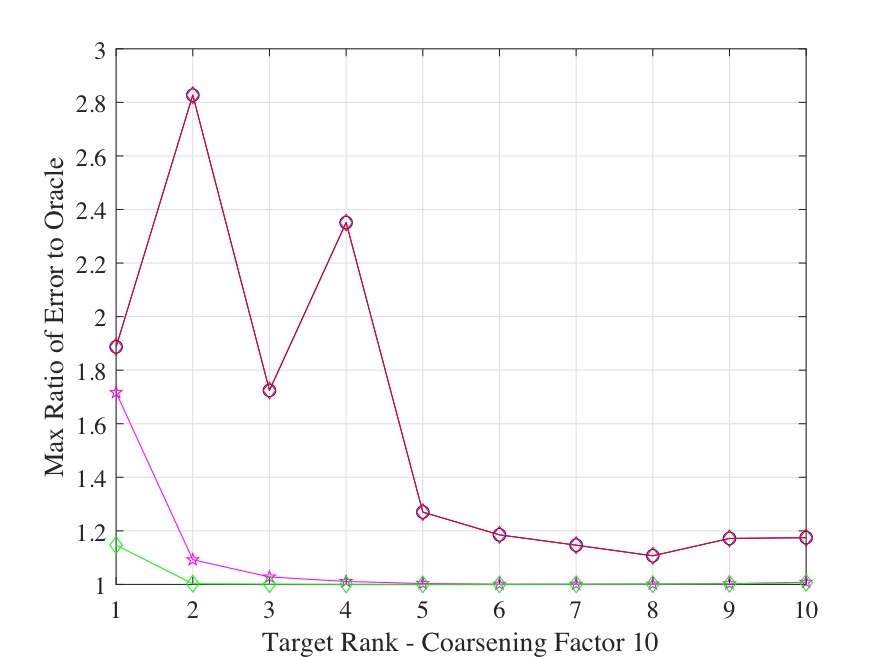}
\includegraphics[width=0.49\linewidth]{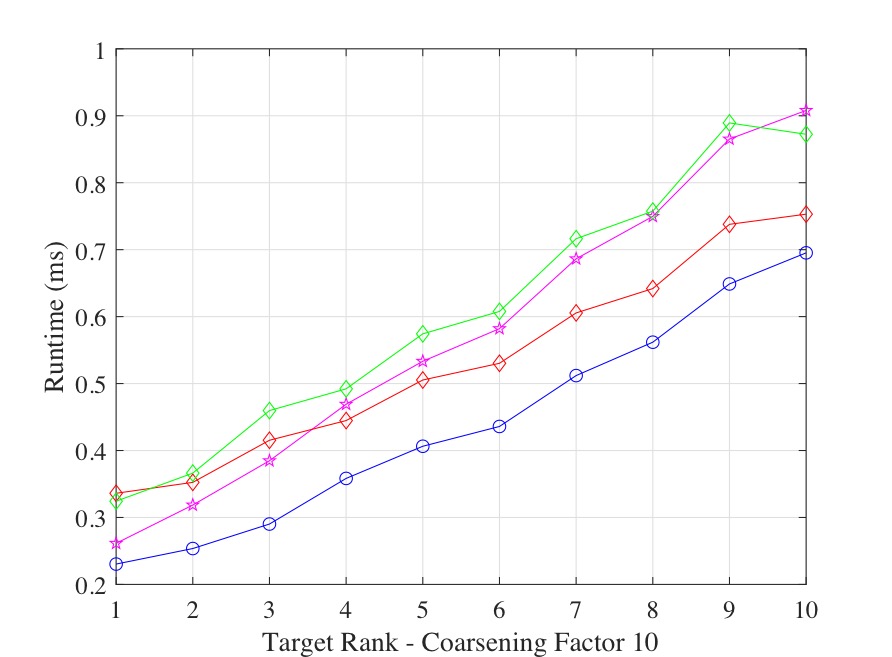}
\caption{Left: Maximum ratio over 100 independent trials of Frobenius error of schemes relative to the lower bound given by the Eckart-Young theorem \alec{on the NACA-4412 airfoil dataset}. Right: Average runtimes over the same 100 trials.}
 \label{fig:ranks_AIAA}
\end{figure}

The first dataset is generated by measuring the coefficient of pressure $C_p$ along $768$ points on the surface of a NACA-4412 airfoil for $500$ steady-state solutions to the Navier Stokes equations solved using a grid-independent Spalart-Allmaras simulation~\cite{spalart1992one}, yielding a data matrix of size $500 \times 768$ (Figure~\ref{fig:NACA_airfoil}). Each row corresponds to a steady-state solution for a combination of four stochastic parameters: the angle of attack, maximum camber, location of maximum camber, and maximum thickness~\cite{skinner2017evaluation,skinner2019reduced}.

Direct injection, nearest-neighbors with weights $\lbrack 1/4 \hspace{6pt} 1/2 \hspace{6pt} 1/4 \rbrack$ and $d=1$, global sketch, and a dense Gaussian sketch \alec{with oversampling parameter 10} (used in~\cite{yu2017single}), are compared to the oracle solution. For the first test, reported in Figure~\ref{fig:ranks_AIAA_sketches}, the coarsening factor is set to 5. In the left panel the relative error is \alec{rather similar} for all sketches. In the right-panel, the average sketch time for each method is presented. The slowest of the schemes is the global sketch; for larger ranks direct injection is the fastest of the four methods, followed by nearest neighbors and Gaussian sketch. This reflects the complexity analysis presented in Section~\ref{sec:algsummary}.

C-PWR (Algorithm~\ref{alg:coarsespsvdpower}) is incorporated into TPC-SVD (Algorithm~\ref{alg:coarsetpsvd}) and SPC-SVD (Algorithm~\ref{alg:coarsespsvd}), with results reported in Figure~\ref{fig:ranks_AIAA}.  We first consider the case with coarsening factor set to $5$ (top two panels of Figure~\ref{fig:ranks_AIAA}). For target rank 2, applying C-PWR improves the MREO by a factor of almost 3 for TPC-SVD and SPC-SVD. Moreover, for all target rank values greater than 1, TPC-SVD+C-PWR and SPC-SVD+C-PWR achieve near-optimal rank-$k$ approximations with one power iteration.
In the top right panel of Figure~\ref{fig:ranks_AIAA}, the average runtime over the same $100$ independent trials is reported. The slowest method of the four is TPC-SVD+C-PWR, followed by SPC-SVD+C-PWR, TPC-SVD, and finally SPC-SVD. These results are not surprising; adding a power iteration step and second pass should increase runtime.  

In the final numerical experiment for the NACA-4412 airfoil dataset, the coarsening factor is set to 10 with the same experimental setup as before. In the bottom left panel of Figure~\ref{fig:ranks_AIAA}, the MREO of the four algorithms for target ranks between 1 and 10 is provided.  In TPC-SVD and SPC-SVD, applying one iteration of C-PWR reduces the MREO by a factor of almost 3 for a target rank of 2; for all ranks greater than 1, TPC-SVD+C-PWR and SPC-SVD+C-PWR achieve near-optimal rank-$k$ approximations. The runtimes shown in the bottom right panel of Figure~\ref{fig:ranks_AIAA} are slightly faster than those reported in the top right panel of Figure~\ref{fig:ranks_AIAA} due to the doubling of the coarsening factor.
\begin{figure}[t]
  \centering
  \includegraphics[width=0.51\textwidth]{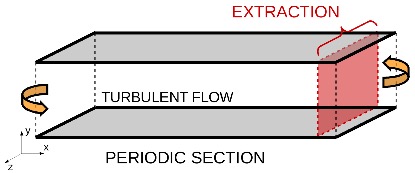}
  \includegraphics[width=0.47\textwidth]{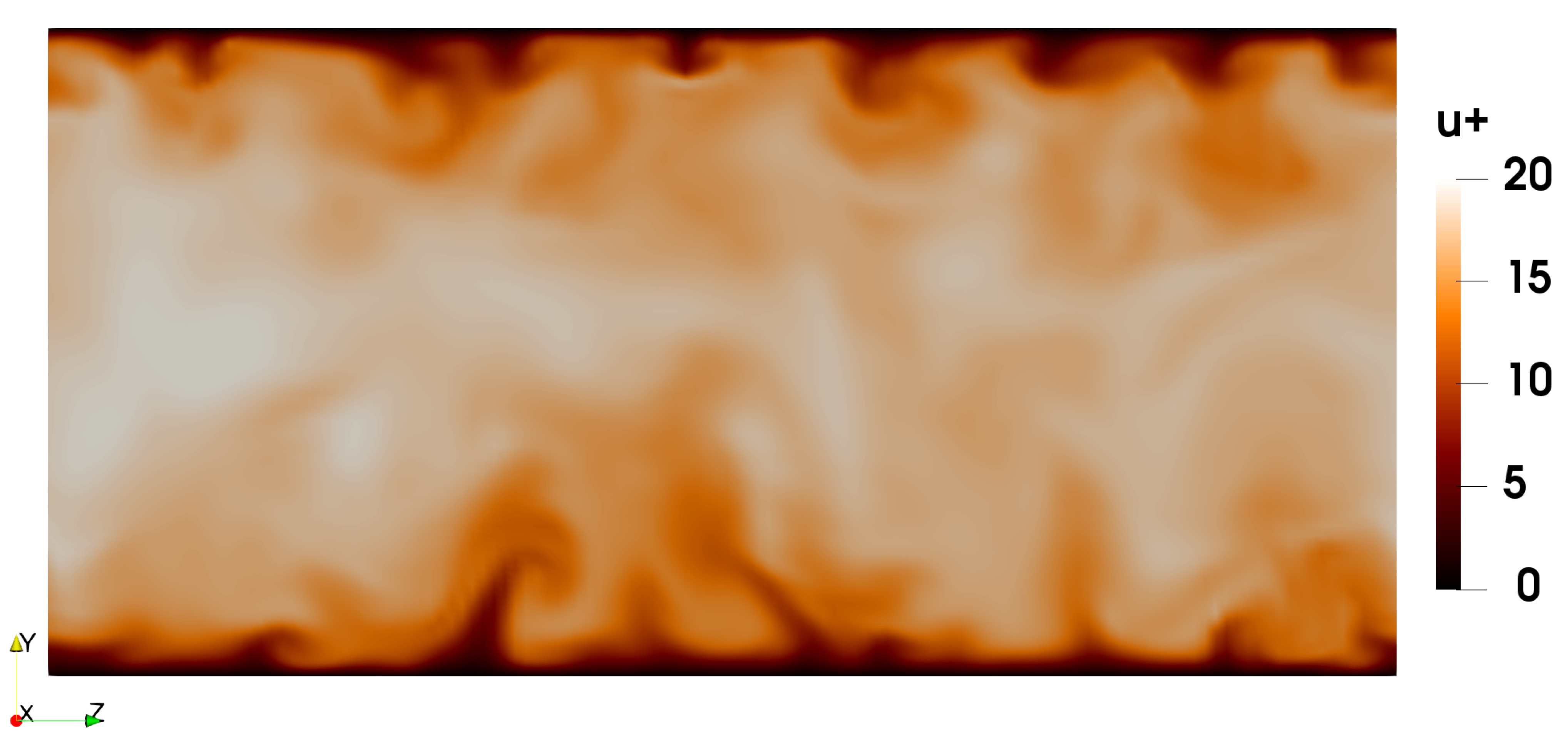}
  \caption{Left: Data extraction computational setup for compression and reconstruction. Right: Stream-wise velocity (wall units) on the extraction plane (instantaneous, spanwise $y$-$z$ plane snapshot).}	\label{fig:data_capture}
\end{figure}

\subsection{Turbulent channel flow \alec{data}}

\begin{figure}[t]
\centering
\includegraphics[width=0.49\linewidth]{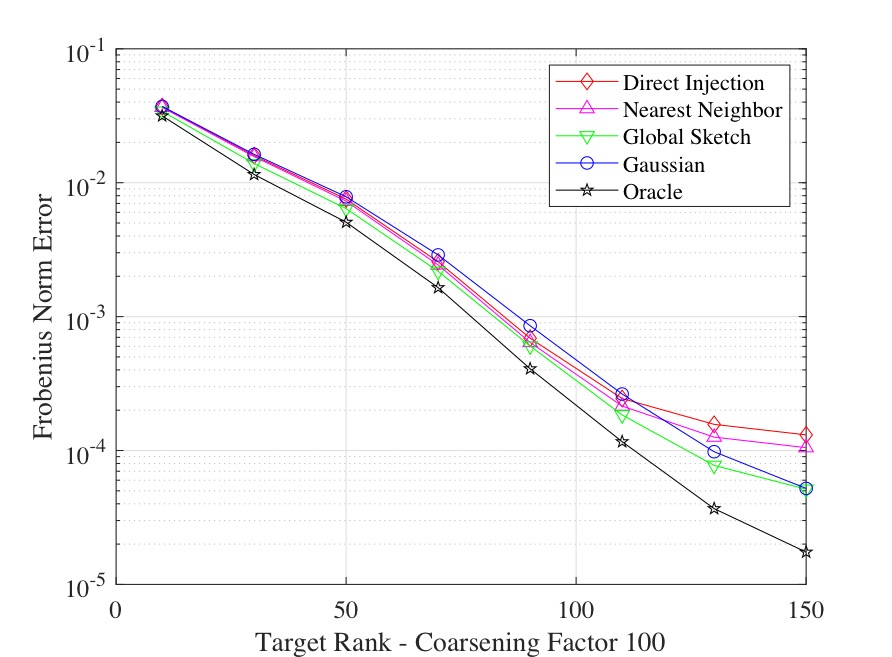}
\includegraphics[width=0.49\linewidth]{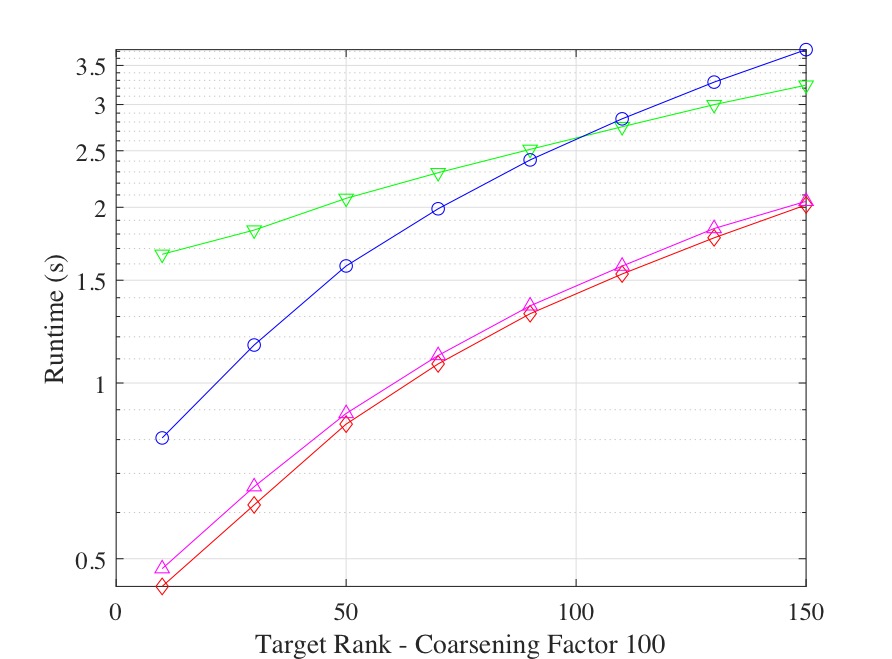}
\caption{Left: Relative Frobenius error of three proposed sketches used in SPC-SVD (Algorithm~\ref{alg:coarsespsvd}) and the Gaussian sketch from the single-pass algorithm from~\cite{yu2017single} compared against the optimal error given by the Eckart-Young theorem on the turbulent channel flow dataset. Right: Sketch times of the four methods (one data point removed as an outlier).}
 \label{fig:ranks_Uvel}
\end{figure}

\begin{figure}[t]
\centering
\includegraphics[width=0.49\linewidth]{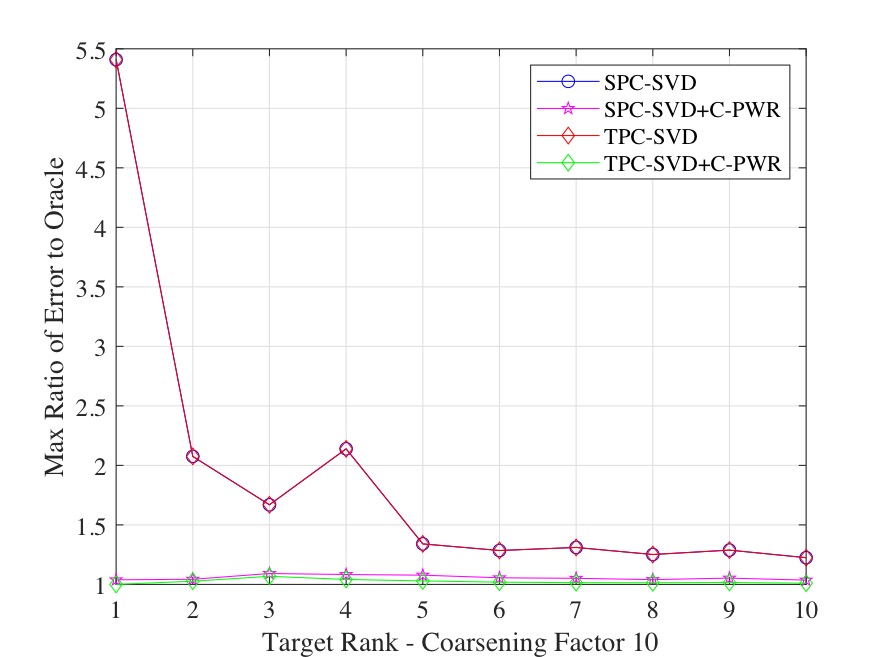}
\includegraphics[width=0.49\linewidth]{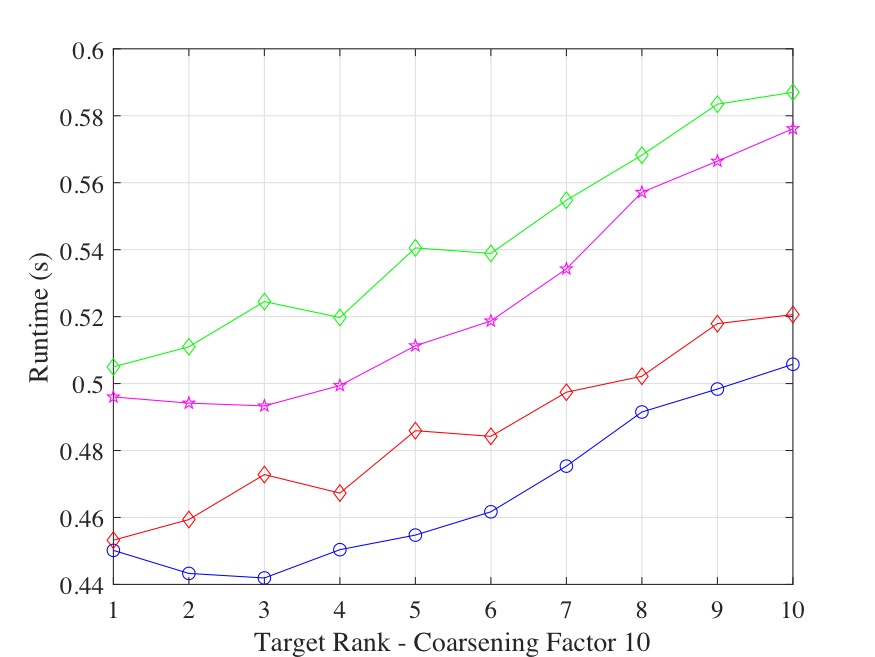}
\includegraphics[width=0.49\linewidth]{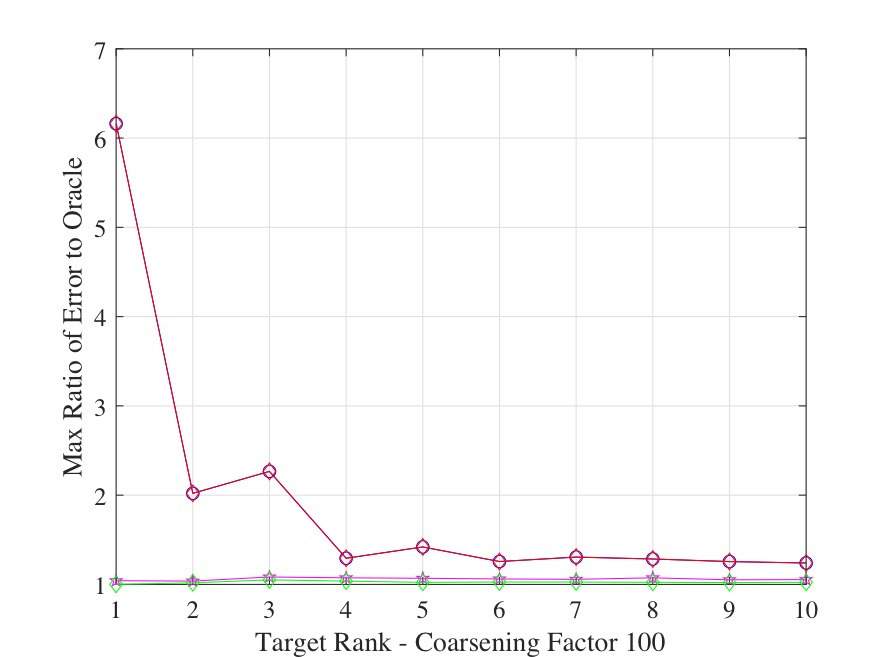}
\includegraphics[width=0.49\linewidth]{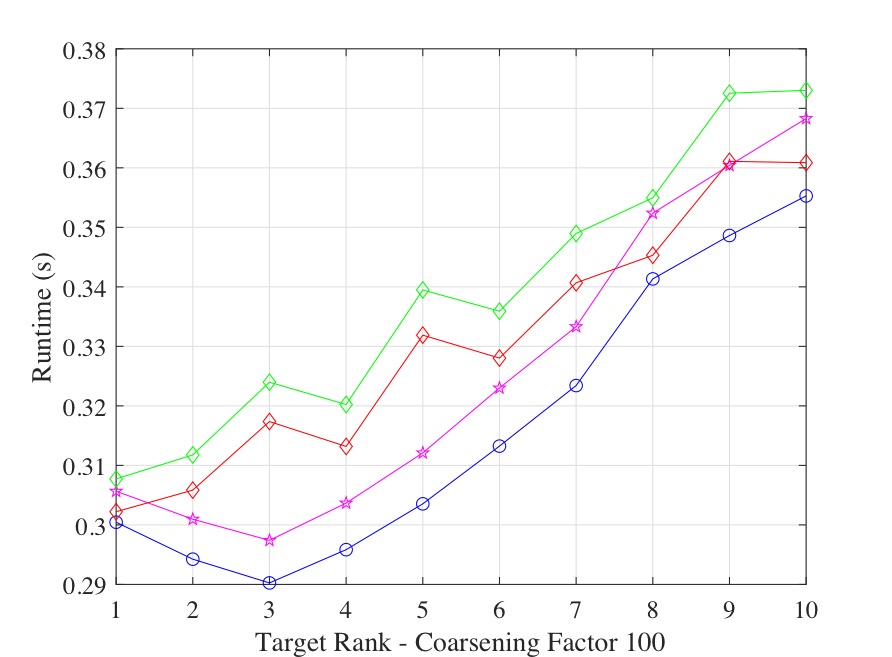}
\caption{Left: Maximum ratio over 100 independent trials of Frobenius error of schemes relative to the lower bound given by the Eckart-Young theorem \alec{on the turbulent channel flow dataset}. Right: Average runtimes over the same 100 trials.}
 \label{fig:ranks_channel}
\end{figure}

The second dataset is extracted from the direct numerical simulation (DNS) of \alec{a} wall-bounded particle-laden turbulent flow at frictional Reynolds number $Re_{\tau} = 180$ using the Soleil-MPI low-Mach-number flow solver~\cite{esmaily2018scalable}. After the system has reached turbulent steady-state conditions, the stream-wise $u$ velocity field is collected on a $130 \times 130$ grid at the outlet for an entire flow through time as shown in Figure~\ref{fig:data_capture}, thereby generating $25100$ snapshots corresponding to a $25100 \times 16900$ data matrix~\cite{moser1999direct,dunton2020pass}.

The first test demonstrates errors incurred in SPC-SVD using coarsening factors of 100 for: (1) direct injection, (2) nearest-neighbors with weights $\lbrack 1/4 \hspace{6pt} 1/2 \hspace{6pt} 1/4 \rbrack$ and $d=1$, (3) global sketch, (4) a dense Gaussian matrix. In the left panel of Figure~\ref{fig:ranks_Uvel}, all four sketches are shown to perform similarly relative to the oracle error (\ref{eq:eckartyoung}) for smaller target rank values. As the target rank increases beyond 100, the global sketch and dense Gaussian sketch outperform the direct injection and nearest-neighbor sketches. This result is unsurprising; global sketch uses more fine grid data than all other deterministic sketches.

The right panel of Figure~\ref{fig:ranks_Uvel} displays the sketch times of the four sketches. Recalling Table~\ref{tab:sketches}, Gaussian and global sketch have the asymptotically largest complexity of the four sketches. This is reflected empirically; as the rank is increased, dense Gaussian sketch is the slowest of the four, while global sketch is the second worst. Further, nearest neighbors and direct injection are the two fastest sketches.

TPC-SVD+C-PWR and SPC-SVD+C-PWR are now compared to TPC-SVD and SPC-SVD. In the first test, the coarsening factor is set to 10, and the performances of the four algorithms are shown in Figure~\ref{fig:ranks_channel}. In the top left panel of Figure~\ref{fig:ranks_channel}, the MREO over $100$ independent \alec{trials} per target rank value is provided. When $k=1$, a single coarse grid power iteration ($q=1$ in Algorithm~\ref{alg:coarsespsvdpower}) reduces the MREO from over 5 to nearly 1 for both SPC-SVD and TPC-SVD. For all target rank values, approximations generated without C-PWR display noticeable sub-optimality; those incorporating C-PWR do not. The top right panel of Figure~\ref{fig:ranks_channel} displays the average runtime of the four methods. SPC-SVD is the fastest of the four methods, followed by TPC-SVD, SPC-SVD+C-PWR, and TPC-SVD+C-PWR. 

In the second test (results displayed in the bottom two panels of Figure~\ref{fig:ranks_channel}), the coarsening factor is increased to $100$.  \alec{Despite a larger coarsening factor}, C-PWR again improves the rank-$1$ approximation by a factor close to 6 for TPC-SVD and SPC-SVD; both methods achieve near optimal error for all trials over all target rank values. In the bottom right panel of Figure~\ref{fig:ranks_channel}, the average runtime of the four methods over the same $100$ trials is provided. TPC-SVD and TPC-SVD+C-PWR are slightly slower than SPC-SVD and SPC-SVD+C-PWR for most target rank values. Incorporating C-PWR into the existing methods with sufficiently large coarsening factor introduces negligible computational cost.

\section{Numerical experiments: SPC-ID}
In this section SPC-ID (Algorithm~\ref{alg:spcid}) and SPC-ID+C-PWR are compared to their two-pass counterparts TPC-ID (Algorithm~\ref{alg:subid})~\cite{dunton2020pass} and TPC-ID+C-PWR using the same two datasets as in Section~\ref{sec:numericalexperiments}. 
Re-orthonormalization (see optional steps in Algorithm~\ref{alg:spcid}) is not used in accordance with the power iteration ID algorithm presented in~\cite{martinsson2019randomized}. 

\label{sec:numericalexperimentsID}
\subsection{NACA-4412 airfoil \alec{data}}
\begin{figure}[t]
\centering
\includegraphics[width=0.49\linewidth]{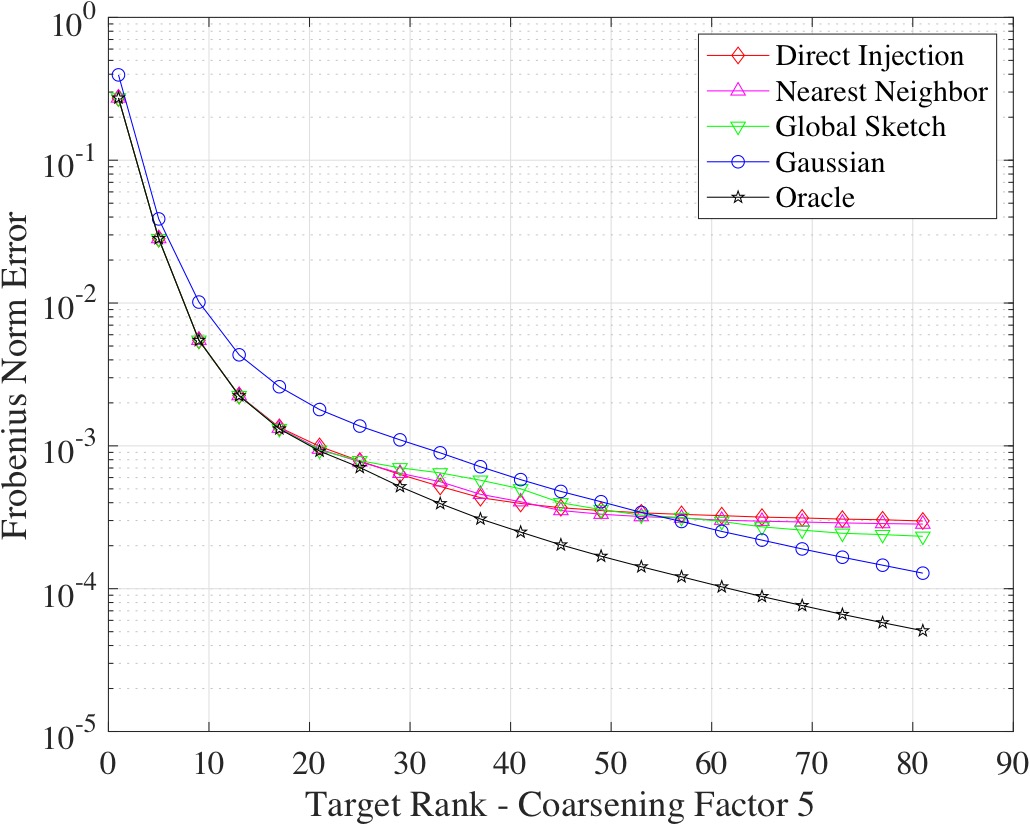}
\includegraphics[width=0.48\linewidth]{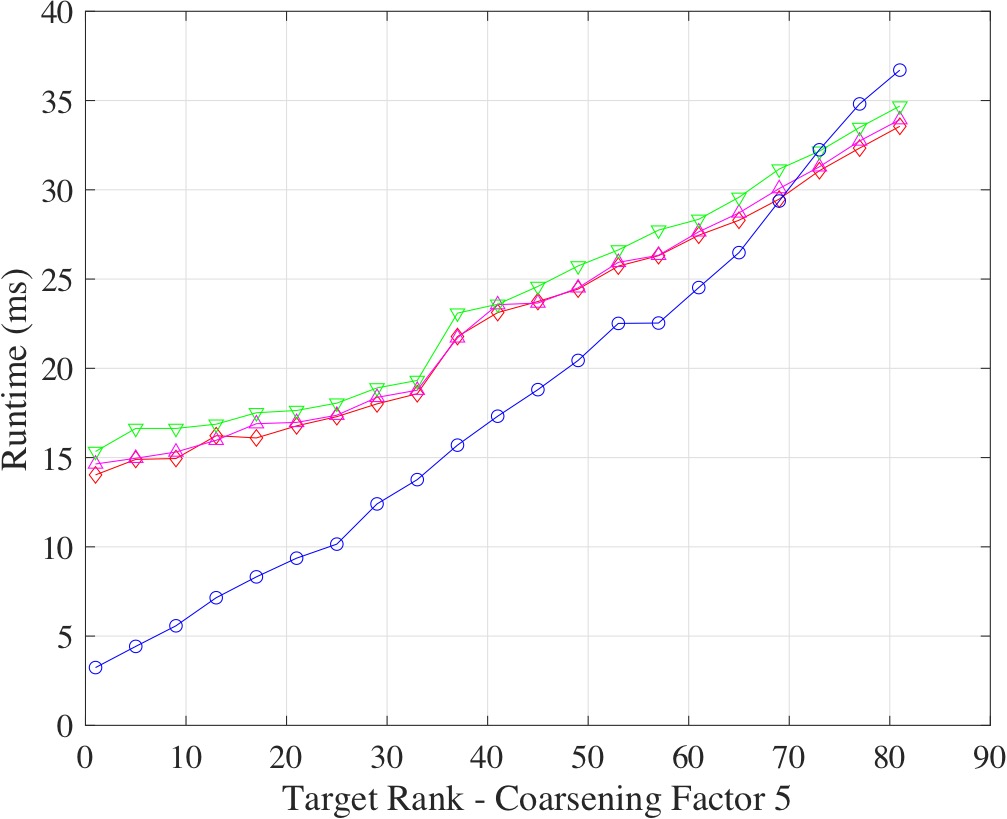}
\caption{Left: Relative Frobenius error of three proposed sketches used in SPC-ID (Algorithm~\ref{alg:spcid}) compared against the optimal error given by the Eckart-Young theorem on the NACA-4412 airfoil dataset. Right: Sketch times of the four methods.}
 \label{fig:ranks_AIAA_sketches_ID}
\end{figure}
\begin{figure}[t]
\centering
\includegraphics[width=0.49\linewidth]{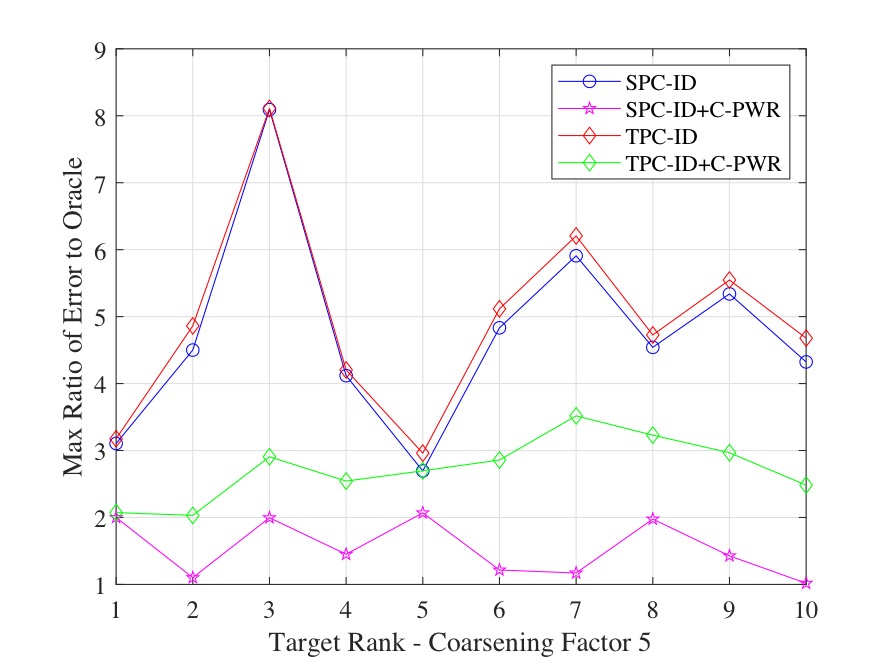}
\includegraphics[width=0.49\linewidth]{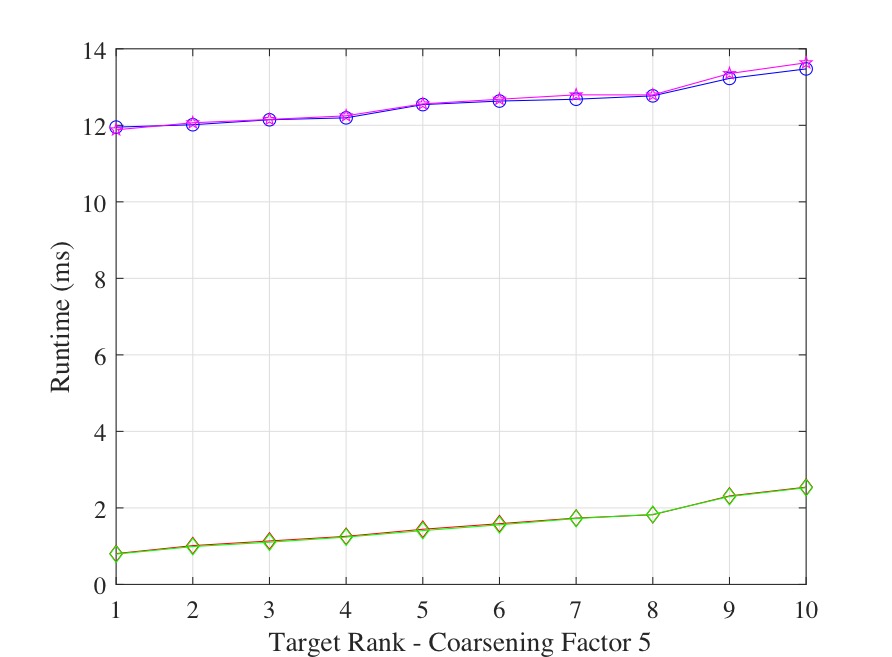}
\includegraphics[width=0.49\linewidth]{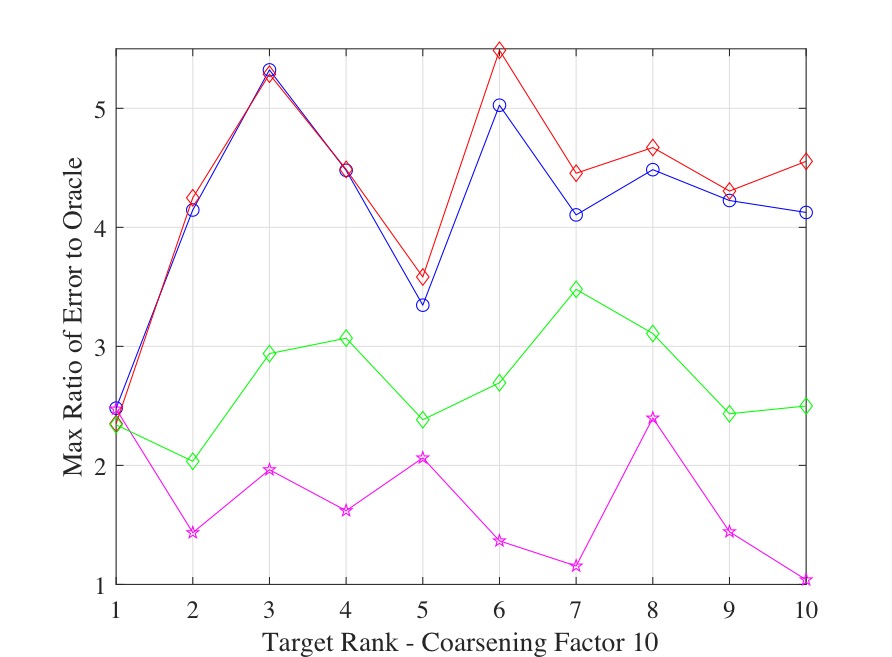}
\includegraphics[width=0.49\linewidth]{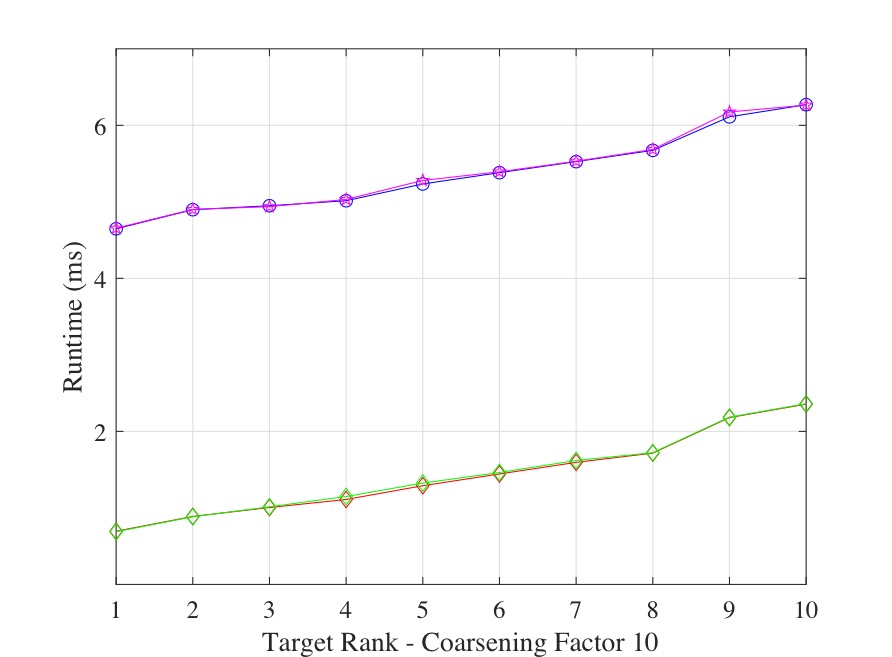}
\caption{Left: Maximum ratio over 100 independent trials of Frobenius error of schemes relative to the lower bound given by the Eckart-Young theorem \alec{on the NACA-4412 airfoil dataset}. Right: Average runtimes over the same 100 trials.}
 \label{fig:ranks_AIAA_ID}
\end{figure}
In the NACA airfoil test case, all three deterministic sketches perform roughly as well as one another, and noticeably better than the dense Gaussian sketch in terms of relative Frobenius error. \alec{Moreover, the deterministic approaches match the oracle solution for target ranks less than or equal to 20. The deterministic approaches are slower than the Gaussian sketch by less than an order of magnitude (see Figure~\ref{fig:ranks_AIAA_sketches_ID}).} However, the deterministic sketches have smaller slope than the Gaussian sketch, reflecting their superior computational complexity. In the top left panel of Figure~\ref{fig:ranks_AIAA_ID}, SPC-ID, TPC-ID, SPC-ID+C-PWR, and TPC-ID+C-PWR are compared. SPC-ID and TPC-ID achieve similar MREO for all target rank values. Applying one coarse grid power iteration with coarsening factor $n/n_c = 5$ gives significant error reduction for both approaches, but yields noticeably better results for SPC-ID+C-PWR than TPC-ID+C-PWR. 

Across all target rank values, TPC-ID is less accurate than SPC-ID. This is not surprising; although TPC-ID is a two-pass algorithm whereas SPC-ID is single-pass, Theorem~\ref{thm:SPCIDbound} when compared to Theorem 2.1 of~\cite{dunton2020pass} suggests the methods should be comparable in accuracy.

As seen in the top right panel of Figure~\ref{fig:ranks_AIAA_ID}, SPC-ID and SPC-ID+C-PWR are roughly an order of magnitude slower than TPC-ID and TPC-ID+C-PWR. This can be attributed to the computational expense of computing the lifting operator. To ameliorate this, one may construct the lifting operator in parallel to the ID approximation of the coarse grid data.

In the bottom left panel of Figure~\ref{fig:ranks_AIAA_ID}, the coarsening factor is increased from $n/n_c =5$ to 10. Across all target rank values from 1 to 10 for 100 independent trials, SPC-ID and TPC-ID achieve similar MREO. Power iteration is beneficial in all test cases; for target rank 6, one coarse grid power iteration reduces the MREO of TPC-ID from over 5 to almost 2.5, while reducing that of SPC-ID from roughly 5 to almost 1. For target rank values greater than 1, SPC-ID+C-PWR is the most accurate method, followed by TPC-ID+C-PWR. In the bottom right panel of Figure~\ref{fig:ranks_AIAA_ID} the same trend as in top right panel is observed; SPC-ID and SPC-ID+C-PWR are roughly half an order of magnitude slower than TPC-ID and TPC-ID+C-PWR.

\subsection{Turbulent channel flow \alec{data}}
\begin{figure}[t]
\centering
\includegraphics[width=0.49\linewidth]{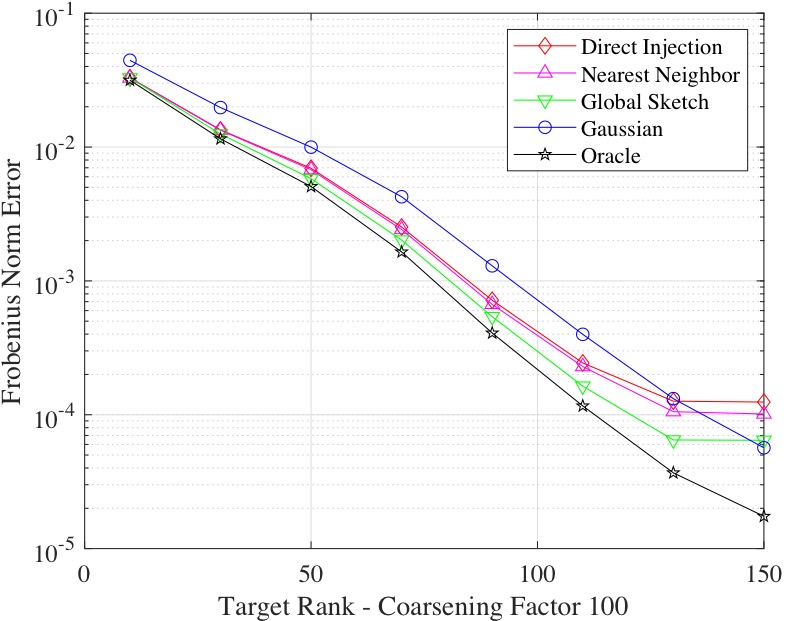}
\includegraphics[width=0.49\linewidth]{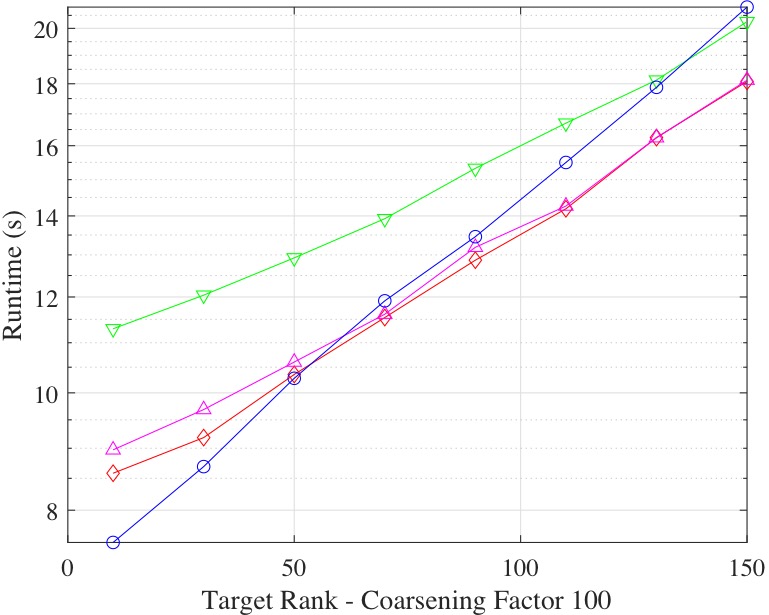}
\caption{Left: Relative Frobenius error of three proposed sketches used in SPC-ID (Algorithm~\ref{alg:coarsespsvd}) and a simplified version of the single-pass algorithm from~\cite{yu2017single} compared against the optimal error given by the Eckart-Young theorem on the turbulent channel flow dataset. Right: Sketch times of the four methods.}
 \label{fig:ranks_ChanFlow_sketches}
\end{figure}
\begin{figure}[t]
\centering
\includegraphics[width=0.49\linewidth]{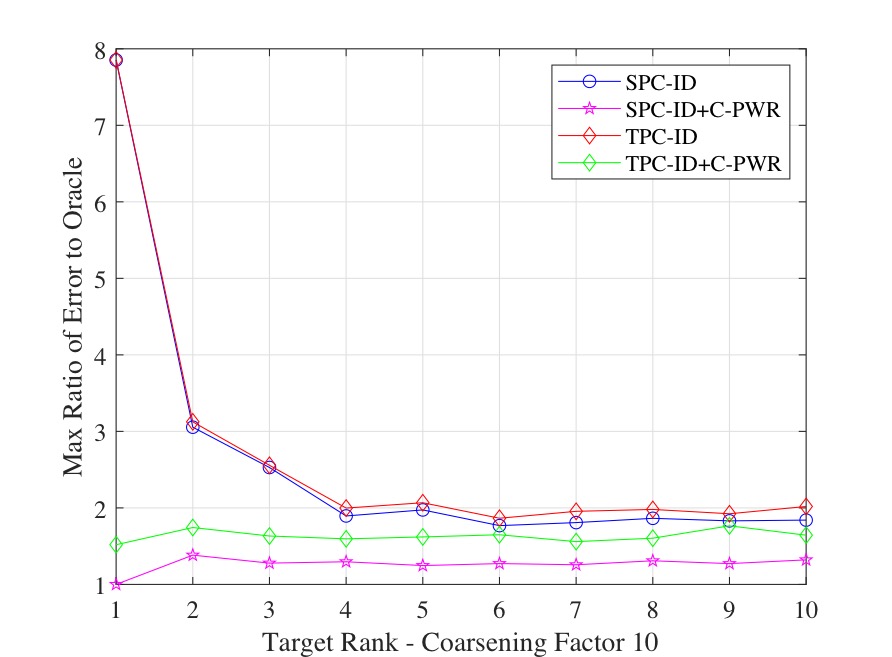}
\includegraphics[width=0.49\linewidth]{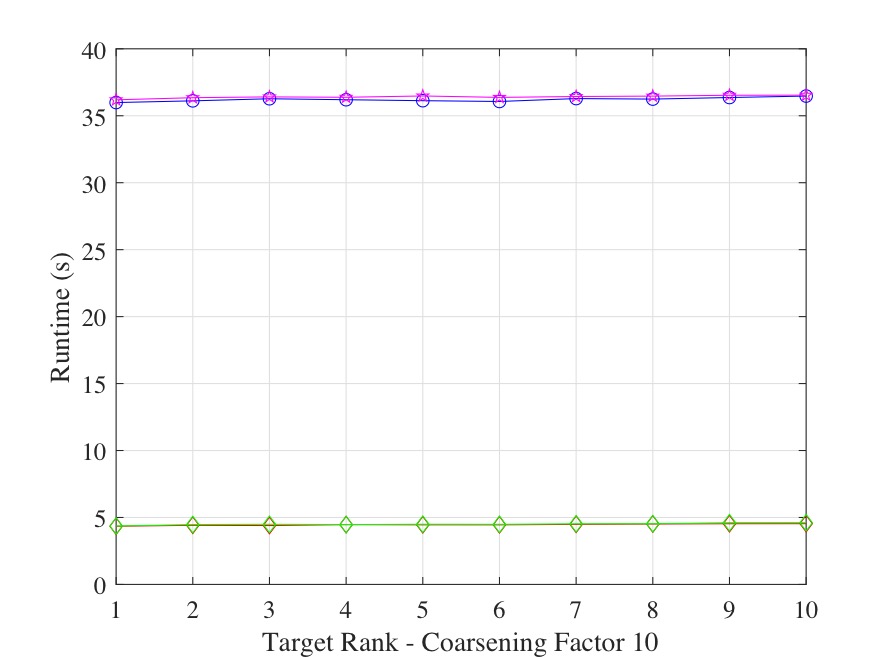}
\includegraphics[width=0.49\linewidth]{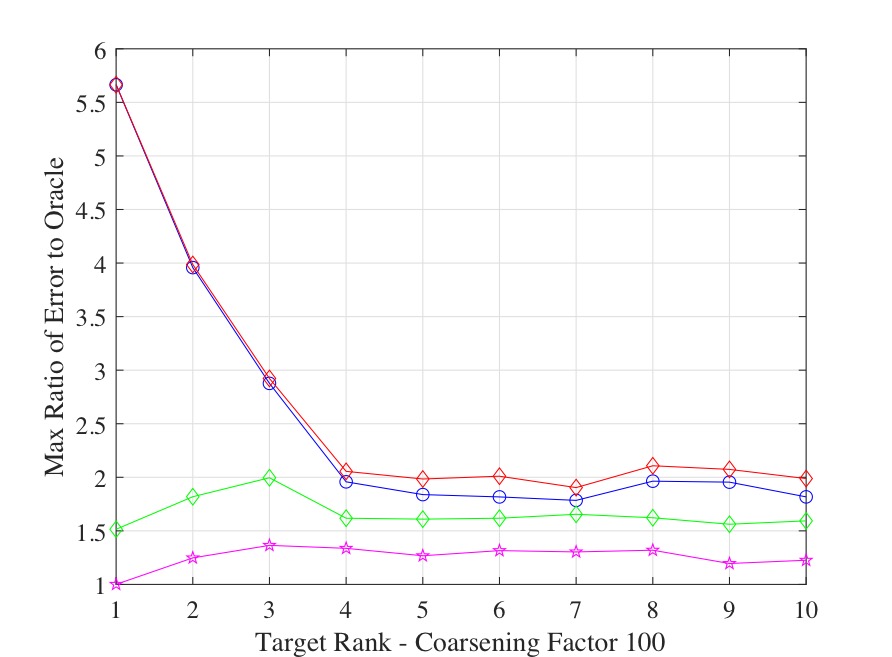}
\includegraphics[width=0.49\linewidth]{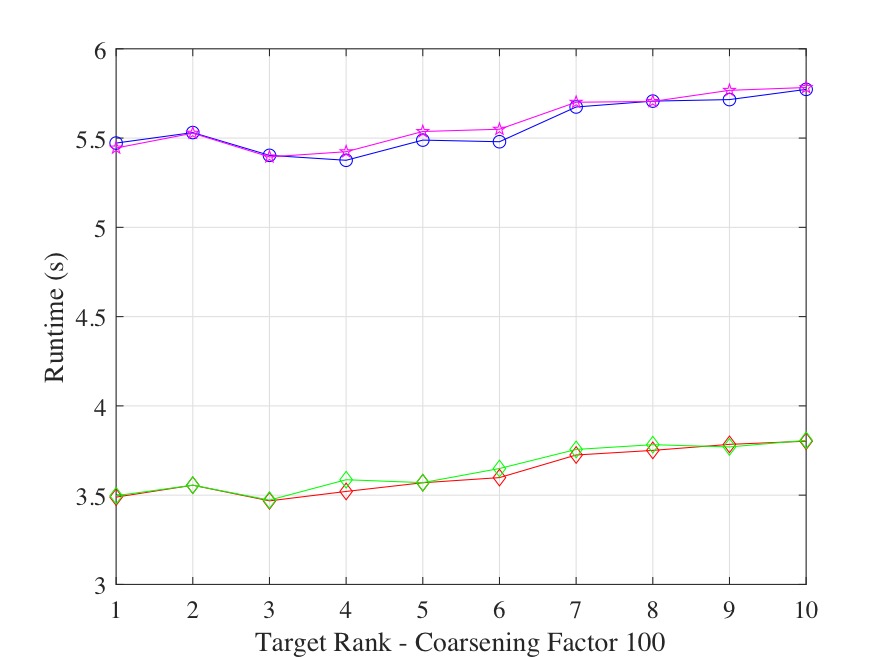}
\caption{Left: Maximum ratio over 100 independent trials of Frobenius error of schemes relative to the lower bound given by the Eckart-Young theorem \alec{on the turbulent channel flow dataset}. Right: Average runtimes over the same 100 trials.}
 \label{fig:ranks_ChanFlow_ID}
\end{figure}
The turbulent channel flow is now used as a test case for the ID algorithms. Examining the left panel of Figure~\ref{fig:ranks_ChanFlow_sketches}, for a coarsening factor of 100, all three sketches outperform the Gaussian sketch in terms of Frobenius norm error for target ranks between 10 and 130. As the target rank is increased beyond 130, direct injection performs the worst, followed by nearest neighbor and global sketch. Examining the right panel of Figure~\ref{fig:ranks_ChanFlow_sketches}, all sketches achieve similar runtimes for all target ranks tested; this is due to the dominant cost of forming the sketch matrix $\bm{H} = \bm{A}_f^T\bm{A}_c$ in Algorithm~\ref{alg:spcid}.
 
With the coarsening factor set to $n/n_c = 10$, the MREO (\ref{eq:maximumsuboptimality}) over $100$ independent trials for SPC-ID, SPC-ID+C-PWR, TPC-ID, and TPC-ID+C-PWR is provided in the top left panel of Figure~\ref{fig:ranks_ChanFlow_ID}. Across all target rank values, SPC-ID+C-PWR achieves the smallest error values, followed by TPC-ID+C-PWR, SPC-ID, and TPC-ID. SPC-ID+C-PWR achieves an MREO value well under 2 in all test cases; the probability the scheme generates a poor approximation is  low. For target rank 1, C-PWR reduces the MREO from approximately 8 to $1.5$ for TPC-ID and to approximately 1 for SPC-ID.

Shown in the top right panel of Figure~\ref{fig:ranks_ChanFlow_ID}, SPC-ID and SPC-ID+C-PWR are roughly 9 times slower than TPC-ID and TPC-ID+C-PWR. Constructing the lifting operator dominates the computational cost of SPC-ID, while C-PWR is inexpensive in comparison. Although SPC-ID is slower than TPC-ID, the cost of forming the lifting operator (and therefore the serial runtime of SPC-ID) is negligible compared to a second pass over the input.

The coarsening factor $n/n_c$ is increased to $100$ with results shown in the bottom panels of Figure~\ref{fig:ranks_ChanFlow_ID}. Across all target rank values, SPC-ID+C-PWR outperforms TPC-ID+C-PWR, SPC-ID, and TPC-ID. For target rank $1$, C-PWR reduces the MREO over $100$ independent trials of the approximations from almost 6 to 1 for SPC-ID and $1.5$ for TPC-ID. For target ranks 6 to 10, SPC-ID competes with TPC-ID+C-PWR in terms of accuracy. In the bottom right panel of Figure~\ref{fig:ranks_ChanFlow_ID}, SPC-ID and SPC-ID+C-PWR again achieve the slowest runtimes compared to their two-pass counterparts.

\section{\alec{In situ spatio-temporal compression of forced isotropic turbulence data}}
\label{sec:application}
\alec{This section presents the application of SPC-SVD and SPC-ID in tandem with three lossy data compression algorithms: SZ-2.0 (SZ)~\cite{di2016fast,tao2017significantly,liang2018error}, ZFP-0.5.0 (ZFP)~\cite{lindstrom2014fixed}, and FPZIP-1.1.4 (FPZIP)~\cite{lindstrom2006fast}. Whereas SPC-SVD and SPC-ID exploit the low-rank structure of data matrices, FPZIP, SZ, and ZFP do not do so directly. Lossy compressors like FPZIP, ZFP, and SZ are \alec{well} suited for spatial compression; they \alec{can exploit, e.g.,} the continuity of physical fields in simulation data to use local approximations to achieve compression. Datasets in this work are assumed to be inherently spatio-temporal; they are order 2 tensors whose column dimension corresponds to space and whose row dimension corresponds to time. Low-rank approximation methods - which are suited for temporal compression - can be used in tandem with SZ, FPZIP, and ZFP to yield {\it spatio-temporally} compressed data. This can be done with the method seeing each individual snapshot only once; to use SZ, ZFP, or FPZIP to compress the entire temporal evolution of the data would require multiple snapshots and therefore cannot be implemented in the same manner as the sketching-based low-rank approximation methods.} 

\alec{Spatio-temporal compression is achieved by first computing two factor matrices from a low-rank approximation, then using one of SZ, FPZIP, or ZFP to compress each factor matrix individually.} Using low-rank approaches combined with these three compressors may lead to enhanced compression factors with a minimal loss of accuracy relative to the existing low-rank approximation error. Moreover, this hybrid approach enables {\it in situ} implementation of the approach. The low-rank method is single-pass and sees each snapshot once; compressing the factor matrices downstream using FPZIP, ZFP, or SZ requires no additional passes over the original input data. Brief background on the three spatial compressors is now provided.

FPZIP~\cite{lindstrom2006fast} is a compressor developed at Lawrence Livermore National Laboratory which enables lossless and lossy compression of floating point data arrays. In the lossless setting, FPZIP \alec{achieves} compression ratios of about 1.5-4~\cite{lindstrom2006fast}; in a lossy setting it can achieve significantly greater data reduction. The algorithm relies on the {\it Lorenzo predictor}, which estimates values at a corner of a cube based on the values at the other corners~\cite{li2018data}.  

ZFP~\cite{lindstrom2014fixed}, also developed at Lawrence Livermore, relies on a custom orthogonal transform on $4^d$-size blocks (where $d$ is the order of the data tensor) and encoding the corresponding coefficients. It allows for both lossless and lossy compression of floating point data, and frequently outperforms FPZIP in lossy compression on benchmark problems~\cite{lindstrom2014fixed}. Moreover, ZFP allows the user to select a fixed size and a fixed accuracy (maximum allowable point-wise absolute error). 

SZ~\cite{liang2018error} is a compression algorithm developed by Argonne National Laboratory. SZ exclusively enables lossy compression, allows the user to determine a maximum allowable error, and relies on multiple prediction schemes to estimate data values based on those of their neighbors. SZ generalizes the Lorenzo predictor used in FPZIP to cubes of arbitrary size, $n^d$, where $d$ is the dimensionality of the data and $n$ is the number of data points along each dimension of each cube~\cite{li2018data}.

When using SPC-SVD and SPC-ID in tandem with FPZIP, ZFP, and SZ, \alec{the low-rank approximation is formed first. Then, the dense} factor matrices are compressed \alec{individually}, yielding further compressed files which are denoted $\mathcal{S}(\bm{B})$ and $\mathcal{S}(\bm{C})$. \alec{In the case of SPC-SVD, $\bm{B} = \tilde{\bm{U}}_k$ and $\bm{C} = \tilde{\bm{S}}_k\tilde{\bm{V}}_k^T$; for SPC-ID, $\bm{B} = \bm{P}_c$ and $\bm{C} = \bm{A}_f(\mathcal{I}_c,:)\bm{T}_r$.}
 \alec{The factor matrices are not reshaped into lower or higher dimensional tensors; they are compressed as 2D arrays}. Let
\begin{equation}
    \text{Spatio-Temporal CF} = \frac{\text{no. bytes}(\bm{A}_f)}{\text{no. bytes}(\mathcal{S}({\bm{B}})) + \text{no. bytes}(\mathcal{S}({\bm{C}}))}.
    \label{eq:spatiotemporalCF}
\end{equation}

Each factor matrix $\bm{B}$ and $\bm{C}$ is compressed using a given compressor $\mathcal{S}$ such that the decompressor, abusively denoted $\mathcal{S}^{-1}$, yields a reconstructed factor matrix such that the errors in the factor matrices are much smaller than the overall error.
These factor matrices can be used to form the final approximation:
\begin{equation}
    \bm{A}_f \approx \mathcal{S}^{-1}(\mathcal{S}(\bm{B})) \mathcal{S}^{-1}(\mathcal{S}(\bm{C})).
\end{equation}
Let $\epsilon_1$ and $\epsilon_2$ be the reconstruction errors due to the compression of the factor matrices 
\begin{subequations}
\begin{align}
\epsilon_1 &= \Vert \mathcal{S}^{-1}(\mathcal{S}(\bm{B})) - \bm{B} \Vert_2 ,\\
\epsilon_2 &= \Vert \mathcal{S}^{-1}(\mathcal{S}(\bm{C})) - \bm{C} \Vert_2. \\
\label{eqn:spatiotemporalerror_epsilons}
\end{align}
\end{subequations}
Then,
\begin{align}
\Vert \bm{B} \bm{C} - \mathcal{S}^{-1}(\mathcal{S}(\bm{B})) \mathcal{S}^{-1}(\mathcal{S}(\bm{C})) \Vert_2 &\leq \Vert \bm{B} \Vert_2 \epsilon_2 + (\Vert \bm{C} \Vert_2 + \epsilon_2) \epsilon_1  .
\label{eqn:spatiotemporalerror}
\end{align}
If $\epsilon_1,\epsilon_2 \ll \Vert \bm{A}_f - \bm{B}\bm{C} \Vert_2 $, there is negligible impact on the overall approximation error. 
\begin{remark}
In ZFP and SZ, $\epsilon_1$ and $\epsilon_2$ can be controlled directly, allowing for modulation of the final approximation error due to compression of the factor matrices $\bm{B}$ and $\bm{C}$.
\end{remark}

The dataset in this application is a $100 \times 128^3$ matrix extracted from a DNS of forced isotropic turbulence on a $1024^3$ periodic grid, simulated using a pseudo-spectral parallel code for the Navier-Stokes equations from the Johns Hopkins Turbulence Databases (JHTDB)~\cite{perlman2007data,li2008public}. After the system has reached a statistically stationary state, 100 snapshots of the \alec{pressure} are generated and stored to file. The pressure of the flow is extracted at $128^3$ points (stride of $8$ on the original $1024^3$ grid) on a $x,y,z-$domain of $2 \pi \times 2 \pi \times 2 \pi$ for all $100$ timesteps between $0$ and $0.0198$ seconds. A snapshot of the solution taken at the $42^{nd}$ timestep is provided in Figure~\ref{fig:ground_truth_JHU}.

In this section, compression tests for \alec{FPZIP} and ZFP are carried out using \alec{their respective Python wrappers}, while the C implementation of SZ is called from Python using its built-in interface.
Errors are reported in terms of \alec{relative Frobenius norm} in all test cases. In the spatial compression experiments, the following parameters are used. In FPZIP, the fixed precision is set to 20. In ZFP, the tolerance is set to $10^{-2}$. In SZ, the absolute square error tolerance is set to $5 \times 10^{-1}$. These settings were found to yield approximations with relative Frobenius norm error close to $10^{-3}$.

The following parameters are used in the spatio-temporal compression experiments. In FPZIP used in tandem with ID (FPZIP+ID), the fixed precision is set to 24, while in FPZIP+SVD, the fixed precision is set to 20. In ZFP+ID, the fixed accuracy is set to $10^{-6}$ times the norm of the full data matrix. In ZFP+SVD, the fixed accuracy is set to $10^{-9}$ times the norm of the full data matrix. In SZ+ID, the fixed accuracy is set to $10^{-2}$, while in SZ+SVD the tolerance is set to $10^{-3}$. These settings were found to yield similar reconstructions accuracies for the different compression approaches. \alec{In SPC-ID and SPC-SVD the target rank is set to $4$, yielding a temporal compression factor of 25.  SPC-SVD and SPC-ID are implemented using a direct injection sketch, one power iteration, and an oversampling parameter of $10$. For the \alec{C-PWR implementation} the sub-sampling factor is set to $10$.}

 \begin{figure}[t]
\centering
\includegraphics[width=0.6\linewidth]{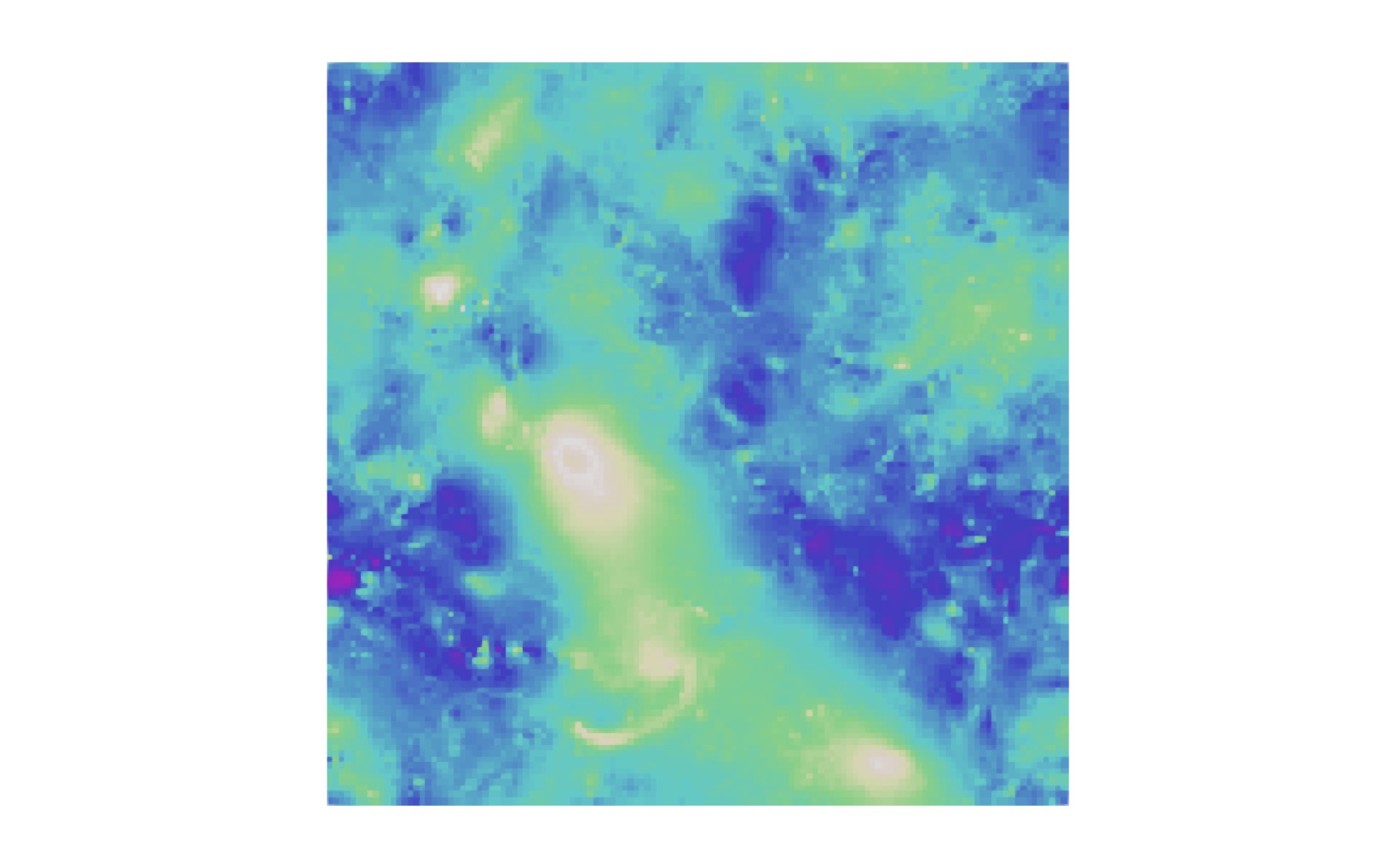}
\caption{Volumetric $x$-$z$ plane snapshot of the ground truth pressure field taken at the $42^{nd}$ time-step.}
 \label{fig:ground_truth_JHU}
 \end{figure}

 \begin{figure}[ht]
\centering
\includegraphics[width=0.49\linewidth]{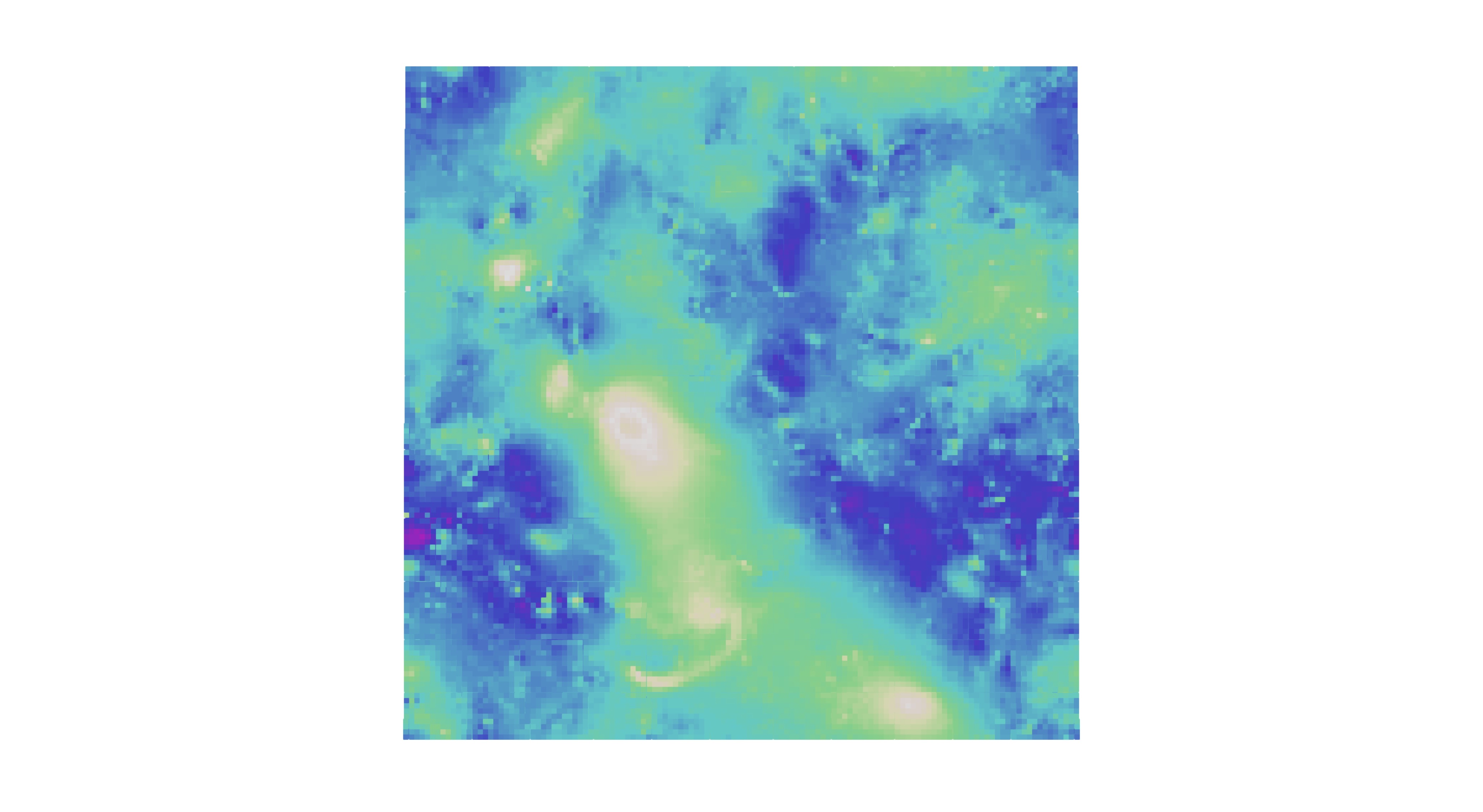}
\includegraphics[width=0.49\linewidth]{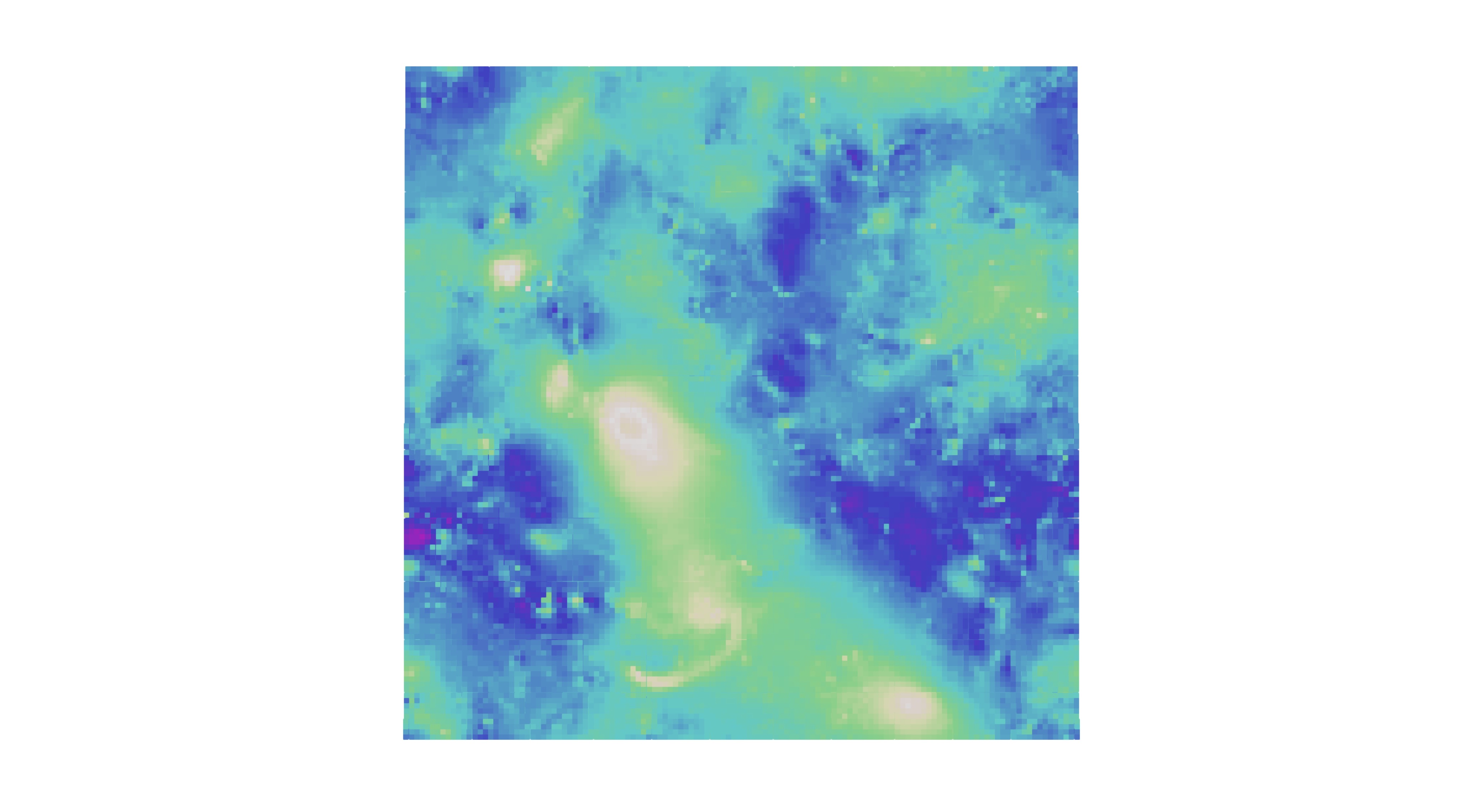}
\includegraphics[width=0.49\linewidth]{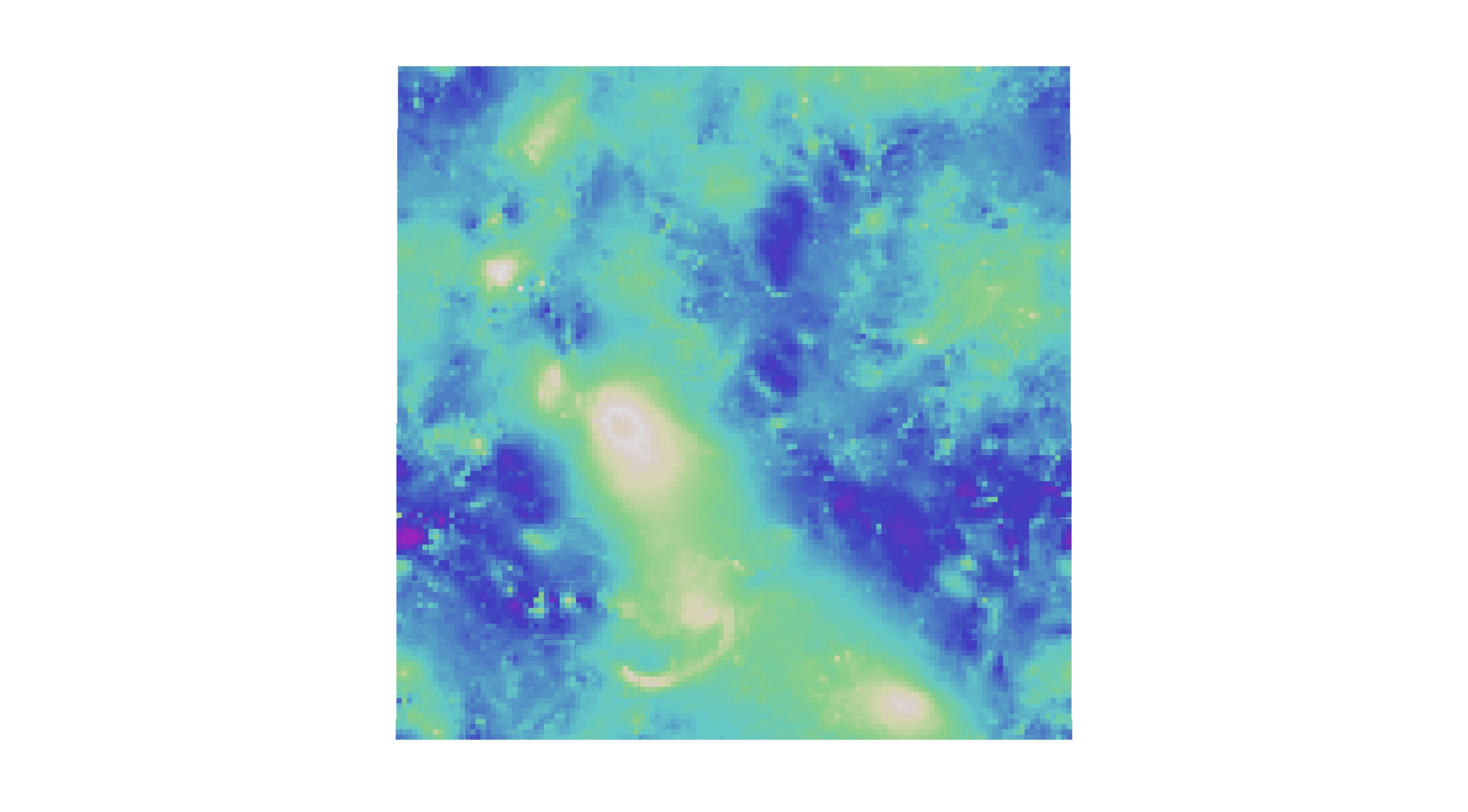}
\includegraphics[width=0.49\linewidth]{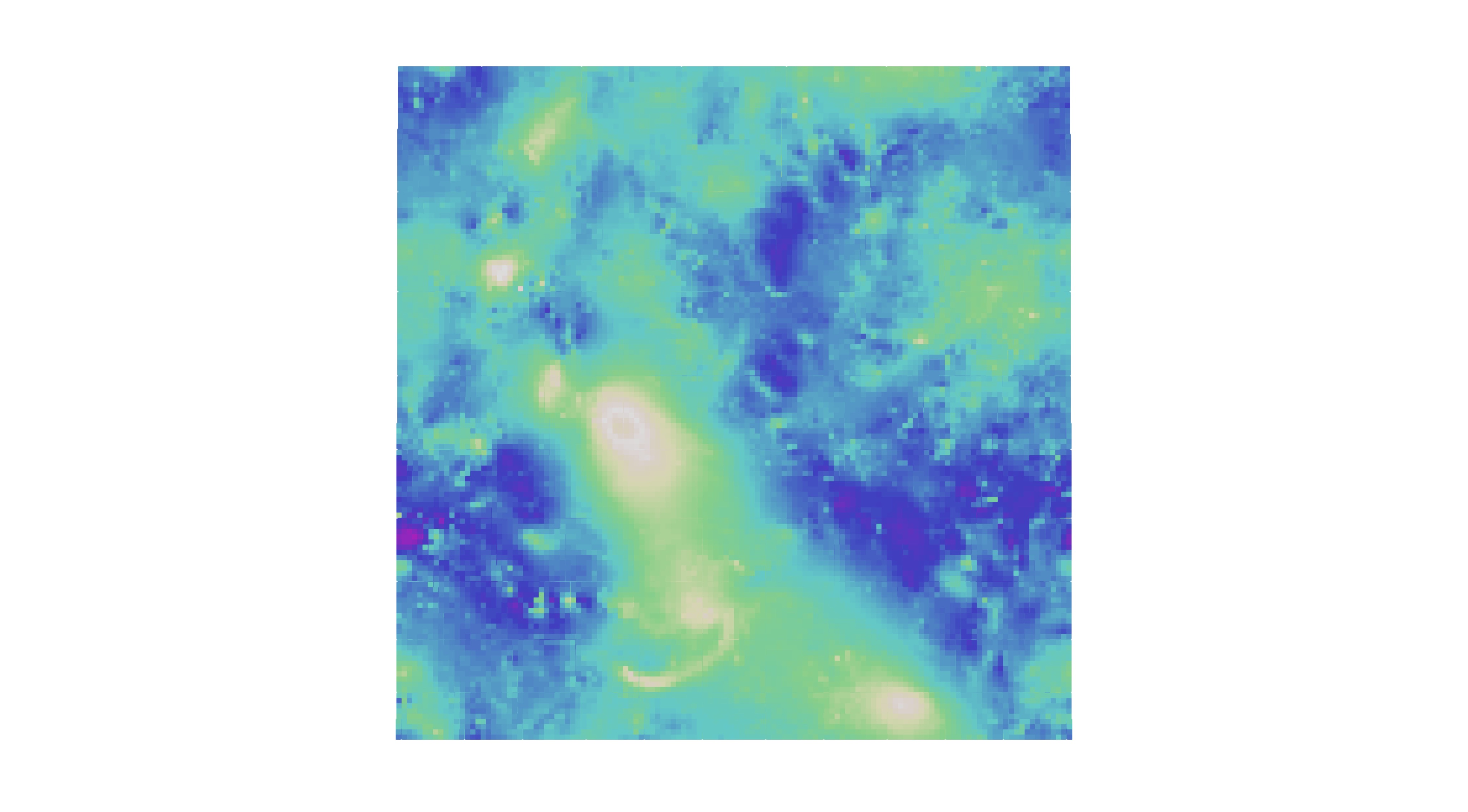}
\includegraphics[width=0.49\linewidth]{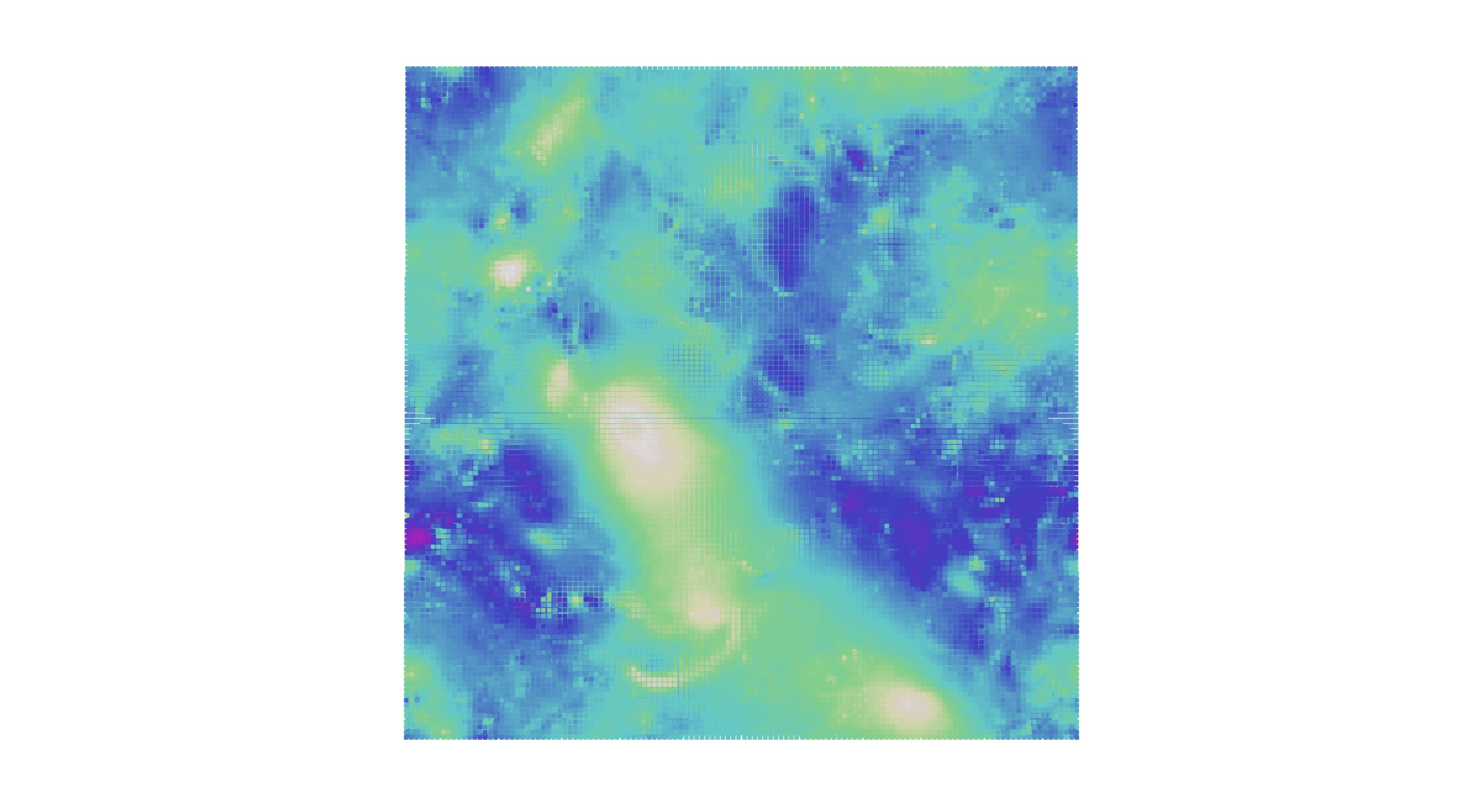}
\includegraphics[width=0.49\linewidth]{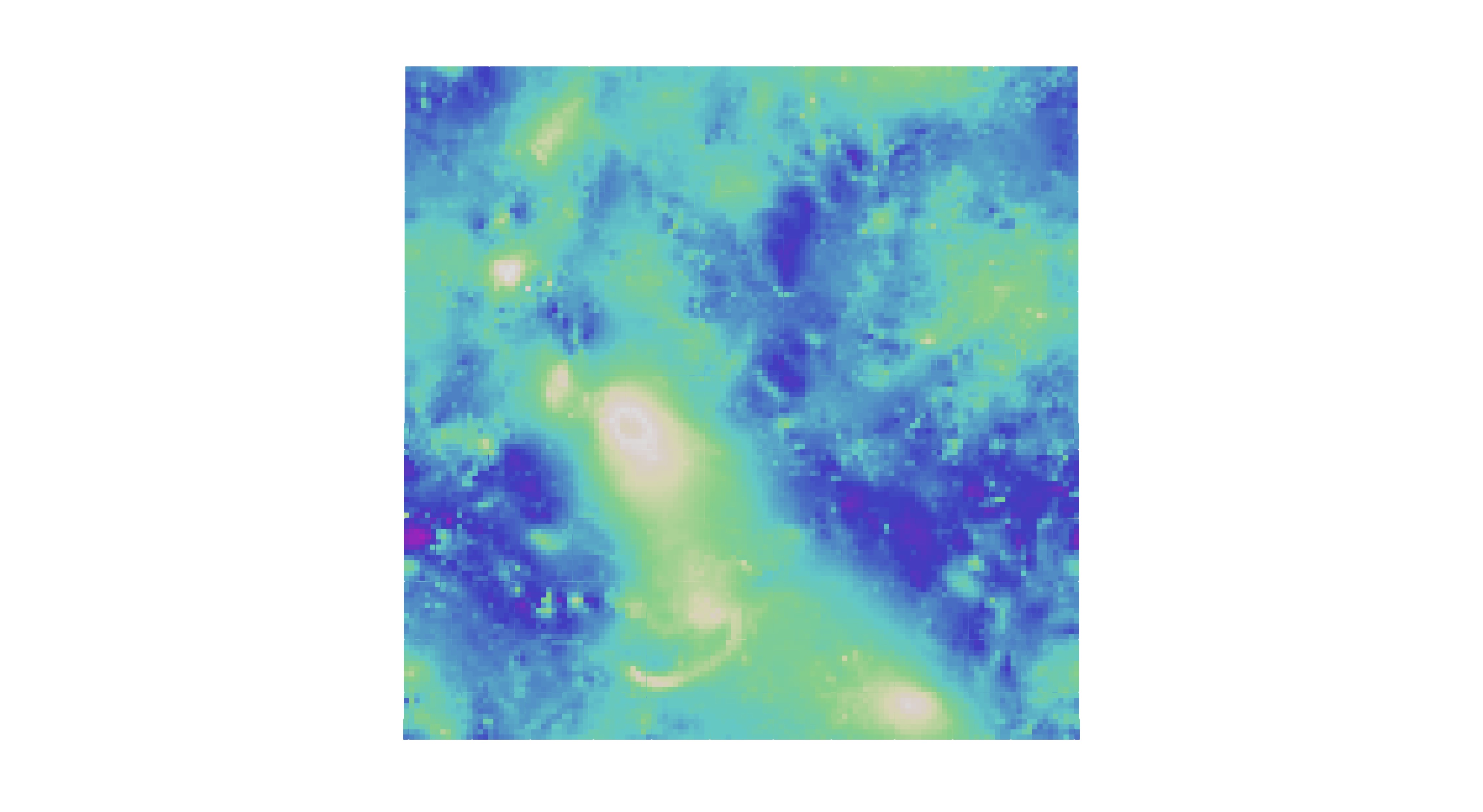}
\caption{Volumetric snapshots of the pressure field taken at the $42^{nd}$ time-step. From top to bottom: FPZIP, ZFP, and SZ. From left to right: ID and SVD. All snapshots are reconstructed from low-rank approximations which are accurate to almost 3 digits in terms of relative Frobenius error.}
 \label{fig:12snapshots}
 \end{figure}
 
 \begin{table}[t]
\centering
\tabcolsep7pt\begin{tabular}{|l|c|c|c|c|c|}
\hline
FPZIP (S) & ZFP (S) & SZ (S) & SPC-SVD (T) & SPC-ID (T)\\
\hline
(6.0,1.6e-3) & ($\bm{7.2}$,$\bm{9.3e}$-$\bm{4}$) & ($\bm{7.2}$,9.4e-4) & ($\bm{25.0}$,$\bm{1.3e}$-$\bm{3}$) & ($\bm{25.0}$,2.0e-3)\\
\hline
\end{tabular}
\caption{Spatial (S) and temporal (T) compression factors and errors for the 100 snapshot JHU isotropic turbulence dataset. Spatial compression factors are computed as the average ratio of the number of bytes required to store each of the 100 snapshots relative to the full size. Temporal compression factors are the ratio of the size of the original matrix to the sum of the sizes of the factor matrices. All errors are given in terms of the relative Frobenius error for the entire 100 snapshot data.} 
\label{tab:spatialcompression}
\end{table}

\begin{table}[t]
\centering
\tabcolsep7pt\begin{tabular}{|l|c|c|c|c|c|}
\hline
SVD+FPZIP & SVD+ZFP & SVD+SZ & ID+FPZIP & ID+ZFP & ID+SZ  \\
\hline
(110.8,3.8e-3) & (131.5,$\bm{1.4e}$-$\bm{3}$) & ($\bm{133.6}$,$\bm{1.4e}$-$\bm{3}$) & (122.3,$\bm{2.0e}$-$\bm{3}$) & ($\bm{185.5}$,2.2e-3) & (107.6,2.2e-3) \\
\hline
\end{tabular}
\caption{Spatio-temporal compression factors and errors for JHU isotropic turbulence dataset. Compression factors are computed as in~(\ref{eq:spatiotemporalCF}), errors are given in terms of the relative Frobenius error.} 
\label{tab:spatiotemporalcompression}
\end{table}

Table~\ref{tab:spatialcompression} provides the relative errors and spatial compression factors achieved using FPZIP, ZFP, and SZ on the 100 snapshots. Spatial compression factors are computed as the average compression factor over the 100 snapshots. Relative errors are reported in terms of the relative Frobenius error in approximation the entire data. FPZIP achieves a spatial compression factor of 6.0 across the 100 timesteps, with a reconstruction accuracy of $1.6 \times 10^{-3}$. ZFP outperforms FPZIP with a compression factor of 7.3 and a smaller reconstruction accuracy of $9.3 \times 10^{-4}$. Finally, SZ achieves a compression factor of 7.2 with a reconstruction error of $9.4 \times 10^{-4}$. Of the three methods, ZFP performs the best, achieving slightly better compression and reconstruction accuracy than SZ in this problem.

In the two right panels of Table~\ref{tab:spatialcompression} the relative errors and temporal compression factors using SPC-SVD and SPC-ID are reported. Both methods achieve a temporal compression factor of $25$, while SPC-SVD outperforms SPC-ID in terms of accuracy. However, SPC-ID produces a decomposition comprised of snapshots from the original data. This gives it an advantage over SPC-SVD in downstream spatial compression.

Table~\ref{tab:spatiotemporalcompression} provides the relative errors of SPC-ID and SPC-SVD used in combination with each of FPZIP, ZFP, and SZ. In Figure~\ref{fig:12snapshots}, the reconstructed snapshots from each of the six combinations are provided. Of all six combinations, the smallest error is achieved using SVD+ZFP, as well as SVD+SZ. SVD+SZ outperforms SVD+ZFP in terms of compression factor as well. When using SPC-ID, ZFP is the best of the methods in terms of both accuracy and compression. The factor matrix $\bm{B}$ in a row ID is comprised of solution snapshots, and therefore physically structured; this is precisely the type of data for which ZFP is designed. Across all cases, significant improvement over the spatial compression factors reported in Table~\ref{tab:spatialcompression} is achieved. This indicates that the spatio-temporal compression approached proposed is a useful tool in simulation data compression. By identifying low-rank structure in the temporal domain of the data, SPC-ID and SPC-SVD enable enhanced, {\it in situ} compression of the turbulence dataset at a minimal loss of accuracy.

\section{Proofs of main theoretical results}
\label{sec:proofs}
This section is devoted to proving Theorems~\ref{thm:tpsvdcoarseerror},~\ref{thm:SPCSVD},~\ref{thm:SPCIDbound}, and~\ref{thm:coarsepwritr}.

\subsection{Proof of Theorem~\ref{thm:tpsvdcoarseerror}} 
\label{sec:prooftpsvdcoarseerror}

Let the SVD of $\bm{A}_c = \bm{U}_{c}\bm{\Sigma}_{c}\bm{V}_{c}^T$, $\tilde{\bm{U}}_k = \bm{U}_c(:,1:k)$, and recall the operator $\bm{M}$ from Definition~\ref{def:interpop}. Then,
\begin{align}
\Vert \bm{A}_f - \tilde{\bm{U}}_k\tilde{\bm{U}}_k^T \bm{A}_f \Vert_2 
&= \Vert (\bm{I} - \tilde{\bm{U}}_k\tilde{\bm{U}}_k^T) \bm{A}_f \Vert_2, \\
&= \Vert (\bm{I} - \tilde{\bm{U}}_k\tilde{\bm{U}}_k^T) (\bm{A}_c\bm{M} + \bm{E}_I) \Vert_2,\\
&\leq \Vert \bm{M} \Vert_2  \sigma_{c,k+1}   + \Vert \bm{E}_I \Vert_2.
\label{eqn:rangeapprox}
\end{align}
The matrix $\tilde{\bm{V}}_k^T$ is then obtained,
\begin{equation}
\tilde{\bm{V}}_k^T = (\tilde{\bm{U}}_k\tilde{\bm{\Sigma}}_k)^+ \bm{A}_f.
\end{equation}
Then, 
\begin{align}
\Vert \bm{A}_f - \tilde{\bm{U}}_k\tilde{\bm{\Sigma}}_k\tilde{\bm{V}}^T_k \Vert_2 
&= \Vert \bm{A}_f - \tilde{\bm{U}}_k\tilde{\bm{\Sigma}}_k (\tilde{\bm{U}}_k\tilde{\bm{\Sigma}}_k)^+ \bm{A}_f \Vert_2, \\
&= \Vert \bm{A}_f - \tilde{\bm{U}}_k\tilde{\bm{U}}_k^T \bm{A}_f \Vert_2, \\
&\leq \Vert \bm{M} \Vert_2  \sigma_{c,k+1}   + \Vert \bm{E}_I \Vert_2. 
\end{align}
\subsection{Proof of Theorem~\ref{thm:SPCSVD}}
The main result of Section~\ref{sec:singlepass} guarantees the approximation accuracy of SPC-SVD. 
\label{sec:proofspcsvd}
Let the QR decomposition of the coarse grid data matrix be $\bm{A}_c = \bm{Q}_c\bm{R}_c$ and $\bm{M}$ be as in Definition~\ref{def:interpop}.
Then,
\begin{equation}
    \bm{A}_f = \bm{Q}_c\bm{R}_c\bm{M} + \bm{E}_I,
\end{equation}
which yields
\begin{subequations}
\begin{align}
    \Vert \bm{A}_f - \bm{Q}_c\bm{Q}_c^T \bm{A}_f \Vert_2 
    &= \Vert (\bm{I} - \bm{Q}_c\bm{Q}^T_c)(\bm{Q}_c\bm{R}_c\bm{M} + \bm{E}_I) \Vert_2,
    \\
    & = \Vert (\bm{I} - \bm{Q}_c\bm{Q}^T_c) \bm{E}_I \Vert_2,
    \\
    & \leq \Vert \bm{E}_I \Vert_2.
\end{align}
\end{subequations}
Let $\hat{\bm{A}}_{f,k}$ be the best rank-$k$ approximation to $\bm{A}_f$. Then,
\begin{subequations}
\begin{align}
 \Vert \bm{A}_f - \hat{\bm{A}}_f \Vert_2 
    & \leq  \Vert \bm{A}_f - \bm{Q}_c\bm{Q}_c^T \bm{A}_f \Vert_2 + \Vert \bm{Q}_c\bm{Q}_c^T \bm{A}_f - \hat{\bm{A}}_f \Vert_2,\\
    & \leq   \Vert \bm{E}_I \Vert_2 + \Vert \bm{Q}_c\bm{Q}^T_c(\bm{A}_f - \hat{\bm{A}}_{f,k}) \Vert_2,\label{ineq:bestkrankproj}\\
    & \leq \Vert \bm{E}_I \Vert_2 + \Vert \bm{A}_f - \hat{\bm{A}}_{f,k} \Vert_2,\\
    & = \sigma_{f,k+1} + \Vert \bm{E}_I \Vert_2.
\end{align}
\end{subequations}
\begin{remark}
The inequality (\ref{ineq:bestkrankproj}) follows from the fact that $\hat{\bm{A}}_f$ is the best rank-$k$ approximation to $\bm{Q}_c\bm{Q}_c^T \bm{A}_f$. $\bm{Q}_c\bm{Q}_c^T \hat{\bm{A}}_{f,k}$, a rank-$k$ matrix, can be no better of an approximation to $\bm{Q}_c\bm{Q}_c^T \bm{A}_f$ than $\hat{\bm{A}}_f$ is.
\end{remark}
\subsection{Proof of Theorem~\ref{thm:SPCIDbound}}
The following lemma is a restatement of a result from~\cite{hampton2018practical}.
\begin{lemma} (Lemma 4 from~\cite{hampton2018practical}.) Let $\tilde{\bm{U}}_r$ have columns comprising the $r$
left singular vectors corresponding to the $r$ largest singular values of $\bm{A}_c$, $\bm{P}_{\tilde{\mathcal{U}}_r}$ be the orthogonal projection onto the range of $\tilde{\bm{U}}_r$, $\bm{P}_{(\tilde{\mathcal{U}}_r)^{\perp}}$ be the orthogonal projection onto the orthogonal complement of the range of $\tilde{\bm{U}}_r$, $\sigma_{c,j}$ be the $j$th largest singular value of $\bm{A}_c$, and $\epsilon(\tau)$ defined as in (\ref{eqn:epstau_main_test}). Then, 
\begin{subequations}
\begin{align}
    \Vert \bm{P}_{(\tilde{\mathcal{U}}_r)^{\perp}} \bm{A}_f  \Vert_2 &\leq \left(\epsilon(\tau) + \tau \sigma_{c,r+1}^2 \right)^{1/2}, \\
\Vert \bm{A}_c^+\bm{P}_{\tilde{\mathcal{U}}_r}\bm{A}_f\Vert_2 &\leq \left(\tau + \epsilon(\tau) \sigma_{c,r}^{-2}\right)^{1/2}. 
\end{align}
\end{subequations}
\label{lem:SPIDlem1}
\end{lemma}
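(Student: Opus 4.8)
The plan is to prove both inequalities by a single mechanism: write each squared operator norm as the maximum eigenvalue of a symmetric matrix, insert the splitting
$\bm{A}_f\bm{A}_f^T = \tau\,\bm{A}_c\bm{A}_c^T + \bm{E}$ with $\bm{E} := \bm{A}_f\bm{A}_f^T - \tau\bm{A}_c\bm{A}_c^T$ (so that $\lambda_{\max}(\bm{E}) = \epsilon(\tau)$ by (\ref{eqn:epstau_main_test})), and separate the two resulting pieces with Weyl's inequality $\lambda_{\max}(\bm{X}+\bm{Y}) \leq \lambda_{\max}(\bm{X}) + \lambda_{\max}(\bm{Y})$. Throughout I would work from the eigendecomposition $\bm{A}_c\bm{A}_c^T = \sum_j \sigma_{c,j}^2\,\bm{u}_{c,j}\bm{u}_{c,j}^T$, where the $\bm{u}_{c,j}$ are the left singular vectors of $\bm{A}_c$; then $\bm{P}_{\tilde{\mathcal{U}}_r}$ retains exactly the first $r$ terms of this sum and $\bm{P}_{(\tilde{\mathcal{U}}_r)^{\perp}}$ the remainder, which is what produces the clean singular-value constants.

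For the first bound I would use that $\bm{P}_{(\tilde{\mathcal{U}}_r)^{\perp}}$ is a symmetric idempotent to write $\Vert \bm{P}_{(\tilde{\mathcal{U}}_r)^{\perp}}\bm{A}_f\Vert_2^2 = \lambda_{\max}\!\big(\bm{P}_{(\tilde{\mathcal{U}}_r)^{\perp}}\bm{A}_f\bm{A}_f^T\bm{P}_{(\tilde{\mathcal{U}}_r)^{\perp}}\big)$. Substituting the splitting and applying Weyl yields two terms. The coarse-Gramian term is $\bm{P}_{(\tilde{\mathcal{U}}_r)^{\perp}}\bm{A}_c\bm{A}_c^T\bm{P}_{(\tilde{\mathcal{U}}_r)^{\perp}} = \sum_{j>r}\sigma_{c,j}^2\,\bm{u}_{c,j}\bm{u}_{c,j}^T$, whose maximum eigenvalue is exactly $\sigma_{c,r+1}^2$, contributing $\tau\sigma_{c,r+1}^2$; the $\bm{E}$ term is controlled by the compression fact below and contributes $\epsilon(\tau)$. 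Taking the square root gives $(\epsilon(\tau) + \tau\sigma_{c,r+1}^2)^{1/2}$.

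For the second bound I would set $\bm{W} := \bm{A}_c^+\bm{P}_{\tilde{\mathcal{U}}_r}\bm{A}_f$ and compute $\Vert \bm{W}\Vert_2^2 = \lambda_{\max}(\bm{W}\bm{W}^T)$. The same splitting and Weyl again give two pieces. Using $\bm{A}_c^+\bm{u}_{c,j} = \sigma_{c,j}^{-1}\bm{v}_{c,j}$, the coarse-Gramian piece collapses to $\bm{A}_c^+\bm{P}_{\tilde{\mathcal{U}}_r}\bm{A}_c\bm{A}_c^T\bm{P}_{\tilde{\mathcal{U}}_r}(\bm{A}_c^+)^T = \sum_{j=1}^r \bm{v}_{c,j}\bm{v}_{c,j}^T$, an orthogonal projection with maximum eigenvalue $1$, contributing $\tau$. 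The $\bm{E}$ piece has the form $\bm{M}\bm{E}\bm{M}^T$ with $\bm{M} := \bm{A}_c^+\bm{P}_{\tilde{\mathcal{U}}_r} = \sum_{j=1}^r \sigma_{c,j}^{-1}\bm{v}_{c,j}\bm{u}_{c,j}^T$, so that $\Vert \bm{M}\Vert_2 = \sigma_{c,r}^{-1}$ (the largest retained inverse singular value); the compression fact then bounds its maximum eigenvalue by $\epsilon(\tau)\sigma_{c,r}^{-2}$. Summing and taking the square root gives $(\tau + \epsilon(\tau)\sigma_{c,r}^{-2})^{1/2}$.

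The step I expect to require the most care, and which I would isolate as an auxiliary fact, is the control of the $\bm{E}$-terms: for any matrix $\bm{M}$ and symmetric $\bm{E}$ with $\lambda_{\max}(\bm{E}) \geq 0$, one has $\lambda_{\max}(\bm{M}\bm{E}\bm{M}^T) \leq \Vert \bm{M}\Vert_2^2\,\lambda_{\max}(\bm{E})$. This follows from the Rayleigh-quotient characterization: for any unit vector $\bm{x}$, $\bm{x}^T\bm{M}\bm{E}\bm{M}^T\bm{x} = (\bm{M}^T\bm{x})^T\bm{E}(\bm{M}^T\bm{x}) \leq \lambda_{\max}(\bm{E})\,\Vert \bm{M}^T\bm{x}\Vert_2^2 \leq \lambda_{\max}(\bm{E})\,\Vert \bm{M}\Vert_2^2$. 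The subtlety is that $\epsilon(\tau) = \lambda_{\max}(\bm{E})$ is a \emph{one-sided} quantity, the maximum eigenvalue rather than the spectral norm $\max_i|\lambda_i(\bm{E})|$, so the nonnegativity $\epsilon(\tau) \geq 0$ is essential for the middle inequality to point in the right direction, and is precisely the regime in which the stated square-root bounds are meaningful. With this fact in hand (applied with $\bm{M} = \bm{P}_{(\tilde{\mathcal{U}}_r)^{\perp}}$, where $\Vert \bm{M}\Vert_2 = 1$, for the first bound and $\bm{M} = \bm{A}_c^+\bm{P}_{\tilde{\mathcal{U}}_r}$ for the second), both estimates follow as described.
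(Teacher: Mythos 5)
Your proof is correct. There is, however, no in-paper proof to compare it against: the paper imports this statement verbatim as Lemma~4 of \cite{hampton2018practical} and explicitly calls it a restatement, deferring the proof to that reference. Your route --- splitting $\bm{A}_f\bm{A}_f^T = \tau\bm{A}_c\bm{A}_c^T + \bm{E}$ with $\lambda_{\max}(\bm{E}) = \epsilon(\tau)$, converting each squared spectral norm to $\lambda_{\max}$ of a symmetric product, separating the two pieces by Weyl's inequality, and collapsing the coarse-Gramian terms through the SVD of $\bm{A}_c$ (to $\sigma_{c,r+1}^2$ for the $\bm{P}_{(\tilde{\mathcal{U}}_r)^{\perp}}$ bound, and to the projection $\sum_{j=1}^{r}\bm{v}_{c,j}\bm{v}_{c,j}^T$ of norm $1$, with $\Vert \bm{A}_c^+\bm{P}_{\tilde{\mathcal{U}}_r}\Vert_2 = \sigma_{c,r}^{-1}$, for the second) --- is essentially the argument of the cited source, so you have faithfully reconstructed it rather than found a different one. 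Your flagged subtlety, that $\epsilon(\tau)$ is the one-sided quantity $\lambda_{\max}(\bm{E})$ and must be nonnegative for the step $\lambda_{\max}(\bm{M}\bm{E}\bm{M}^T) \leq \Vert\bm{M}\Vert_2^2\,\lambda_{\max}(\bm{E})$ to point the right way, is well taken and harmless in the paper's setting: since $\bm{A}_c \in \mathbb{R}^{m \times n_c}$ with $n_c < m$ in the regimes considered, $\bm{A}_c\bm{A}_c^T$ is rank-deficient, and any unit vector $\bm{x}$ with $\bm{A}_c^T\bm{x} = \bm{0}$ gives $\bm{x}^T\bm{E}\bm{x} = \Vert \bm{A}_f^T\bm{x}\Vert_2^2 \geq 0$, so $\epsilon(\tau) \geq 0$ automatically.
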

\label{sec:proofSPCID}
With this lemma established, the main result of Section~\ref{sec:SPCID} can be proven.
Given the SVD of $\bm{A}_c$,
\begin{subequations}
\begin{align}
\bm{A}_c &= \bm{U}_c\bm{S}_c\bm{V}_c^T,\\
\tilde{\bm{U}}_r &= \bm{U}_c(:,1:r).
\end{align}
\end{subequations}
The lifting operator $\bm{T}_r$ such that $\bm{A}_c\bm{T}_r \approx \bm{A}_f$ is then defined as follows.
\begin{align}
\bm{T}_r
&= \bm{A}_c^+\tilde{\bm{U}}_r\tilde{\bm{U}}^T_r\bm{A}_f.
\end{align}
Let $\bm{P}_c \bm{A}_c(\mathcal{I}_c,:)$ be the ID approximation of the coarse grid matrix $\bm{A}_c$. Then, 
\begin{align}
    \hat{\bm{A}}_f = \bm{P}_c \bm{A}_c(\mathcal{I}_c,:)\bm{T}_r.
\end{align}
Let $\bm{A}_c\bm{A}_c^+$ be the orthogonal projection onto the range of $\bm{A}_c$. Then,
\begin{subequations}
\begin{align}
    \Vert \bm{A}_f -  \hat{\bm{A}}_f\Vert_2 
    &\leq \Vert \bm{A}_f - \bm{A}_c \bm{T}_r \Vert_2 + \Vert   \bm{A}_c \bm{T}_r - \bm{P}_c  \bm{A}_c(\mathcal{I}_c,:)\bm{T}_r\Vert_2,
    \\
    &\leq \Vert \bm{A}_f - \bm{A}_c \bm{A}_c^+\tilde{\bm{U}}_r\tilde{\bm{U}}^T_r\bm{A}_f \Vert_2 + \Vert   \bm{A}_c \bm{A}_c^+\tilde{\bm{U}}_r\tilde{\bm{U}}^T_r\bm{A}_f - \bm{P}_c  \bm{A}_c(\mathcal{I}_c,:)\bm{A}_c^+\tilde{\bm{U}}_r\tilde{\bm{U}}^T_r\bm{A}_f\Vert_2,
    \\
     &\leq \Vert (\bm{I} - \bm{A}_c \bm{A}_c^+\bm{P}_{\tilde{\mathcal{U}}_r}) \bm{A}_f  \Vert_2 + \Vert \bm{A}_c - \bm{P}_c \bm{A}_c(\mathcal{I}_c,:) \Vert_2 \Vert \bm{A}_c^+\bm{P}_{\tilde{\mathcal{U}}_r} \bm{A}_f\Vert_2, 
    \\
    &\leq \Vert \bm{P}_{(\tilde{\mathcal{U}}_r)^{\perp}} \bm{A}_f  \Vert_2 + \Vert \bm{A}_c - \bm{P}_c \bm{A}_c(\mathcal{I}_c,:) \Vert_2 \Vert \bm{A}_c^+\bm{P}_{\tilde{\mathcal{U}}_r}\bm{A}_f\Vert_2 \label{ineq:thmwithoutlems}.
\end{align}
\end{subequations}

Combining Lemma~\ref{lem:SPIDlem1} with the inequality (\ref{ineq:thmwithoutlems}) yields Theorem~\ref{thm:SPCIDbound}.
\subsection{Proof of Theorem~\ref{thm:coarsepwritr}}
\label{sec:proofcoarsepoweriteration}
The proof begins with the following lemma, which quantifies the error incurred using C-PWR.
\begin{lemma}
\label{lem:powerlemma}
Let $\bm{A}_c$ be a coarse grid sketch of a fine grid data matrix $\bm{A}_f$, $\tau$ be a positive real number, $\rho(\tau)$ be defined as in Definition~\ref{def:rho_tau}, and $q$ be the number of power iterations used in C-PWR. Then,
\begin{align}
    \Vert \left(\bm{A}_f\bm{A}_f^T\right)^q\bm{A}_f - \left(\tau\bm{A}_c\bm{A}_c^T\right)^q\bm{A}_f \Vert_2 \lesssim q \tau^{q-1}\Vert \bm{A}_c \Vert_2^{2q-2} \Vert \bm{A}_f \Vert_2 \rho(\tau)  + \rho^2(\tau).
\end{align}
\end{lemma}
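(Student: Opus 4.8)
The plan is to reduce the statement to bounding a difference of matrix powers and then to expand that difference to first order in $\rho(\tau)$. First I would introduce the shorthand $\bm{G}_f = \bm{A}_f\bm{A}_f^T$ and $\bm{G}_c = \tau\bm{A}_c\bm{A}_c^T$, so that $\rho(\tau) = \Vert \bm{G}_f - \bm{G}_c \Vert_2$, $\Vert \bm{G}_f \Vert_2 = \Vert \bm{A}_f \Vert_2^2$, and $\Vert \bm{G}_c \Vert_2 = \tau \Vert \bm{A}_c \Vert_2^2$. Pulling the rightmost factor out with submultiplicativity gives $\Vert (\bm{G}_f^q - \bm{G}_c^q)\bm{A}_f \Vert_2 \leq \Vert \bm{G}_f^q - \bm{G}_c^q \Vert_2 \, \Vert \bm{A}_f \Vert_2$, so the task becomes bounding $\Vert \bm{G}_f^q - \bm{G}_c^q \Vert_2$ in terms of $\rho(\tau)$.

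Next I would apply the telescoping identity $\bm{G}_f^q - \bm{G}_c^q = \sum_{i=0}^{q-1} \bm{G}_f^i (\bm{G}_f - \bm{G}_c)\bm{G}_c^{q-1-i}$, take norms, and use submultiplicativity together with $\Vert \bm{G}_f - \bm{G}_c \Vert_2 = \rho(\tau)$ to obtain $\Vert \bm{G}_f^q - \bm{G}_c^q \Vert_2 \leq \rho(\tau) \sum_{i=0}^{q-1} \Vert \bm{G}_f \Vert_2^i \Vert \bm{G}_c \Vert_2^{q-1-i}$. The remaining work is to show that this mixed-power sum equals $q\Vert \bm{G}_c \Vert_2^{q-1}$ to leading order. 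Writing $\Vert \bm{G}_f \Vert_2 \leq \Vert \bm{G}_c \Vert_2 + \rho(\tau)$ and expanding each $\Vert \bm{G}_f \Vert_2^i$ binomially, the zeroth-order terms sum to $q\Vert \bm{G}_c \Vert_2^{q-1}$, while every term carrying at least one additional factor of $\rho(\tau)$ is collected into a remainder of order $\rho^2(\tau)$.

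Substituting $\Vert \bm{G}_c \Vert_2^{q-1} = \tau^{q-1}\Vert \bm{A}_c \Vert_2^{2q-2}$ and reinstating the $\Vert \bm{A}_f \Vert_2$ factor from the first step yields the leading term $q\tau^{q-1}\Vert \bm{A}_c \Vert_2^{2q-2}\Vert \bm{A}_f \Vert_2\,\rho(\tau)$, whereas the higher-order contributions collapse into the $\rho^2(\tau)$ term, with the $\lesssim$ absorbing the combinatorial coefficients and the stray norm factors multiplying $\rho^2(\tau)$.

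The main obstacle is this last step: separating the sum $\sum_{i=0}^{q-1} \Vert \bm{G}_f \Vert_2^i \Vert \bm{G}_c \Vert_2^{q-1-i}$ into a clean leading term plus a genuine $O(\rho^2(\tau))$ remainder. This is precisely where the estimate becomes approximate rather than exact: the argument is a first-order perturbation expansion in $\rho(\tau)$, valid in the regime where $\tau\bm{A}_c\bm{A}_c^T$ is a good approximation of $\bm{A}_f\bm{A}_f^T$, and the $\lesssim$ symbol signals that the $q$-dependent combinatorial factors and the norm prefactors on the quadratic-and-higher terms are being suppressed rather than tracked explicitly.
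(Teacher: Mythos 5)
Your proposal is correct and follows essentially the same route as the paper: both arguments set $\bm{E}=\bm{A}_f\bm{A}_f^T-\tau\bm{A}_c\bm{A}_c^T$, pull out $\Vert\bm{A}_f\Vert_2$ by submultiplicativity, and then expand the difference of $q$-th powers to first order in $\bm{E}$, collecting the higher-order terms into the $\rho^2(\tau)$ remainder absorbed by $\lesssim$. Your telescoping identity $\bm{G}_f^q-\bm{G}_c^q=\sum_{i=0}^{q-1}\bm{G}_f^i(\bm{G}_f-\bm{G}_c)\bm{G}_c^{q-1-i}$ is simply a more explicit rendering of the ``binomial theorem plus submultiplicativity'' step the paper invokes, and is arguably cleaner for noncommuting factors.
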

\begin{proof}
Let the matrix $\bm{E} =\bm{A}_f\bm{A}^T_f -  \tau\bm{A}_c\bm{A}^T_c$, with $\Vert \bm{E} \Vert_2 = \rho(\tau)$. Then,
\begin{subequations}
\begin{align}
    \Vert \left(\bm{A}_f\bm{A}_f^T\right)^q\bm{A}_f - \left(\tau\bm{A}_c\bm{A}_c^T\right)^q\bm{A}_f \Vert_2 
    &= \Vert \left(\tau\bm{A}_c\bm{A}_c^T + \bm{E} \right)^q\bm{A}_f - \left(\tau\bm{A}_c\bm{A}_c^T\right)^q\bm{A}_f \Vert_2,\\
    &\leq \Vert \left(\tau\bm{A}_c\bm{A}_c^T + \bm{E} \right)^q - \left(\tau\bm{A}_c\bm{A}_c^T\right)^q \Vert_2 \Vert \bm{A}_f \Vert_2,\\
    &\lesssim  q\tau^{q-1}  \Vert \bm{A}_c \Vert_2^{2q-2}  \Vert \bm{A}_f \Vert_2 \Vert  \bm{E} \Vert_2 + \Vert \bm{E} \Vert_2^2, \label{ineq:coarsespsvd} \\
    &= q\tau^{q-1} \Vert \bm{A}_c \Vert_2^{2q-2}  \Vert \bm{A}_f \Vert_2 \rho(\tau)+ \rho^2(\tau).
\end{align}
\end{subequations}
The asymptotic inequality (\ref{ineq:coarsespsvd}) follows from the binomial theorem and sub-multiplicativity of $\Vert \cdot \Vert_2$.
\end{proof}
\alec{The groundwork to state the desired result is now complete.} 
Following Theorem 9.3 of~\cite{halko2011finding}, let $\bm{P}_{\bm{A}^q_b}$ be a projection onto the range of $\bm{A}^q_b = (\tau\bm{A}_c\bm{A}_c^T)^q \bm{A}_f$ and \alec{let} $\bm{A}^q_f = (\bm{A}_f\bm{A}_f^T)^q \bm{A}_f$. Then,
\begin{subequations}
\begin{align}
    \Vert \bm{A}_f - \hat{\bm{A}}_f \Vert_2 
    &\leq \Vert \bm{A}_f - \bm{P}_{\bm{A}^q_b} \bm{A}_f \Vert_2 + \Vert \bm{P}_{\bm{A}^q_b} \bm{A}_f - \hat{\bm{A}}_f \Vert_2.
\end{align}
\end{subequations}
Let $\hat{\bm{A}}_{f,k}$ denote a best rank-$k$ approximation of the matrix $\bm{A}_f$. Because $\hat{\bm{A}}_f$ is the best rank-$k$ approximation to $\bm{P}_{\bm{A}^q_b} \bm{A}_f$,
\begin{subequations}
\begin{align}
    \Vert \bm{A}_f - \hat{\bm{A}}_f \Vert_2 
    &\leq \Vert \bm{P}_{\bm{A}^q_b} \bm{A}_f - \bm{P}_{\bm{A}^q_b}\hat{\bm{A}}_{f,k} \Vert_2 + \Vert (\bm{I} - \bm{P}_{\bm{A}^q_b}) \bm{A}_f \Vert_2 \label{eq:line3},\\
    &= \Vert \bm{P}_{\bm{A}^q_b}( \bm{A}_f - \hat{\bm{A}}_{f,k}) \Vert_2 + \Vert (\bm{I} - \bm{P}_{\bm{A}^q_b}) \bm{A}_f \Vert_2,\\
    &\leq \Vert  \bm{A}_f - \hat{\bm{A}}_{f,k} \Vert_2 + \Vert (\bm{I} - \bm{P}_{\bm{A}^q_b}) \bm{A}_f \Vert_2,\\
    &\leq \sigma_{f,k+1}  +  \Vert (\bm{I} - \bm{P}_{\bm{A}^q_b}) \bm{A}_f \Vert_2.
\end{align}
\end{subequations}
Now, following the proof of Theorem 9.2 from~\cite{halko2011finding}, 
\begin{subequations}
\begin{align}
    \Vert \bm{A}_f - \hat{\bm{A}}_f \Vert_2 
    &\leq \sigma_{f,k+1} +  \Vert (\bm{I} - \bm{P}_{\bm{A}^q_b})\bm{A}^q_f \Vert_2^{1/(2q+1)},\label{eq:line7}\\
    &= \sigma_{f,k+1} +  \Vert (\bm{I} - \bm{P}_{\bm{A}^q_b})(\bm{A}^q_b + \bm{E}) \Vert_2^{1/(2q+1)},\\
    &= \sigma_{f,k+1} +  \Vert( \bm{P}_{(\bm{A}^q_b)^T}  \bm{A}^q_b + \bm{P}_{(\bm{A}^q_b)^T} \bm{E}) \Vert_2^{1/(2q+1)},\\
    &\leq \sigma_{f,k+1} +  \left(\Vert \bm{P}_{(\bm{A}^q_b)^T}  \bm{A}^q_b \Vert_2 + \Vert \bm{P}_{(\bm{A}^q_b)^T} \bm{E} \Vert_2 \right)^{1/(2q+1)},\\
    &\leq \sigma_{f,k+1} +  \left(\Vert \bm{P}_{(\bm{A}^q_b)^T} \bm{A}^q_b \Vert_2 + \Vert \bm{E} \Vert_2\right)^{1/(2q+1)}, \\
    &\leq \sigma_{f,k+1} +  \Vert \bm{E} \Vert_2^{1/(2q+1)}, \\
 &\lesssim \sigma_{f,k+1} +  \left(q \tau^{q-1}\Vert \bm{A}_c \Vert_2^{2q-2} \Vert \bm{A}_f \Vert_2  \rho(\tau)  + \rho^2(\tau)\right)^{1/(2q+1)} \label{eq:finalineq}.
\end{align}
\end{subequations}
\alec{This bound indicates that the sub-optimality due to C-PWR increases as the number of power iterations $q$, the scaling parameter $\tau$, or the approximation parameter $\rho(\tau)$ increases. On the other hand, for small $\rho(\tau)$, the error is expected to be small and scale with $\rho(\tau)^{1/(2q+1)}$. }
\section{Conclusions}
\label{sec:conclusions}
This work presents pass-efficient algorithms for computing low-rank SVD and \alec{Interpolative Decomposition (ID)} matrix approximations of high dimensional spatio-temporal data matrices. The proposed algorithms use a coarsened (sketched) data matrix to compute a decomposition of fine grid (unsketched) data matrix. A first-of-its-kind single-pass power iteration algorithm is also presented.

\alec{As opposed to the randomized matrix sketching literature, this work is focused on {\it deterministic} sketches. With data assumed to come from physical simulations, properties of these datasets can be exploited to obtain a faster, more memory movement-efficient sketch with deterministic approaches. While the data-agnosticism of randomized methods is an attractive feature, the proposed sketches in this work demonstrate competitiveness with a standard randomized approach (a dense Gaussian matrix). Deterministic approaches also enable the construction of single-pass algorithms for both low-rank approximation and online error estimation for which randomized approaches were observed to be less effective.}

In the single-pass case, the sub-optimality of the SVD algorithms relative to the oracle solution depends on the error incurred by mapping the coarse grid data on the fine grid. In the ID algorithm, existing analysis from~\cite{hampton2018practical} aids in the derivation of error bounds. The single-pass algorithm SPC-ID has a bound competitive with an analogous two-pass algorithm presented in~\cite{dunton2020pass}, while an algorithm based on previous work in~\cite{hampton2018practical} enables online estimation of the approximation error. In the case of coarse grid power iteration, the sub-optimality bound depends on an intuitive relationship between the coarse grid and fine grid data.

Numerical experiments demonstrate that the proposed deterministic sketches enable near-optimal low-rank approximation of data matrices via the singular value decomposition and interpolative decomposition. The proposed coarse-grid power iteration scheme C-PWR reduces approximation error with minimal trade-offs in runtime and importantly, no additional passes over the \alec{high-dimensional} input matrix. 
Combining these low-rank methods with three state-of-the-art compression algorithms FPZIP, ZFP, and SZ on a pressure dataset from the Johns Hopkins University Turbulence Databases enables spatio-temporal compression. 

Extending the algorithms in this work to other low-rank approximations of data matrices, including but not limited to, CUR and Cholesky decompositions, is a straightforward augmentation to this work. In addition, tighter error bounds on coarse grid power iteration are an existing gap not just in this work, but the power iteration literature in general. Finally, generalizing the proposed methods to high-order tensor decomposition algorithms would be a natural next step.


\section*{Acknowledgements}
The authors would like to thank Ryan Skinner for the NACA airfoil dataset, and Llu\'is Jofre for the turbulent channel flow dataset. 
This work was funded by the United States Department of Energy's National Nuclear Security Administration under the Predictive Science Academic Alliance Program (PSAAP) II at Stanford University, Grant DE-NA-0002373. \alec{The work of AD was also supported by the AFOSR grant FA9550-20-1-0138.}

\bibliography{Ref}

\begin{thebibliography}{47}
\expandafter\ifx\csname natexlab\endcsname\relax\def\natexlab#1{#1}\fi
\providecommand{\bibinfo}[2]{#2}
\ifx\xfnm\relax \def\xfnm[#1]{\unskip,\space#1}\fi
\bibitem[{Dunton et~al.(2020)Dunton, Jofre, Iaccarino, and
  Doostan}]{dunton2020pass}
\bibinfo{author}{A.~M. Dunton}, \bibinfo{author}{L.~Jofre},
  \bibinfo{author}{G.~Iaccarino}, \bibinfo{author}{A.~Doostan},
\newblock \bibinfo{title}{Pass-efficient methods for compression of
  high-dimensional turbulent flow data},
\newblock \bibinfo{journal}{Journal of Computational Physics}
  \bibinfo{volume}{423} (\bibinfo{year}{2020}) \bibinfo{pages}{109704}.
\bibitem[{Eckart and Young(1936)}]{eckart1936approximation}
\bibinfo{author}{C.~Eckart}, \bibinfo{author}{G.~Young},
\newblock \bibinfo{title}{The approximation of one matrix by another of lower
  rank},
\newblock \bibinfo{journal}{Psychometrika} \bibinfo{volume}{1}
  (\bibinfo{year}{1936}) \bibinfo{pages}{211--218}.
\bibitem[{Cheng et~al.(2005)Cheng, Gimbutas, Martinsson, and
  Rokhlin}]{cheng2005compression}
\bibinfo{author}{H.~Cheng}, \bibinfo{author}{Z.~Gimbutas},
  \bibinfo{author}{P.-G. Martinsson}, \bibinfo{author}{V.~Rokhlin},
\newblock \bibinfo{title}{On the compression of low rank matrices},
\newblock \bibinfo{journal}{SIAM Journal on Scientific Computing}
  \bibinfo{volume}{26} (\bibinfo{year}{2005}) \bibinfo{pages}{1389--1404}.
\bibitem[{Papadimitriou et~al.(2000)Papadimitriou, Raghavan, Tamaki, and
  Vempala}]{papadimitriou2000latent}
\bibinfo{author}{C.~H. Papadimitriou}, \bibinfo{author}{P.~Raghavan},
  \bibinfo{author}{H.~Tamaki}, \bibinfo{author}{S.~Vempala},
\newblock \bibinfo{title}{Latent semantic indexing: A probabilistic analysis},
\newblock \bibinfo{journal}{Journal of Computer and System Sciences}
  \bibinfo{volume}{61} (\bibinfo{year}{2000}) \bibinfo{pages}{217--235}.
\bibitem[{Achlioptas and McSherry(2007)}]{achlioptas2007fast}
\bibinfo{author}{D.~Achlioptas}, \bibinfo{author}{F.~McSherry},
\newblock \bibinfo{title}{Fast computation of low-rank matrix approximations},
\newblock \bibinfo{journal}{Journal of the ACM (JACM)} \bibinfo{volume}{54}
  (\bibinfo{year}{2007}) \bibinfo{pages}{9}.
\bibitem[{Arora et~al.(2006)Arora, Hazan, and Kale}]{arora2006fast}
\bibinfo{author}{S.~Arora}, \bibinfo{author}{E.~Hazan},
  \bibinfo{author}{S.~Kale},
\newblock \bibinfo{title}{A fast random sampling algorithm for sparsifying
  matrices},
\newblock in: \bibinfo{booktitle}{Approximation, Randomization, and
  Combinatorial Optimization. Algorithms and Techniques},
  \bibinfo{publisher}{Springer}, \bibinfo{year}{2006}, pp.
  \bibinfo{pages}{272--279}.
\bibitem[{Gittens and Tropp(2009)}]{gittens2009error}
\bibinfo{author}{A.~Gittens}, \bibinfo{author}{J.~A. Tropp},
\newblock \bibinfo{title}{Error bounds for random matrix approximation
  schemes},
\newblock \bibinfo{journal}{arXiv preprint arXiv:0911.4108}
  (\bibinfo{year}{2009}).
\bibitem[{Spielman and Srivastava(2011)}]{spielman2011graph}
\bibinfo{author}{D.~A. Spielman}, \bibinfo{author}{N.~Srivastava},
\newblock \bibinfo{title}{Graph sparsification by effective resistances},
\newblock \bibinfo{journal}{SIAM Journal on Computing} \bibinfo{volume}{40}
  (\bibinfo{year}{2011}) \bibinfo{pages}{1913--1926}.
\bibitem[{Woolfe et~al.(2008)Woolfe, Liberty, Rokhlin, and
  Tygert}]{woolfe2008fast}
\bibinfo{author}{F.~Woolfe}, \bibinfo{author}{E.~Liberty},
  \bibinfo{author}{V.~Rokhlin}, \bibinfo{author}{M.~Tygert},
\newblock \bibinfo{title}{A fast randomized algorithm for the approximation of
  matrices},
\newblock \bibinfo{journal}{Applied and Computational Harmonic Analysis}
  \bibinfo{volume}{25} (\bibinfo{year}{2008}) \bibinfo{pages}{335--366}.
\bibitem[{Tropp(2011)}]{tropp2011improved}
\bibinfo{author}{J.~A. Tropp},
\newblock \bibinfo{title}{Improved analysis of the subsampled randomized
  hadamard transform},
\newblock \bibinfo{journal}{Advances in Adaptive Data Analysis}
  \bibinfo{volume}{3} (\bibinfo{year}{2011}) \bibinfo{pages}{115--126}.
\bibitem[{Gu(2015)}]{gu2015subspace}
\bibinfo{author}{M.~Gu},
\newblock \bibinfo{title}{Subspace iteration randomization and singular value
  problems},
\newblock \bibinfo{journal}{SIAM Journal on Scientific Computing}
  \bibinfo{volume}{37} (\bibinfo{year}{2015}) \bibinfo{pages}{A1139--A1173}.
\bibitem[{Liberty et~al.(2007)Liberty, Woolfe, Martinsson, Rokhlin, and
  Tygert}]{liberty2007randomized}
\bibinfo{author}{E.~Liberty}, \bibinfo{author}{F.~Woolfe},
  \bibinfo{author}{P.-G. Martinsson}, \bibinfo{author}{V.~Rokhlin},
  \bibinfo{author}{M.~Tygert},
\newblock \bibinfo{title}{Randomized algorithms for the low-rank approximation
  of matrices},
\newblock \bibinfo{journal}{Proceedings of the National Academy of Sciences}
  \bibinfo{volume}{104} (\bibinfo{year}{2007}) \bibinfo{pages}{20167--20172}.
\bibitem[{Dasgupta and Gupta(2003)}]{dasgupta2003elementary}
\bibinfo{author}{S.~Dasgupta}, \bibinfo{author}{A.~Gupta},
\newblock \bibinfo{title}{An elementary proof of a theorem of johnson and
  lindenstrauss},
\newblock \bibinfo{journal}{Random Structures \& Algorithms}
  \bibinfo{volume}{22} (\bibinfo{year}{2003}) \bibinfo{pages}{60--65}.
\bibitem[{Halko et~al.(2011)Halko, Martinsson, and Tropp}]{halko2011finding}
\bibinfo{author}{N.~Halko}, \bibinfo{author}{P.-G. Martinsson},
  \bibinfo{author}{J.~A. Tropp},
\newblock \bibinfo{title}{Finding structure with randomness: Probabilistic
  algorithms for constructing approximate matrix decompositions},
\newblock \bibinfo{journal}{SIAM review} \bibinfo{volume}{53}
  (\bibinfo{year}{2011}) \bibinfo{pages}{217--288}.
\bibitem[{Ailon and Chazelle(2009)}]{ailon2009fast}
\bibinfo{author}{N.~Ailon}, \bibinfo{author}{B.~Chazelle},
\newblock \bibinfo{title}{The fast johnson--lindenstrauss transform and
  approximate nearest neighbors},
\newblock \bibinfo{journal}{SIAM Journal on computing} \bibinfo{volume}{39}
  (\bibinfo{year}{2009}) \bibinfo{pages}{302--322}.
\bibitem[{Cannings and Samworth(2015)}]{cannings2015random}
\bibinfo{author}{T.~I. Cannings}, \bibinfo{author}{R.~J. Samworth},
\newblock \bibinfo{title}{Random-projection ensemble classification},
\newblock \bibinfo{journal}{arXiv preprint arXiv:1504.04595}
  (\bibinfo{year}{2015}).
\bibitem[{Johnson and Lindenstrauss(1984)}]{johnson1984extensions}
\bibinfo{author}{W.~B. Johnson}, \bibinfo{author}{J.~Lindenstrauss},
\newblock \bibinfo{title}{Extensions of lipschitz mappings into a hilbert
  space},
\newblock \bibinfo{journal}{Contemporary mathematics} \bibinfo{volume}{26}
  (\bibinfo{year}{1984}) \bibinfo{pages}{1}.
\bibitem[{Shitov(2017)}]{shitov2017column}
\bibinfo{author}{Y.~Shitov},
\newblock \bibinfo{title}{Column subset selection is np-complete},
\newblock \bibinfo{journal}{arXiv preprint arXiv:1701.02764}
  (\bibinfo{year}{2017}).
\bibitem[{Frieze et~al.(2004)Frieze, Kannan, and Vempala}]{frieze2004fast}
\bibinfo{author}{A.~Frieze}, \bibinfo{author}{R.~Kannan},
  \bibinfo{author}{S.~Vempala},
\newblock \bibinfo{title}{Fast monte-carlo algorithms for finding low-rank
  approximations},
\newblock \bibinfo{journal}{Journal of the ACM (JACM)} \bibinfo{volume}{51}
  (\bibinfo{year}{2004}) \bibinfo{pages}{1025--1041}.
\bibitem[{Drineas et~al.(2004)Drineas, Frieze, Kannan, Vempala, and
  Vinay}]{drineas2004clustering}
\bibinfo{author}{P.~Drineas}, \bibinfo{author}{A.~Frieze},
  \bibinfo{author}{R.~Kannan}, \bibinfo{author}{S.~Vempala},
  \bibinfo{author}{V.~Vinay},
\newblock \bibinfo{title}{Clustering large graphs via the singular value
  decomposition},
\newblock \bibinfo{journal}{Machine learning} \bibinfo{volume}{56}
  (\bibinfo{year}{2004}) \bibinfo{pages}{9--33}.
\bibitem[{Drineas et~al.(2006)Drineas, Kannan, and Mahoney}]{drineas2006fast}
\bibinfo{author}{P.~Drineas}, \bibinfo{author}{R.~Kannan},
  \bibinfo{author}{M.~W. Mahoney},
\newblock \bibinfo{title}{Fast monte carlo algorithms for matrices ii:
  Computing a low-rank approximation to a matrix},
\newblock \bibinfo{journal}{SIAM Journal on computing} \bibinfo{volume}{36}
  (\bibinfo{year}{2006}) \bibinfo{pages}{158--183}.
\bibitem[{Liberty(2013)}]{liberty2013simple}
\bibinfo{author}{E.~Liberty},
\newblock \bibinfo{title}{Simple and deterministic matrix sketching},
\newblock in: \bibinfo{booktitle}{Proceedings of the 19th ACM SIGKDD
  international conference on Knowledge discovery and data mining},
  \bibinfo{organization}{ACM}, pp. \bibinfo{pages}{581--588}.
\bibitem[{Ghashami et~al.(2016)Ghashami, Liberty, Phillips, and
  Woodruff}]{ghashami2016frequent}
\bibinfo{author}{M.~Ghashami}, \bibinfo{author}{E.~Liberty},
  \bibinfo{author}{J.~M. Phillips}, \bibinfo{author}{D.~P. Woodruff},
\newblock \bibinfo{title}{Frequent directions: Simple and deterministic matrix
  sketching},
\newblock \bibinfo{journal}{SIAM Journal on Computing} \bibinfo{volume}{45}
  (\bibinfo{year}{2016}) \bibinfo{pages}{1762--1792}.
\bibitem[{Clarkson and Woodruff(2009)}]{clarkson2009numerical}
\bibinfo{author}{K.~L. Clarkson}, \bibinfo{author}{D.~P. Woodruff},
\newblock \bibinfo{title}{Numerical linear algebra in the streaming model},
\newblock in: \bibinfo{booktitle}{Proceedings of the forty-first annual ACM
  symposium on Theory of computing}, \bibinfo{organization}{ACM}, pp.
  \bibinfo{pages}{205--214}.
\bibitem[{Woodruff et~al.(2014)}]{woodruff2014sketching}
\bibinfo{author}{D.~P. Woodruff}, et~al.,
\newblock \bibinfo{title}{Sketching as a tool for numerical linear algebra},
\newblock \bibinfo{journal}{Foundations and Trends{\textregistered} in
  Theoretical Computer Science} \bibinfo{volume}{10} (\bibinfo{year}{2014})
  \bibinfo{pages}{1--157}.
\bibitem[{Boutsidis et~al.(2016)Boutsidis, Woodruff, and
  Zhong}]{boutsidis2016optimal}
\bibinfo{author}{C.~Boutsidis}, \bibinfo{author}{D.~P. Woodruff},
  \bibinfo{author}{P.~Zhong},
\newblock \bibinfo{title}{Optimal principal component analysis in distributed
  and streaming models},
\newblock in: \bibinfo{booktitle}{Proceedings of the forty-eighth annual ACM
  symposium on Theory of Computing}, \bibinfo{organization}{ACM}, pp.
  \bibinfo{pages}{236--249}.
\bibitem[{Upadhyay(2016)}]{upadhyay2016fast}
\bibinfo{author}{J.~Upadhyay},
\newblock \bibinfo{title}{Fast and space-optimal low-rank factorization in the
  streaming model with application in differential privacy},
\newblock \bibinfo{journal}{arXiv preprint arXiv:1604.01429}
  (\bibinfo{year}{2016}).
\bibitem[{Tropp et~al.(2017)Tropp, Yurtsever, Udell, and
  Cevher}]{tropp2017practical}
\bibinfo{author}{J.~A. Tropp}, \bibinfo{author}{A.~Yurtsever},
  \bibinfo{author}{M.~Udell}, \bibinfo{author}{V.~Cevher},
\newblock \bibinfo{title}{Practical sketching algorithms for low-rank matrix
  approximation},
\newblock \bibinfo{journal}{SIAM Journal on Matrix Analysis and Applications}
  \bibinfo{volume}{38} (\bibinfo{year}{2017}) \bibinfo{pages}{1454--1485}.
\bibitem[{Yu et~al.(2017)Yu, Gu, Li, Liu, and Li}]{yu2017single}
\bibinfo{author}{W.~Yu}, \bibinfo{author}{Y.~Gu}, \bibinfo{author}{J.~Li},
  \bibinfo{author}{S.~Liu}, \bibinfo{author}{Y.~Li},
\newblock \bibinfo{title}{Single-pass pca of large high-dimensional data},
\newblock \bibinfo{journal}{arXiv preprint arXiv:1704.07669}
  (\bibinfo{year}{2017}).
\bibitem[{Lindstrom(2014)}]{lindstrom2014fixed}
\bibinfo{author}{P.~Lindstrom},
\newblock \bibinfo{title}{Fixed-rate compressed floating-point arrays},
\newblock \bibinfo{journal}{IEEE transactions on visualization and computer
  graphics} \bibinfo{volume}{20} (\bibinfo{year}{2014})
  \bibinfo{pages}{2674--2683}.
\bibitem[{Lindstrom and Isenburg(2006)}]{lindstrom2006fast}
\bibinfo{author}{P.~Lindstrom}, \bibinfo{author}{M.~Isenburg},
\newblock \bibinfo{title}{Fast and efficient compression of floating-point
  data},
\newblock \bibinfo{journal}{IEEE transactions on visualization and computer
  graphics} \bibinfo{volume}{12} (\bibinfo{year}{2006})
  \bibinfo{pages}{1245--1250}.
\bibitem[{Di and Cappello(2016)}]{di2016fast}
\bibinfo{author}{S.~Di}, \bibinfo{author}{F.~Cappello},
\newblock \bibinfo{title}{Fast error-bounded lossy hpc data compression with
  sz},
\newblock in: \bibinfo{booktitle}{2016 ieee international parallel and
  distributed processing symposium (ipdps)}, \bibinfo{organization}{IEEE}, pp.
  \bibinfo{pages}{730--739}.
\bibitem[{Tao et~al.(2017)Tao, Di, Chen, and Cappello}]{tao2017significantly}
\bibinfo{author}{D.~Tao}, \bibinfo{author}{S.~Di}, \bibinfo{author}{Z.~Chen},
  \bibinfo{author}{F.~Cappello},
\newblock \bibinfo{title}{Significantly improving lossy compression for
  scientific data sets based on multidimensional prediction and
  error-controlled quantization},
\newblock in: \bibinfo{booktitle}{2017 IEEE International Parallel and
  Distributed Processing Symposium (IPDPS)}, \bibinfo{organization}{IEEE}, pp.
  \bibinfo{pages}{1129--1139}.
\bibitem[{Liang et~al.(2018)Liang, Di, Tao, Li, Li, Guo, Chen, and
  Cappello}]{liang2018error}
\bibinfo{author}{X.~Liang}, \bibinfo{author}{S.~Di}, \bibinfo{author}{D.~Tao},
  \bibinfo{author}{S.~Li}, \bibinfo{author}{S.~Li}, \bibinfo{author}{H.~Guo},
  \bibinfo{author}{Z.~Chen}, \bibinfo{author}{F.~Cappello},
\newblock \bibinfo{title}{Error-controlled lossy compression optimized for high
  compression ratios of scientific datasets},
\newblock in: \bibinfo{booktitle}{2018 IEEE International Conference on Big
  Data (Big Data)}, \bibinfo{organization}{IEEE}, pp.
  \bibinfo{pages}{438--447}.
\bibitem[{Golub and Van~Loan(2012)}]{golub2012matrix}
\bibinfo{author}{G.~H. Golub}, \bibinfo{author}{C.~F. Van~Loan},
  \bibinfo{title}{Matrix computations}, volume~\bibinfo{volume}{3},
  \bibinfo{publisher}{JHU press}, \bibinfo{year}{2012}.
\bibitem[{Hampton et~al.(2018)Hampton, Fairbanks, Narayan, and
  Doostan}]{hampton2018practical}
\bibinfo{author}{J.~Hampton}, \bibinfo{author}{H.~R. Fairbanks},
  \bibinfo{author}{A.~Narayan}, \bibinfo{author}{A.~Doostan},
\newblock \bibinfo{title}{Practical error bounds for a non-intrusive
  bi-fidelity approach to parametric/stochastic model reduction},
\newblock \bibinfo{journal}{Journal of Computational Physics}
  \bibinfo{volume}{368} (\bibinfo{year}{2018}) \bibinfo{pages}{315--332}.
\bibitem[{Hansen(1987)}]{hansen1987truncatedsvd}
\bibinfo{author}{P.~C. Hansen},
\newblock \bibinfo{title}{The truncatedsvd as a method for regularization},
\newblock \bibinfo{journal}{BIT Numerical Mathematics} \bibinfo{volume}{27}
  (\bibinfo{year}{1987}) \bibinfo{pages}{534--553}.
\bibitem[{Martinsson et~al.(2010)Martinsson, Szlam, Tygert
  et~al.}]{martinsson2010normalized}
\bibinfo{author}{P.-G. Martinsson}, \bibinfo{author}{A.~Szlam},
  \bibinfo{author}{M.~Tygert}, et~al.,
\newblock \bibinfo{title}{Normalized power iterations for the computation of
  svd},
\newblock \bibinfo{journal}{Manuscript., Nov}  (\bibinfo{year}{2010}).
\bibitem[{Skinner et~al.(2019)Skinner, Doostan, Peters, Evans, and
  Jansen}]{skinner2019reduced}
\bibinfo{author}{R.~W. Skinner}, \bibinfo{author}{A.~Doostan},
  \bibinfo{author}{E.~L. Peters}, \bibinfo{author}{J.~A. Evans},
  \bibinfo{author}{K.~E. Jansen},
\newblock \bibinfo{title}{Reduced-basis multifidelity approach for efficient
  parametric study of naca airfoils},
\newblock \bibinfo{journal}{AIAA Journal} \bibinfo{volume}{57}
  (\bibinfo{year}{2019}) \bibinfo{pages}{1481--1491}.
\bibitem[{Spalart and Allmaras(1992)}]{spalart1992one}
\bibinfo{author}{P.~Spalart}, \bibinfo{author}{S.~Allmaras},
\newblock \bibinfo{title}{A one-equation turbulence model for aerodynamic
  flows},
\newblock in: \bibinfo{booktitle}{30th aerospace sciences meeting and exhibit},
  p. \bibinfo{pages}{439}.
\bibitem[{Skinner et~al.(2017)Skinner, Doostan, Peters, Evans, and
  Jansen}]{skinner2017evaluation}
\bibinfo{author}{R.~Skinner}, \bibinfo{author}{A.~Doostan},
  \bibinfo{author}{E.~Peters}, \bibinfo{author}{J.~Evans},
  \bibinfo{author}{K.~E. Jansen},
\newblock \bibinfo{title}{An evaluation of multi-fidelity modeling efficiency
  on a parametric study of naca airfoils},
\newblock in: \bibinfo{booktitle}{35th AIAA Applied Aerodynamics Conference},
  p. \bibinfo{pages}{3260}.
\bibitem[{Esmaily et~al.(2018)Esmaily, Jofre, Mani, and
  Iaccarino}]{esmaily2018scalable}
\bibinfo{author}{M.~Esmaily}, \bibinfo{author}{L.~Jofre},
  \bibinfo{author}{A.~Mani}, \bibinfo{author}{G.~Iaccarino},
\newblock \bibinfo{title}{A scalable geometric multigrid solver for
  nonsymmetric elliptic systems with application to variable-density flows},
\newblock \bibinfo{journal}{Journal of Computational Physics}
  \bibinfo{volume}{357} (\bibinfo{year}{2018}) \bibinfo{pages}{142--158}.
\bibitem[{Moser et~al.(1999)Moser, Kim, and Mansour}]{moser1999direct}
\bibinfo{author}{R.~D. Moser}, \bibinfo{author}{J.~Kim}, \bibinfo{author}{N.~N.
  Mansour},
\newblock \bibinfo{title}{Direct numerical simulation of turbulent channel flow
  up to re $\tau$= 590},
\newblock \bibinfo{journal}{Physics of fluids} \bibinfo{volume}{11}
  (\bibinfo{year}{1999}) \bibinfo{pages}{943--945}.
\bibitem[{Martinsson(2019)}]{martinsson2019randomized}
\bibinfo{author}{P.-G. Martinsson},
\newblock \bibinfo{title}{Randomized methods for matrix computations},
\newblock \bibinfo{journal}{The Mathematics of Data} \bibinfo{volume}{25}
  (\bibinfo{year}{2019}) \bibinfo{pages}{187--231}.
\bibitem[{Li et~al.(2018)Li, Marsaglia, Garth, Woodring, Clyne, and
  Childs}]{li2018data}
\bibinfo{author}{S.~Li}, \bibinfo{author}{N.~Marsaglia},
  \bibinfo{author}{C.~Garth}, \bibinfo{author}{J.~Woodring},
  \bibinfo{author}{J.~Clyne}, \bibinfo{author}{H.~Childs},
\newblock \bibinfo{title}{Data reduction techniques for simulation,
  visualization and data analysis},
\newblock in: \bibinfo{booktitle}{Computer Graphics Forum},
  volume~\bibinfo{volume}{37}, \bibinfo{organization}{Wiley Online Library},
  pp. \bibinfo{pages}{422--447}.
\bibitem[{Perlman et~al.(2007)Perlman, Burns, Li, and
  Meneveau}]{perlman2007data}
\bibinfo{author}{E.~Perlman}, \bibinfo{author}{R.~Burns},
  \bibinfo{author}{Y.~Li}, \bibinfo{author}{C.~Meneveau},
\newblock \bibinfo{title}{Data exploration of turbulence simulations using a
  database cluster},
\newblock in: \bibinfo{booktitle}{Proceedings of the 2007 ACM/IEEE conference
  on Supercomputing}, \bibinfo{organization}{ACM}, p.~\bibinfo{pages}{23}.
\bibitem[{Li et~al.(2008)Li, Perlman, Wan, Yang, Meneveau, Burns, Chen, Szalay,
  and Eyink}]{li2008public}
\bibinfo{author}{Y.~Li}, \bibinfo{author}{E.~Perlman},
  \bibinfo{author}{M.~Wan}, \bibinfo{author}{Y.~Yang},
  \bibinfo{author}{C.~Meneveau}, \bibinfo{author}{R.~Burns},
  \bibinfo{author}{S.~Chen}, \bibinfo{author}{A.~Szalay},
  \bibinfo{author}{G.~Eyink},
\newblock \bibinfo{title}{A public turbulence database cluster and applications
  to study lagrangian evolution of velocity increments in turbulence},
\newblock \bibinfo{journal}{Journal of Turbulence}  (\bibinfo{year}{2008})
  \bibinfo{pages}{N31}.

\end{thebibliography}
\bibliographystyle{model1-num-names}
\biboptions{sort&compress}

\end{document}